\definecolor{cof}{RGB}{219,144,71}
\definecolor{pur}{RGB}{186,146,162}
\definecolor{greeo}{RGB}{91,173,69}
\definecolor{greet}{RGB}{52,111,72}
\begin{document}

\newtheorem{theorem}{Theorem}[section]
\newtheorem{corollary}[theorem]{Corollary}
\newtheorem{proposition}[theorem]{Proposition}
\newtheorem{lemma}[theorem]{Lemma}

\theoremstyle{definition}
\newtheorem{example}[theorem]{Example}

\newcommand{\FFock}{\mathcal{F}}
\newcommand{\kil}{\mathsf{k}}
\newcommand{\Hil}{\mathsf{H}}
\newcommand{\hil}{\mathsf{h}}
\newcommand{\Kil}{\mathsf{K}}
\newcommand{\Real}{\mathbb{R}}
\newcommand{\Rplus}{\Real_+}

\newcommand{\bC}{{\mathbb{C}}}
\newcommand{\bD}{{\mathbb{D}}}
\newcommand{\bN}{{\mathbb{N}}}
\newcommand{\bQ}{{\mathbb{Q}}}
\newcommand{\bR}{{\mathbb{R}}}
\newcommand{\bT}{{\mathbb{T}}}
\newcommand{\bX}{{\mathbb{X}}}
\newcommand{\bZ}{{\mathbb{Z}}}
\newcommand{\bH}{{\mathbb{H}}}
\newcommand{\BH}{{\B(\H)}}
\newcommand{\bsl}{\setminus}
\newcommand{\ca}{\mathrm{C}^*}
\newcommand{\cstar}{\mathrm{C}^*}
\newcommand{\cenv}{\mathrm{C}^*_{\text{env}}}
\newcommand{\rip}{\rangle}
\newcommand{\ol}{\overline}
\newcommand{\td}{\widetilde}
\newcommand{\wh}{\widehat}
\newcommand{\sot}{\textsc{sot}}
\newcommand{\wot}{\textsc{wot}}
\newcommand{\wotclos}[1]{\ol{#1}^{\textsc{wot}}}
 \newcommand{\A}{{\mathcal{A}}}
 \newcommand{\B}{{\mathcal{B}}}
 \newcommand{\C}{{\mathcal{C}}}
 \newcommand{\D}{{\mathcal{D}}}
 \newcommand{\E}{{\mathcal{E}}}
 \newcommand{\F}{{\mathcal{F}}}
 \newcommand{\G}{{\mathcal{G}}}
\renewcommand{\H}{{\mathcal{H}}}
 \newcommand{\I}{{\mathcal{I}}}
 \newcommand{\J}{{\mathcal{J}}}
 \newcommand{\K}{{\mathcal{K}}}
\renewcommand{\L}{{\mathcal{L}}}
 \newcommand{\M}{{\mathcal{M}}}
 \newcommand{\N}{{\mathcal{N}}}
\renewcommand{\O}{{\mathcal{O}}}
\renewcommand{\P}{{\mathcal{P}}}
 \newcommand{\Q}{{\mathcal{Q}}}
 \newcommand{\R}{{\mathcal{R}}}
\renewcommand{\S}{{\mathcal{S}}}
 \newcommand{\T}{{\mathcal{T}}}
 \newcommand{\U}{{\mathcal{U}}}
 \newcommand{\V}{{\mathcal{V}}}
 \newcommand{\W}{{\mathcal{W}}}
 \newcommand{\X}{{\mathcal{X}}}
 \newcommand{\Y}{{\mathcal{Y}}}
 \newcommand{\Z}{{\mathcal{Z}}}

\newcommand{\supp}{\operatorname{supp}}
\newcommand{\conv}{\operatorname{conv}}
\newcommand{\cone}{\operatorname{cone}}
\newcommand{\vspan}{\operatorname{span}}
\newcommand{\proj}{\operatorname{proj}}
\newcommand{\sgn}{\operatorname{sgn}}
\newcommand{\rank}{\operatorname{rank}}
\newcommand{\Isom}{\operatorname{Isom}}
\newcommand{\qIsom}{\operatorname{q-Isom}}
\newcommand{\Cknet}{{\mathcal{C}_{\text{knet}}}}
\newcommand{\Ckag}{{\mathcal{C}_{\text{kag}}}}
\newcommand{\rind}{\operatorname{r-ind}}
\newcommand{\lind}{\operatorname{r-ind}}

\setcounter{tocdepth}{1}

 \title[Finite and infinitesimal rigidity with polyhedral norms]{Finite and infinitesimal rigidity with polyhedral norms}

\author[D. Kitson]{D. Kitson}
\thanks{Supported by EPSRC grant  EP/J008648/1.}
\address{Dept.\ Math.\ Stats.\\ Lancaster University\\
Lancaster LA1 4YF \\U.K. }
\email{d.kitson@lancaster.ac.uk}

\subjclass[2010]{52C25, 52A21, 52B12}
\keywords{Bar-joint framework, infinitesimally rigid, Laman's theorem, polyhedral norm}

\begin{abstract}
We characterise finite and infinitesimal rigidity for bar-joint frameworks in $\bR^d$ with respect to polyhedral norms (i.e. norms with closed unit ball $\P$ a convex $d$-dimensional polytope). 
Infinitesimal and continuous rigidity are shown to be equivalent for finite frameworks in $\bR^d$ which are well-positioned with respect to $\P$.
An edge-labelling determined by the facets of the unit ball and placement of the framework is used to characterise infinitesimal rigidity in $\bR^d$ in terms of monochrome spanning trees.
An analogue of Laman's theorem is obtained  for all polyhedral norms on $\bR^2$.

\end{abstract}

\maketitle
\tableofcontents

\section*{Introduction}
A bar-joint framework in $\bR^d$ is a pair $(G,p)$ consisting of a simple undirected graph $G=(V(G),E(G))$ (i.e. no loops or multiple edges) and a placement $p:V(G)\to \bR^d$ of the vertices such that $p_v$ and $p_w$ are distinct whenever $vw$ is an edge of $G$.
Given a norm on $\bR^d$ we are interested in determining when a given framework can be continuously and nontrivially deformed without altering the lengths of the bars.
A well-developed rigidity theory exists in the Euclidean setting for finite bar-joint frameworks (and their variants) which stems from classical results of A. Cauchy \cite{cau}, J. C. Maxwell \cite{max},  A. D. Alexandrov  \cite{alex} and G. Laman \cite{Lam}.  
Of particular relevance is Laman's landmark characterisation for generic minimally infinitesimally rigid finite bar-joint frameworks in the Euclidean plane. Asimow and Roth proved the equivalence of finite and infinitesimal rigidity for regular bar-joint frameworks in two key papers \cite{asi-rot}, \cite{asi-rot-2}. A modern treatment can be found in works of Graver, Servatius and Servatius \cite{gra-ser-ser} and Whiteley \cite{whi-1}, \cite{whi-2}. More recently,  significant  progress has been made in topics such as global rigidity (\cite{con-1}, \cite{gor-hea-thu},  \cite{jac-jor}) and the rigidity of periodic frameworks (\cite{bor-str}, \cite{mal-the}, \cite{pow-poly}, \cite{ros-kavli})  in addition to newly emerging themes such as symmetric frameworks \cite{schulze} and frameworks supported on surfaces \cite{NOP}. 
In this article we consider rigidity properties of both finite and infinite bar-joint frameworks $(G,p)$ in $\mathbb{R}^d$ with respect to polyhedral norms. A norm on $\bR^d$ is polyhedral (or a block norm) if the closed unit ball $\{x\in \bR^d:\|x\|\leq1\}$ is the convex hull of a finite set of points.
Such norms are important from a number of perspectives.
Firstly, every norm on $\bR^d$ may be approximated by a polyhedral norm. Secondly, 
polyhedral norms are used in diverse areas of mathematical modelling. 
Thirdly, the rigidity theory obtained with polyhedral norms is distinctly different to the Euclidean setting in admitting edge-labelling and spanning tree methods. A study of rigidity with respect to
the classical non-Euclidean $\ell^p$ norms was initiated
in \cite{kit-pow}  for finite bar-joint frameworks and further developed for infinite bar-joint frameworks in  \cite{kit-pow-2}. Among these norms the $\ell^1$ and $\ell^\infty$ norms are simple examples of polyhedral norms and so the results obtained here extend some of the results of \cite{kit-pow}. 

In Section \ref{Preliminaries} we provide the relevant background material on polyhedral norms and finite and infinitesimal rigidity.
In Section \ref{InfFlexes} we establish the role of support functionals in determining the space of infinitesimal flexes of a bar-joint framework (Theorem \ref{flex1}). We then distinguish between general bar-joint frameworks and those which are well-positioned with respect to the unit ball. 
The well-positioned placements of a finite graph are open and dense in the set of all placements and we show that  finite and infinitesimal rigidity are equivalent for these bar-joint frameworks (Theorem \ref{t:rigiditythm2}).  We then introduce the rigidity matrix for a general finite bar-joint framework, the non-zero entries of which are derived from extreme points of the polar set of the unit ball.
In Section \ref{Edge} we apply an edge-labelling to $G$ which is induced by the placement of each bar in $\bR^d$ relative to the facets of the unit ball. With this edge-labelling we  identify necessary conditions for infinitesimal rigidity and obtain a sufficient condition for a subframework to be relatively infinitesimally rigid (Proposition \ref{RelRigid}). We then characterise the infinitesimally rigid bar-joint frameworks with $d$ induced framework colours as those which contain monochrome spanning trees of each framework colour (Theorem \ref{RigThm}).
This result holds for both finite and infinite bar-joint frameworks and does not require the framework to be well-positioned. 
For minimal infinitesimal rigidity we must assume that the bar-joint framework is well-positioned and an example is provided to demonstrate this.
In Section \ref{Sparsity} we apply the spanning tree characterisation to show that certain graph moves preserve minimal infinitesimal rigidity for any polyhedral norm on $\bR^2$. 
We then show that in two dimensions a finite graph has a well-positioned  minimally infinitesimally rigid placement if and only if it satisfies the counting conditions $|E(G)|=2|V(G)|-2$ and $|E(H)|\leq 2|V(H)|-2$ for all subgraphs $H$ (Theorem \ref{Laman}).  This is an analogue of  Laman's theorem \cite{Lam} which characterises the finite graphs with minimally infinitesimally rigid generic placements in the  Euclidean plane as those which satisfy the counting conditions $|E(G)|=2|V(G)|-3$ and $|E(H)|\leq 2|V(H)|-3$ for subgraphs $H$ with at least two vertices.
Many of the results obtained hold equally well for both finite and infinite bar-joint frameworks. We conclude in Section \ref{Infinite} with a discussion of some aspects which are unique to the infinite case. Illustrative examples are provided throughout.


\section{Preliminaries}
\label{Preliminaries}
Let $\P$ be a convex symmetric $d$-dimensional  polytope in $\bR^d$ where $d\geq 2$.
Following \cite{grun} we say that a proper face of $\P$ is a subset of the form 
$\P\cap H$ where $H$ is a supporting hyperplane for $\P$.
A facet of $\P$ is a proper face which is maximal with respect to inclusion.
The set of extreme points (vertices) of $\P$ is denote $ext(\P)$.
The polar set of $\P$ is denoted $\P^\triangle$ and is also a convex symmetric $d$-dimensional polytope in $\bR^d$, 
\begin{eqnarray}
\label{PolarSet}
\P^\triangle = \{y\in\bR^d: x\cdot y\leq 1,\,\,\forall\,\, x\in\P\}
\end{eqnarray}
Moreover, there exists a bijective map which assigns to each facet $F$ of $\P$ a unique extreme point  $\hat{F}$ of $\P^\triangle$ such that
\begin{eqnarray}
\label{Facet}
F=\{x\in \P: x \cdot \hat{F}=1\}
\end{eqnarray}
The polar set of $\P^\triangle$ is $\P$. 

The Minkowski functional (or gauge) for $\P$ defines a norm on $\bR^d$,
\begin{eqnarray*}
\label{norm1}
\|x\|_\P = \inf \{\lambda\geq 0:x\in \lambda \P\}
\end{eqnarray*}
This is what is known as a polyhedral norm or a block norm. 
The dual norm of $\|\cdot\|_\P$ is also a polyhedral norm and is determined by the polar set $\P^\triangle$,
\[\|y\|_\P^\ast = \max_{x\in \P} \, x\cdot y = \inf \{\lambda\geq 0:y\in \lambda \P^\triangle\}  =\|y\|_{\P^\triangle}\]
In general, a linear functional on a convex polytope will achieve its maximum value at some extreme point of the polytope and so the polyhedral norm $\|\cdot\|_\P$ is characterised by,
\begin{eqnarray}
\label{norm2}
\|x\|_\P=\|x\|_\P^{\ast\ast} =\|x\|_{\P^\triangle}^\ast= \max_{y\in \P^\triangle}\, x\cdot y
=\max_{y\in ext(\P^\triangle)}\, x\cdot y
\end{eqnarray}
A point $x\in \bR^d$ belongs to the conical hull $\cone(F)$ of a facet $F$ if
$x= \sum_{j=1}^n\lambda_jx_j$ for some non-negative scalars $\lambda_j$ and some finite set of points $x_1,x_2\ldots,x_n\in F$. 
By formulas (\ref{PolarSet}), (\ref{Facet}) and (\ref{norm2}) the following equivalence holds,
\begin{eqnarray}
\label{norm3}
x\in \cone(F) \,\,\,\,\, \Leftrightarrow \,\,\,\,\, \|x\|_\P = x\cdot \hat{F} 
\end{eqnarray}

Each isometry of the normed space $(\bR^d,\|\cdot\|_\P)$ is affine (by the Mazur-Ulam theorem) and hence is a composition of a linear isometry and a translation.
A linear isometry must leave invariant the finite set of extreme points of $\P$ and is completely determined by its action on any $d$ linearly independent extreme points. Thus there exist only finitely many linear isometries on  $(\bR^d,\|\cdot\|_\P)$.

A continuous rigid motion of $(\bR^d,\|\cdot\|_\P)$ is  a family of continuous paths, 
\[\alpha_x:(-\delta,\delta)\to\bR^d,\,\,\,\,\,\,\,\,\,x\in\bR^d\] with the property that $\alpha_x(0)=x$ and 
for every pair $x,y\in\bR^d$ the distance
$\|\alpha_x(t)-\alpha_y(t)\|_\P$ remains constant for all values of $t$.
If $\delta$ is sufficiently small then the isometries $\Gamma_t:x\mapsto \alpha_x(t)$
are necessarily translational since by continuity the linear part must equal the identity transformation.
Thus we may assume that a continuous rigid motion of  $(\bR^d,\|\cdot\|_\P)$  is a family of continuous paths of the form \[\alpha_x(t)=x+c(t), \,\,\,\,\,\,\,\,\,\,x\in\bR^d\]
for some continuous function $c:(-\delta,\delta)\to \bR^d$  (cf. \cite[Lemma 6.2]{kit-pow-2}).

An infinitesimal rigid motion of $(\bR^d,\|\cdot\|_\P)$ is a vector field on $\bR^d$ which arises from the velocity vectors of a continuous rigid motion.
Since the continuous rigid motions are initially of translational type, 
the infinitesimal rigid motions of $(\bR^d,\|\cdot\|_\P)$  are precisely the
constant maps \[\gamma:\bR^d\to\bR^d,  \,\,\,\,\,\,\, x\mapsto a\]
for some $a\in \bR^d$
(cf. \cite[Lemma 2.3]{kit-pow}).


Let $(G,p)$ be a bar-joint framework in  $(\mathbb{R}^d,\|\cdot\|_\P)$.
A {\em continuous (or finite) flex} of  $(G,p)$  is  a family of continuous paths 
\[\alpha_v:(-\delta,\delta)\to \mathbb{R}^d,\,\,\,\,\,\,\,\,\,\, v\in V(G)\] 
such that $\alpha_v(0)=p_v$ for each vertex  $v\in V(G)$ and  $\|\alpha_v(t)-\alpha_w(t)\|_\P=\|p_v-p_w\|_\P$ for all $|t|<\delta$ and each edge $vw\in E(G)$.
A continuous flex of $(G,p)$  is regarded as trivial if it arises as the restriction of a continuous rigid motion of $(\bR^d,\|\cdot\|_\P)$ to $p(V(G))$. 
If every continuous flex of $(G,p)$ is trivial then we say that $
(G,p)$ is {\em continuously rigid}.

An {\em infinitesimal flex} of $(G,p)$ is a map $u:V(G)\to \bR^d$, $v\mapsto u_v$ 
which satisfies, 
\begin{eqnarray}
\label{flexcondition}
\|(p_v+tu_v)-(p_w+tu_w)\|_\P-\|p_v-p_w\|_\P = o(t), \,\,\,\,\,\, \mbox{ as }t\to0
\end{eqnarray}
for each edge $vw\in E(G)$.
We will denote the collection of infinitesimal flexes of $(G,p)$  by $\mathcal{F}(G,p)$.
An infinitesimal flex of $(G,p)$ is regarded as trivial if it arises as the restriction of an infinitesimal rigid motion of $(\bR^d,\|\cdot\|_\P)$ to $p(V(G))$. In other words, an infinitesimal flex of $(G,p)$ is trivial if and only if it is constant. A bar-joint framework is {\em infinitesimally rigid} if every infinitesimal flex of $(G,p)$ is trivial.  Regarding $\F(G,p)$ as a real vector space with component-wise addition and scalar multiplication, the trivial infinitesimal flexes of $(G,p)$ form a $d$-dimensional subspace $\T(G,p)$ of $\F(G,p)$.
The infinitesimal flex dimension of $(G,p)$ is the vector space dimension of the quotient space $\F(G,p)/\T(G,p)$.

\section{Support functionals and rigidity}
\label{InfFlexes}
In this section we begin by highlighting the connection between the infinitesimal flex condition (\ref{flexcondition}) for a general norm on $\bR^d$ and support functionals on $(\bR^d,\|\cdot\|)$. We then characterise the space of infinitesimal flexes for a general bar-joint framework in $(\bR^d,\|\cdot\|_\P)$ in terms of support functionals and prove the equivalence of finite and infinitesimal rigidity for finite well-positioned bar-joint frameworks in $(\bR^d,\|\cdot\|_\P)$. Following this we describe the rigidity matrix for general finite bar-joint frameworks  in $(\bR^d,\|\cdot\|_\P)$ and compute some examples.

\subsection{Support functionals}
Let $\|\cdot\|$ be an arbitrary norm on $\bR^d$ and denote by $B$ the closed unit ball in $(\bR^d,\|\cdot\|)$.
A linear functional $f:\bR^d\to\bR$ is a support functional for a point $x_0\in\bR^d$ if $f(x_0)=\|x_0\|^2$ and $\|f\|^\ast=\|x_0\|$.
Equivalently, $f$ is a support functional for $x_0$ if  the hyperplane 
\[H=\{x\in\bR^d: f(x)=\|x_0\|\}\]
is a supporting hyperplane for $B$ which contains $\frac{x_0}{\|x_0\|}$.

\begin{lemma}
\label{SuppFunc}
Let $\|\cdot\|$ be a norm on $\bR^d$ and let $x_0\in\bR^d$. 
If $f:\bR^d\to\bR $ is a support functional for $x_0$ then,
\[f(y)\leq\|x_0\|\frac{\|x_0+ty\|-\|x_0\|}{t}, \,\,\,\,\,\, \forall \,\, t>0\] and 
\[f(y)\geq\|x_0\|\frac{\|x_0+ty\|-\|x_0\|}{t}, \,\,\,\,\,\, \forall\,\, t<0\]  
for all $y\in\bR^d$.
\end{lemma}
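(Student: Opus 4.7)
The plan is to reduce both inequalities to a single application of the duality bound $f(v)\leq\|f\|^\ast\|v\|$ combined with the linearity of $f$. Since $f$ is a support functional for $x_0$, we have the two identities $f(x_0)=\|x_0\|^2$ and $\|f\|^\ast=\|x_0\|$, and these are the only facts about $f$ that should be needed.

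First I would observe that for every $t\in\bR$ and every $y\in\bR^d$,
\[
f(x_0+ty)=f(x_0)+tf(y)=\|x_0\|^2+tf(y),
\]
while the dual norm bound gives
\[
f(x_0+ty)\leq \|f\|^\ast\,\|x_0+ty\|=\|x_0\|\,\|x_0+ty\|.
\]
Combining these two lines produces the single master inequality
\[
tf(y)\leq \|x_0\|\bigl(\|x_0+ty\|-\|x_0\|\bigr),
\]
valid for every $t\in\bR$ and $y\in\bR^d$.

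The two claims in the lemma then follow immediately by dividing the master inequality by $t$, being careful with the sign: division by $t>0$ preserves the direction and yields the first inequality, while division by $t<0$ reverses the direction and yields the second. No further case analysis or approximation argument is needed, and the full strength of the polyhedral structure of $\P$ is not used here; the statement holds for an arbitrary norm, as advertised.

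There is no real obstacle in this argument; the only subtle point worth flagging is the sign-flip when dividing by $t<0$, which is what splits the single master inequality into the two superficially different bounds that appear in the lemma's statement.
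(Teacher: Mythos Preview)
Your proof is correct and follows essentially the same approach as the paper: both arguments combine the linearity identity $f(x_0+ty)=\|x_0\|^2+tf(y)$ with the dual-norm bound $f(x_0+ty)\leq\|x_0\|\,\|x_0+ty\|$ and then divide by $t$. The only cosmetic difference is that the paper handles the case $t<0$ by substituting $(-y,-t)$ into the $t>0$ inequality, whereas you obtain it directly from your master inequality by dividing through by a negative number; the underlying computation is identical.
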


\begin{proof}
Since $f$ is linear and $f(x_0)=\|x_0\|^2$ we have for all $y\in\bR^d$,
\begin{eqnarray*}
f(y)&=& \frac{1}{t}(f(x_0+ty) - \|x_0\|^2) 
\end{eqnarray*}
If $t>0$ then since $f(x)\leq \|x_0\|\|x\|$ for all $x\in\bR^d$ we have
\begin{eqnarray*}
f(y) \leq  \|x_0\|\frac{\|x_0+ty\|-\|x_0\|}{t}
\end{eqnarray*}
If $t<0$ then applying the above inequality,
\begin{eqnarray*}
f(y) =-f(-y)\geq  -\|x_0\|\frac{\|x_0-t(-y)\|-\|x_0\|}{-t}= \|x_0\|\frac{\|x_0+ty\|-\|x_0\|}{t}
\end{eqnarray*}
\end{proof}

Let $(G,p)$ be a bar-joint framework in $(\bR^d,\|\cdot\|)$ and fix an orientation for each edge $vw\in E(G)$. We denote by $\supp(vw)$ the set of all support functionals for $p_v-p_w$.
(The choice of orientation on the edges of $G$ is for convenience only and has no bearing on the results that follow. Alternatively, we could avoid choosing an orientation by defining $\supp(vw)$ to be the set of all linear functionals which are support functionals for either $p_v-p_w$ or $p_w-p_v$.)

\begin{proposition}
\label{Supp}
If $(G,p)$ is a bar-joint framework in $(\bR^d,\|\cdot\|)$
and $u:V(G)\to\bR^d$ is an infinitesimal flex of $(G,p)$ then
\[u_v-u_w\in\bigcap_{f\in \supp(vw)}\, \ker f\]
for each edge $vw\in E(G)$.
\end{proposition}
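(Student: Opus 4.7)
The plan is to apply Lemma \ref{SuppFunc} directly, with the substitution $x_0 = p_v - p_w$ and $y = u_v - u_w$ for a fixed edge $vw \in E(G)$. The infinitesimal flex condition (\ref{flexcondition}) can be rewritten in this notation as
\[
\|x_0 + ty\| - \|x_0\| = o(t) \quad \text{as } t \to 0,
\]
so in particular the difference quotient $\frac{\|x_0 + ty\| - \|x_0\|}{t}$ tends to $0$ from both sides.

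Now let $f \in \supp(vw)$ be any support functional for $x_0$. By Lemma \ref{SuppFunc}, for all $t > 0$,
\[
f(y) \leq \|x_0\| \, \frac{\|x_0 + ty\| - \|x_0\|}{t},
\]
and for all $t < 0$,
\[
f(y) \geq \|x_0\| \, \frac{\|x_0 + ty\| - \|x_0\|}{t}.
\]
First I would let $t \to 0^+$ in the first inequality to obtain $f(y) \leq 0$, and then $t \to 0^-$ in the second to obtain $f(y) \geq 0$. Combining these gives $f(u_v - u_w) = f(y) = 0$, which is exactly the assertion that $u_v - u_w \in \ker f$. Since $f \in \supp(vw)$ was arbitrary, the intersection over all support functionals contains $u_v - u_w$.

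Since the argument is a short and direct consequence of the preceding lemma together with the definition of an infinitesimal flex, there is no real obstacle to overcome. The only mild subtlety is that the orientation chosen on the edge $vw$ does not matter: if $f$ is a support functional for $p_v - p_w$, then $-f$ is a support functional for $p_w - p_v$, and $f(u_v - u_w) = 0$ iff $(-f)(u_w - u_v) = 0$, so the statement is independent of this choice (as remarked in the paragraph preceding the proposition).
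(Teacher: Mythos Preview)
Your proof is correct and follows essentially the same argument as the paper: set $x_0=p_v-p_w$, $y=u_v-u_w$, apply Lemma~\ref{SuppFunc} to sandwich $f(y)/\|x_0\|$ between the one-sided limits of the difference quotient, and use the infinitesimal flex condition to conclude both limits vanish. The added remark on orientation independence is also consistent with the paper's comment preceding the proposition.
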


\proof
Let $vw\in E(G)$ and suppose $f$ is a support functional for $p_v-p_w$.
Applying Lemma \ref{SuppFunc}
with $x_0=p_v-p_w$ and $y=u_v-u_w$ we have,
\begin{eqnarray*}
\lim_{t\to0^-}\frac{\|x_0+ty\|-\|x_0\|}{t}\leq\frac{f(y)}{\|x_0\|} 
\leq  \lim_{t\to 0^+}\frac{\|x_0+ty\|-\|x_0\|}{t}
\end{eqnarray*}
Since $u$ is an infinitesimal flex of $(G,p)$,
$\lim_{t\to0}\frac{1}{t}(\|x_0+ty\|-\|x_0\|)=0$
and so $f(y)=0$.

\endproof

Let $\|\cdot\|_\P$ be a polyhedral norm on $\bR^d$. For each facet $F$ of $\P$ denote by $\varphi_F$ the linear functional 
\[\varphi_F:\bR^d\to\bR, \,\,\,\,\,\,\, x\mapsto x\cdot  \hat{F}\]

\begin{lemma}
\label{SuppLemma}
Let $\|\cdot\|_\P$ be a polyhedral norm on $\bR^d$, let $F$ be a facet of $\P$ and let $x_0\in \bR^d$.
Then $x_0\in \cone(F)$ if and only if the linear functional,
\[\varphi_{F,x_0}:\bR^d\to \bR,\,\,\,\,\,\,\,\, x\mapsto \|x_0\|_\P \,\varphi_F(x)\]
is a support functional for $x_0$. 
\end{lemma}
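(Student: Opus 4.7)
The plan is to unpack the two defining conditions of a support functional and reduce each to the characterisation of $\cone(F)$ already recorded in equation (\ref{norm3}). Recall that $\varphi_{F,x_0}$ is a support functional for $x_0$ precisely when $\varphi_{F,x_0}(x_0)=\|x_0\|_\P^2$ and $\|\varphi_{F,x_0}\|_\P^\ast=\|x_0\|_\P$.

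First I would dispose of the norm condition, which will turn out to be automatic. By definition $\|\varphi_{F,x_0}\|_\P^\ast = \|x_0\|_\P\,\|\varphi_F\|_\P^\ast$, so it suffices to show that $\|\varphi_F\|_\P^\ast=1$. By the identification of the dual norm in the preliminaries we have $\|\varphi_F\|_\P^\ast=\|\hat F\|_{\P^\triangle}$, and since $\hat F$ is an extreme point of $\P^\triangle$ it lies on the boundary, giving $\|\hat F\|_{\P^\triangle}=1$. (Equivalently, every $x\in\P$ satisfies $x\cdot\hat F\leq 1$ by (\ref{PolarSet}), with equality attained on $F$.) Thus $\|\varphi_{F,x_0}\|_\P^\ast=\|x_0\|_\P$ holds unconditionally, and the question of whether $\varphi_{F,x_0}$ is a support functional for $x_0$ reduces to whether $\varphi_{F,x_0}(x_0)=\|x_0\|_\P^2$.

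Next, I would rewrite this remaining identity as
\[
\|x_0\|_\P\,(x_0\cdot\hat F)\;=\;\|x_0\|_\P^2.
\]
If $x_0=0$ both sides vanish and $0\in\cone(F)$ trivially, so assume $x_0\neq 0$. Dividing by $\|x_0\|_\P$ the condition becomes $x_0\cdot\hat F=\|x_0\|_\P$, and this is exactly the equivalence (\ref{norm3}) characterising membership in $\cone(F)$. So the evaluation condition holds if and only if $x_0\in\cone(F)$, which closes both directions of the stated equivalence.

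There is essentially no obstacle here beyond unwinding definitions; the only point that requires a moment's care is justifying $\|\varphi_F\|_\P^\ast=1$, which I would present as a direct consequence of the polar duality $\P^{\triangle\triangle}=\P$ together with the fact that $\hat F\in ext(\P^\triangle)$.
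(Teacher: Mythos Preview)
Your proof is correct and uses the same ingredients as the paper's own argument, namely the polar-set inequality (\ref{PolarSet}) for the dual-norm condition and the equivalence (\ref{norm3}) for the evaluation condition. The only organisational difference is that you first isolate $\|\varphi_{F,x_0}\|_\P^\ast=\|x_0\|_\P$ as holding unconditionally, whereas the paper verifies both support-functional conditions together in the forward direction and checks only the failure of the evaluation condition for the converse; your presentation is slightly cleaner but not materially different.
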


\begin{proof}
If $x_0\in \cone(F)$ then by formula (\ref{norm3}), $\varphi_{F,x_0}\left(x_0\right)=\|x_0\|_\P^2$.
By (\ref{PolarSet}) we have $\varphi_{F,x_0}(x)\leq \|x_0\|_\P$ for each $x\in \P$ and
it follows that $\varphi_{F,x_0}$ is a support functional for $x_0$. Conversely, if $x_0\notin \cone(F)$ then by (\ref{norm3}), $\varphi_{F,x_0}(x_0)<\|x_0\|_\P^2$ and so $\varphi_{F,x_0}$ is not a support functional for $x_0$.
\end{proof}

For each oriented  edge $vw\in E(G)$ we denote by 
$\supp_\Phi(vw)$ the set of all linear functionals $\varphi_F$ which are support functionals for
$\frac{p_v-p_w}{\|p_v-p_w\|_\P}$.



\begin{proposition}
\label{finiteflex1}
Let $(G,p)$ be a finite bar-joint framework in $(\mathbb{R}^d,\|\cdot\|_\P)$.
If   $u:V(G)\to\bR^d$ satisfies
\[ u_v-u_w\in \bigcap_{\varphi_F\in \supp_\Phi(vw)}\, \ker \varphi_F\]
for each edge $vw\in E(G)$
then there exists $\delta>0$ such that the family 
\[\alpha_v:(-\delta,\delta)\to \bR^d,\,\,\,\,\,\,\, \alpha_v(t)= p_v+tu_v\]
is a finite flex of $(G,p)$.
\end{proposition}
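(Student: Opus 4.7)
The plan is to verify directly that $\|\alpha_v(t)-\alpha_w(t)\|_\P=\|p_v-p_w\|_\P$ for each edge $vw$ and all sufficiently small $t$, obtaining a single $\delta>0$ that works for the whole (finite) framework by taking a minimum over edges. Fix an edge $vw$ and set $x=p_v-p_w$ and $y=u_v-u_w$. By formula (\ref{norm2}) applied to $\P^\triangle$, whose extreme points are precisely the vectors $\hat{F}$ as $F$ runs over facets of $\P$, one has
\[\|x+ty\|_\P=\max_{F}\,(x+ty)\cdot\hat{F},\]
a maximum over a \emph{finite} collection of facets. This reduction is the reason polyhedral norms admit a particularly transparent argument.

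Next I would unpack the hypothesis. By Lemma \ref{SuppLemma} together with the characterization (\ref{norm3}), a functional $\varphi_F$ lies in $\supp_\Phi(vw)$ precisely when $x\in\cone(F)$, equivalently when $x\cdot\hat{F}=\|x\|_\P$. Let me denote this collection of ``active'' facets by $\Phi(x)$. The assumption on $u$ then reads $y\cdot\hat{F}=0$ for every $F\in\Phi(x)$, so that
\[(x+ty)\cdot\hat{F}=x\cdot\hat{F}+t(y\cdot\hat{F})=\|x\|_\P\]
identically in $t$ for every active facet.

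It remains to rule out the inactive facets for small $t$. For each $F\notin\Phi(x)$ the quantity $\|x\|_\P-x\cdot\hat{F}$ is strictly positive, and since there are only finitely many facets, the minimum gap $\epsilon_{vw}>0$ exists. Setting $M_{vw}:=\max_{F}|y\cdot\hat{F}|$ and choosing any $0<\delta_{vw}<\epsilon_{vw}/(M_{vw}+1)$ forces $(x+ty)\cdot\hat{F}<\|x\|_\P$ for all inactive $F$ and all $|t|<\delta_{vw}$. Combined with the previous display, the maximum in the expression for $\|x+ty\|_\P$ is then attained on $\Phi(x)$ and equals $\|x\|_\P$. Taking $\delta:=\min_{vw\in E(G)}\delta_{vw}>0$, which exists because $E(G)$ is finite, yields the required finite flex.

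I do not expect a serious obstacle. The conceptual heart is that $\|\cdot\|_\P$ is \emph{linear} on each cone $\cone(F)$, so the hypothesis of vanishing on the active support functionals constrains the perturbation $ty$ to remain in the intersection of the supporting hyperplanes through $x$, keeping $x+ty$ in $\bigcap_{F\in\Phi(x)}\cone(F)$ locally. Finiteness of the facet set and of $E(G)$ is exactly what converts the local linearity into a uniform positive $\delta$.
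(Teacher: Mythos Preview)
Your proof is correct and follows essentially the same route as the paper's: fix an edge, identify the active facets via Lemma~\ref{SuppLemma} and (\ref{norm3}), observe that $(x+ty)\cdot\hat{F}=\|x\|_\P$ for active $F$, argue that inactive facets stay strictly below $\|x\|_\P$ for small $t$, and finish by taking a minimum over the finitely many edges. The only difference is cosmetic: where the paper invokes ``by continuity'' to produce $\delta_{vw}$, you make the gap argument explicit with $\epsilon_{vw}$ and $M_{vw}$.
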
 

\begin{proof}
Let  $vw\in E(G)$  and write $x_0=p_v-p_w$ and $u_0=u_v-u_w$.
If $\varphi_{F}$ is a support functional for $\frac{x_0}{\|x_0\|_\P}$ then,  by the hypothesis, $\varphi_F(u_0)=0$.
By Lemma \ref{SuppLemma}, $x_0$ is contained in the conical hull of the facet $F$. 
Applying formulas (\ref{norm2}) and (\ref{norm3}),
\[\|x_0\|_\P = \max_{y\in ext(\P^\triangle)} x_0\cdot y  = x_0\cdot\hat{F}\]
By continuity there exists $\delta_{vw}>0$ such that for all $|t|<\delta_{vw}$,
\begin{eqnarray*}
\|x_0+tu_0\|_\P
&=& \max_{y\in ext(\P^\triangle)} (x_0+tu_0)\cdot y\\
&=& (x_0+tu_0)\cdot \hat{F} \\
&=& \|x_0\|_\P+t\,\varphi_F(u_0)\\
&=& \|x_0\|_\P
\end{eqnarray*}
Since $G$ is a finite graph the result holds with $\delta=\min_{vw\in E(G)}\delta_{vw}>0$.
\end{proof}

The following is a characterisation of the space of infinitesimal flexes of a general bar-joint framework in $(\bR^d,\|\cdot\|_\P)$.

\begin{theorem}
\label{flex1}
Let $(G,p)$ be a bar-joint framework in $(\mathbb{R}^d,\|\cdot\|_\P)$.
Then a mapping  $u:V(G)\to \mathbb{R}^d$ is an infinitesimal flex of $(G,p)$ if and only if 
\[ u_v-u_w\in \bigcap_{\varphi_F\in \supp_\Phi(vw)}\, \ker \varphi_F\]
for each edge $vw\in E(G)$.
\end{theorem}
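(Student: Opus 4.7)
My plan is to prove the two implications separately; each will reduce to results already established in the section.

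For the forward direction, I would suppose $u$ is an infinitesimal flex of $(G,p)$, fix an edge $vw\in E(G)$, and set $x_0=p_v-p_w$. Proposition \ref{Supp}, which is valid for an arbitrary norm, places $u_v-u_w$ in the kernel of every support functional for $x_0$. If $\varphi_F\in\supp_\Phi(vw)$ then by definition $\varphi_F$ is a support functional for $x_0/\|x_0\|_\P$; equivalently, by Lemma \ref{SuppLemma}, the scaled functional $\varphi_{F,x_0}=\|x_0\|_\P\,\varphi_F$ is a support functional for $x_0$. Since $\ker\varphi_{F,x_0}=\ker\varphi_F$, Proposition \ref{Supp} delivers $u_v-u_w\in\ker\varphi_F$, which is what is required.

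For the reverse direction, the calculation in the proof of Proposition \ref{finiteflex1} already does the work; the only point to observe is that this calculation is entirely edge-local, so the finiteness of $G$ plays no role in verifying (\ref{flexcondition}) edge-by-edge. I would fix $vw\in E(G)$, set $x_0=p_v-p_w$, $u_0=u_v-u_w$, and use (\ref{norm2}) to write
\[
\|x_0+tu_0\|_\P=\max_{y\in ext(\P^\triangle)}(x_0+tu_0)\cdot y.
\]
For each $\hat F$ at which the $t=0$ maximum is attained (equivalently, each $\varphi_F\in\supp_\Phi(vw)$) the hypothesis gives $u_0\cdot\hat F=0$; for every other extreme point $y$ one has $x_0\cdot y<\|x_0\|_\P$. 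A continuity argument identical to the one in Proposition \ref{finiteflex1} produces $\delta_{vw}>0$ such that for $|t|<\delta_{vw}$ the maximum is still attained at one of the above $\hat F$, forcing $\|x_0+tu_0\|_\P=\|x_0\|_\P$. This is stronger than the $o(t)$ estimate in (\ref{flexcondition}), so $u$ is an infinitesimal flex.

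The principal conceptual point, implicit in the forward direction, is that the intersection $\bigcap_{f\in\supp(vw)}\ker f$ in Proposition \ref{Supp} coincides with the much smaller intersection $\bigcap_{\varphi_F\in\supp_\Phi(vw)}\ker\varphi_F$ indexed over facet functionals. This collapse reflects the polyhedral structure of $\|\cdot\|_\P$: the support functionals at $x_0$ form a face of the rescaled polar body $\|x_0\|_\P\,\P^\triangle$ whose extreme points are exactly the $\varphi_{F,x_0}$ with $x_0\in\cone(F)$. I do not anticipate any genuine obstacle beyond articulating this identification cleanly; once it is in hand, one direction is Proposition \ref{Supp} combined with Lemma \ref{SuppLemma}, and the other is the edge-local version of Proposition \ref{finiteflex1}.
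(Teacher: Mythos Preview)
Your proposal is correct and follows essentially the same route as the paper: the forward direction is Proposition \ref{Supp} (with the scaling observation via Lemma \ref{SuppLemma} that you make explicit), and the converse is the edge-local argument from Proposition \ref{finiteflex1}. Your final paragraph on the coincidence of the two intersections is a true and pleasant structural remark, but note that it is not actually required for either implication---the forward direction only needs the containment $\bigcap_{f\in\supp(vw)}\ker f\subseteq\bigcap_{\varphi_F\in\supp_\Phi(vw)}\ker\varphi_F$, which is immediate once you observe that each $\varphi_{F,x_0}$ lies in $\supp(vw)$.
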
 

\begin{proof}
If $u$ is an infinitesimal flex of $(G,p)$ then the result follows from Proposition \ref{Supp}. 
For the converse, let  $vw\in E(G)$  and write $x_0=p_v-p_w$ and $u_0=u_v-u_w$.
Applying the argument in the proof of Proposition \ref{finiteflex1}, 
there exists $\delta_{vw}>0$ with $\|x_0+tu_0\|_\P=\|x_0\|_\P$ for all $|t|<\delta_{vw}$.
Hence $u$ is an infinitesimal flex of $(G,p)$.
\end{proof}

\subsection{Equivalence of finite and infinitesimal rigidity}
A placement of a simple graph $G$ in $\bR^d$ is a map $p:V(G)\to\bR^d$ for which $p_v\not=p_w$ whenever $vw\in E(G)$.
 A placement $p:V(G)\to\bR^d$ is {\em well-positioned} with respect to a polyhedral norm 
on $\bR^d$ if $p_v-p_w$ is contained in the conical hull of exactly one facet of the unit ball $\P$ for each edge $vw\in E(G)$. We denote this unique facet by $F_{vw}$.
In the following discussion $G$ is a finite graph and each placement  is identified with a point $p=(p_v)_{v\in V(G)}$ in the product space $\prod_{v\in V(G)}\bR^{d}$ which we regard as having the usual topology. 
 The set  of all well-positioned placements of $G$  in $(\bR^d,\|\cdot\|_\P)$ is an open and dense subset of this product space. The {\em configuration space}  for  a bar-joint framework $(G,p)$ is defined as,
\[V(G,p) = \{x\in \prod_{v\in V(G)}\bR^{d}:\|x_v-x_w\|_\P=\|p_v-p_w\|_\P, \,\, \forall \,\,vw\in 
E(G)\}\]

\begin{proposition}
\label{Configuration}
Let $(G,p)$ be a finite well-positioned bar-joint framework in $(\bR^d,\|\cdot\|_\P)$
with $p_v-p_w\in\cone(F_{vw})$ for each $vw\in E(G)$.
Then there exists a neighbourhood $U$ of $p$ in $\prod_{v\in V(G)}\bR^{d}$ 
such that,
\begin{enumerate}[(i)]
\item  if $x\in U$ then $x_v-x_w\in \cone(F_{vw})$  for each edge $vw\in E(G)$,
\item $(G,x)$ is a well-positioned bar-joint framework  for each $x\in U$, and,
\item
$V(G,p)\cap U = \{x\in U: \varphi_{F_{vw}}(x_{v}-x_{w})=\varphi_{F_{vw}}(p_{v}-p_{w}), \,\, \forall \,\,vw\in E(G)\}$.
\end{enumerate}
In particular,
 $V(G,p)\cap U= (p+\F(G,p))\cap U$.
\end{proposition}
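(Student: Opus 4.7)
The plan is to exploit the well-positioned assumption in order to reduce the nonlinear norm-preservation condition to a linear one on each edge. The geometric crux is the observation that a nonzero vector $z\in\bR^d$ lies in $\cone(F)$ for exactly one facet $F$ of $\P$ if and only if $z$ lies in the topological interior of $\cone(F)$ inside $\bR^d$. Indeed, the cones $\{\cone(F):F\text{ a facet of }\P\}$ cover $\bR^d$, and two distinct such cones meet only in the cone over the common face $F\cap F'$, which has dimension at most $d-2$; so a nonzero point lies in more than one facet-cone precisely when it sits on the boundary of each.

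With this in hand, part (i) follows by choosing for each edge $vw$ an open neighbourhood of $p$ on which the continuous map $(x_v,x_w)\mapsto x_v-x_w$ takes values in the (open) interior of $\cone(F_{vw})$, and then intersecting these finitely many open sets to produce $U$. Part (ii) is immediate from (i): for every $x\in U$, each difference $x_v-x_w$ lies in the interior of a single facet-cone, so $x_v\neq x_w$ and $(G,x)$ is well-positioned with the same designated facets $F_{vw}$.

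For (iii), I would apply the identity (\ref{norm3}): if $y\in\cone(F)$ then $\|y\|_\P=y\cdot\hat F=\varphi_F(y)$. On $U$ both $x_v-x_w$ and $p_v-p_w$ lie in $\cone(F_{vw})$, so the norm equation $\|x_v-x_w\|_\P=\|p_v-p_w\|_\P$ reduces to the linear equation $\varphi_{F_{vw}}(x_v-x_w)=\varphi_{F_{vw}}(p_v-p_w)$. For the concluding identity, combining Lemma \ref{SuppLemma} with Theorem \ref{flex1} shows that, because $(G,p)$ is well-positioned, the set $\supp_\Phi(vw)$ reduces to a single functional proportional to $\varphi_{F_{vw}}$, so $u\in\F(G,p)$ if and only if $\varphi_{F_{vw}}(u_v-u_w)=0$ for every edge $vw$. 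Writing $x=p+u$ and subtracting the two linear equalities in (iii) then identifies $V(G,p)\cap U$ with $(p+\F(G,p))\cap U$.

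The main obstacle, in my view, is formalising the first observation cleanly: a careful inspection of the face lattice of $\P$ is needed to verify that distinct facet-cones overlap in codimension at least two, and that the boundary of $\cone(F)$ is exactly the union of cones over proper subfaces of $F$. Once this ``exactly one facet-cone $\Leftrightarrow$ interior'' dichotomy is in place, the remainder is routine bookkeeping with the linearity of $\varphi_F$ and the formula (\ref{norm3}).
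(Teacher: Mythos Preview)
Your proposal is correct and follows essentially the same approach as the paper: define the continuous edge-difference maps, pull back the interiors of the facet-cones, intersect finitely many open sets to get $U$, use formula~(\ref{norm3}) to linearise the norm constraint on $U$, and then invoke Lemma~\ref{SuppLemma} and Theorem~\ref{flex1} to identify $V(G,p)\cap U$ with $(p+\F(G,p))\cap U$. The only difference is one of emphasis: the paper simply asserts that well-positioned means $p_v-p_w$ lies in $\cone(F_{vw})^\circ$ and moves on, whereas you flag the equivalence ``in exactly one facet-cone $\Leftrightarrow$ in the interior of that cone'' as the main obstacle; in fact this is a standard consequence of the facet-cones forming a fan covering $\bR^d$, and the paper treats it as routine rather than as something requiring a detailed face-lattice argument.
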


\begin{proof}
Let $vw\in E(G)$ be an oriented edge and consider the continuous map, 
\[T_{vw}:\prod_{v'\in V(G)}\bR^{d}\to\bR,
\,\,\,\,\,\, (x_{v'})_{v'\in V(G)}\mapsto x_{v}-x_{w}\]
Since $(G,p)$ is well-positioned, $p_v-p_{w}$ is an interior point of the conical hull of a unique facet 
$F_{vw}$ of $\P$. The preimage $T_{vw}^{-1}(\cone(F_{vw})^\circ)$ is an open neighbourhood of $p$. Since  $G$ is a finite graph the intersection, 
\[U = \bigcap_{vw\in E(G)} T_{vw}^{-1}(\cone(F_{vw})^\circ)\]
is an open neighbourhood of $p$ which satisfies $(i)$, $(ii)$ and $(iii)$. 

Since $(G,p)$ is well-positioned, by Lemma \ref{SuppLemma}, there is exactly one support functional in $\supp_\Phi(vw)$ for each edge $vw$ and this functional is given by $\varphi_{F_{vw}}$.
If $x\in  U$ then  define $u=(u_v)_{v\in V(G)}$ by setting 
$u_v=x_v-p_v$ for each $v\in V(G)$.
By $(iii)$, $x\in V(G,p)\cap U$ if and only if $x\in U$ and 
\[ \varphi_{F_{vw}} (u_v-u_w)= \varphi_{F_{vw}}(x_v-x_w)- \varphi_{F_{vw}}(p_v-p_w)=0\] for each edge $vw\in E(G)$.
By Theorem \ref{flex1}, the latter identity is equivalent to the condition that $u$ is an infinitesimal flex of $(G,p)$.
Thus $x\in V(G,p)\cap U$ if and only if $x\in U$ and $x-p\in \F(G,p)$.
\end{proof}

We now prove the equivalence of continuous rigidity and infinitesimal rigidity for finite well-positioned bar-joint frameworks. 

\begin{theorem}\label{t:rigiditythm2}
Let $(G,p)$ be a finite well-positioned   bar-joint framework in $(\bR^d,\|\cdot\|_\P)$.
Then the following statements are equivalent.
\begin{enumerate}[(i)]
\item
$(G,p)$ is continuously rigid.
\item
$(G,p)$ is infinitesimally rigid.
\end{enumerate}
\end{theorem}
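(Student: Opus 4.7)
The plan is to prove both directions as straightforward consequences of the local structure result Proposition \ref{Configuration} together with Proposition \ref{finiteflex1}. The key idea is that Proposition \ref{Configuration} identifies $V(G,p)\cap U$ with the affine slice $(p+\F(G,p))\cap U$, so near $p$ the configuration space is literally a flat piece of the infinitesimal flex space; this collapses the usual gap between finite and infinitesimal rigidity.

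For the implication $(ii)\Rightarrow(i)$, I would take an arbitrary continuous flex $\alpha_v:(-\delta,\delta)\to\bR^d$ of $(G,p)$ and consider the continuous path $t\mapsto \alpha(t)=(\alpha_v(t))_{v\in V(G)}$ in $\prod_{v\in V(G)}\bR^d$. Since $\alpha(0)=p$, by continuity we may shrink $\delta$ so that $\alpha(t)\in U$ for all $|t|<\delta$, where $U$ is the neighbourhood provided by Proposition \ref{Configuration}. By definition of a continuous flex, $\alpha(t)\in V(G,p)$, hence $\alpha(t)-p\in\F(G,p)$. The hypothesis that $(G,p)$ is infinitesimally rigid gives $\F(G,p)=\T(G,p)$, so for each $t$ the displacement $\alpha_v(t)-p_v$ is constant in $v$. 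Setting $c(t)=\alpha_{v_0}(t)-p_{v_0}$ for any fixed vertex $v_0$ yields a continuous function $c:(-\delta,\delta)\to\bR^d$ with $c(0)=0$ and $\alpha_v(t)=p_v+c(t)$ for every $v\in V(G)$, which is precisely the restriction of a translational continuous rigid motion of $(\bR^d,\|\cdot\|_\P)$. Thus every continuous flex is trivial.

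For the reverse implication $(i)\Rightarrow(ii)$, I would argue the contrapositive. If $(G,p)$ fails to be infinitesimally rigid, then there exists an infinitesimal flex $u:V(G)\to\bR^d$ which is not constant, so $u_{v_1}\neq u_{v_2}$ for some pair of vertices. By Theorem \ref{flex1}, $u$ satisfies $\varphi_F(u_v-u_w)=0$ for every $\varphi_F\in\supp_\Phi(vw)$ and every edge $vw\in E(G)$, so Proposition \ref{finiteflex1} produces a $\delta>0$ with the property that $\alpha_v(t)=p_v+tu_v$ defines a continuous (finite) flex on $(-\delta,\delta)$. To see that $\alpha$ is not trivial, note that any trivial continuous flex has the form $\alpha_v(t)=p_v+c(t)$ and therefore satisfies $\alpha_{v_1}(t)-\alpha_{v_2}(t)=p_{v_1}-p_{v_2}$ identically in $t$. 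In our case $\alpha_{v_1}(t)-\alpha_{v_2}(t)=(p_{v_1}-p_{v_2})+t(u_{v_1}-u_{v_2})$, which is non-constant in $t$ since $u_{v_1}\neq u_{v_2}$. Hence $(G,p)$ is not continuously rigid.

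Neither direction presents a serious obstacle because Proposition \ref{Configuration} and Proposition \ref{finiteflex1} have already done the analytic work: the former linearises the configuration space near $p$, and the latter provides a genuine (not just formal) linear flex path from every infinitesimal flex. The only point requiring a little care is the passage from $\alpha(t)-p\in\T(G,p)$ to the translational form $\alpha_v(t)=p_v+c(t)$, which uses the explicit description of trivial infinitesimal flexes as constant maps recorded immediately before the theorem; once this identification is made, the equivalence is essentially automatic.
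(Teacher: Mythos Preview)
Your proof is correct and follows essentially the same route as the paper: both directions hinge on Proposition~\ref{Configuration} (linearising $V(G,p)$ near $p$) and Proposition~\ref{finiteflex1} (promoting an infinitesimal flex to a genuine linear finite flex). The only cosmetic difference is that for $(i)\Rightarrow(ii)$ the paper argues directly---it applies continuous rigidity to force $p_v+tu_v=p_v+c(t)$ and then differentiates at $t=0$ to conclude $u_v=c'(0)$ is constant---whereas you phrase the same step contrapositively by checking that $\alpha_{v_1}(t)-\alpha_{v_2}(t)$ is non-constant; these are the same argument in different clothing.
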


\begin{proof}
$(i)\Rightarrow (ii)$.
If $u=(u_v)_{v\in V(G)}\in \F(G,p)$ is an infinitesimal flex of $(G,p)$ then by Theorem \ref{flex1} and Proposition \ref{finiteflex1}, the family 
\[\alpha_v:(-\epsilon,\epsilon)\to\bR^d,\,\,\,\,\,\,\, \alpha_v(t)=p_v+tu_v,\,\,\,\,\,\, v\in V(G)\]
is a finite flex of $(G,p)$ for some $\epsilon>0$.
Since $(G,p)$ is continuously rigid this finite flex must be trivial.
Thus there exists  $\delta>0$ and a continuous path 
$c:(-\delta,\delta)\to \bR^d$ such that $\alpha_v(t)=p_v+c(t)$ for all $|t|<\delta$ and all $v\in V(G)$.
Now $u_v=\alpha_v'(0)=c'(0)$ for all $v\in V(G)$ and so $u$ is a constant, and hence trivial, infinitesimal flex of $(G,p)$.
We conclude that $(G,p)$ is infinitesimally rigid.

$(ii)\Rightarrow (i)$.
If $(G,p)$ has a finite flex given by the family,
\[\alpha_v:(-\epsilon,\epsilon)\to\bR^d,\,\,\,\,\,\, v\in V(G)\]
then consider the continuous path,
\[\alpha:(\epsilon,\epsilon)\to V(G,p), \,\,\,\,\,\,\,\, t\mapsto (\alpha_v(t))_{v\in V(G)}\]
By Proposition \ref{Configuration}, $V(G,p)\cap U=(p+\F(G,p))\cap U$ for some neighbourhood $U$ of $p$. Since $\alpha(0)=p$,  there exists $\delta>0$ such that
$\alpha(t)\in V(G,p)\cap U$ for all $|t|<\delta$.
Choose $t_0\in(-\delta,\delta)$ and define, 
\[u:V(G)\to\bR^d, \,\,\,\,\,\,\, u_v=\alpha_v(t_0)-p_v\]
Then $u=\alpha(t_0)-p\in\F(G,p)$ is an infinitsimal flex of $(G,p)$.
Since $(G,p)$ is infinitesimally rigid, $u$ must be a trivial infinitesimal flex.
Hence $u_v=c(t_0)$ for all $v\in V(G)$ and some $c(t_0)\in\bR^d$.
Apply this same argument  to show that for each $|t|<\delta$ there exists $c(t)$ such that 
$\alpha_v(t) = p_v+c(t)$ for all $v\in V(G)$.
Note that $c:(-\delta, \delta)\to \bR^d$ is continuous
and so $\{\alpha_v:v\in V(G)\}$ is a trivial finite flex of $(G,p)$.
We conclude that $(G,p)$ is continuously rigid.
\end{proof}

The non-equivalence of finite and infinitesimal rigidity for general finite bar-joint frameworks in $(\bR^d,\|\cdot\|_\P)$ is demonstrated in Examples \ref{K2Example} and \ref{RayEx}.




\subsection{The rigidity matrix}
We define the {\em rigidity matrix} $R_\P(G,p)$ for a finite
bar-joint framework $(G,p)$  in $(\bR^d,\|\cdot\|_\P)$ as follows:
Fix an ordering of the vertices $V(G)$ and edges $E(G)$ and choose an orientation 
on the edges of $G$.
For each vertex $v$ assign $d$ columns in the rigidity matrix and label these columns
$p_{v,1},\ldots,p_{v,d}$.
For each directed edge $vw\in E(G)$ and each facet $F$ with $p_v-p_w\in \cone(F)$
assign a row in the rigidity matrix
and label this row by $(vw,F)$.
The entries for the row $(vw,F)$ are given by 
\small{ \begin{eqnarray*}
\kbordermatrix{
& && & p_{v,1}\,&\cdots &\,p_{v,d} \,& && & p_{w,1}& \cdots &\, \,p_{w,d}& & &  \\
&0\,\, &\cdots& \,0& \,\hat{F}_1\,\,&  \cdots & \,\hat{F}_d\, & 0\,\,& \cdots&\, 0 &\,-\hat{F}_1\,\,& \cdots & \,-\hat{F}_d\,& 0\,\,& \cdots&\, 0}
\end{eqnarray*}}
where $p_v-p_w\in cone(F)$ and 
$\hat{F}=(\hat{F}_1,\ldots,\hat{F}_d)\in \bR^d$.
If $(G,p)$ is well-positioned then the rigidity matrix has size $|E(G)|\times d|V(G)|$.
 

\begin{proposition}
\label{RigidityMatrixProp}
Let $(G,p)$ be a finite bar-joint framework in $(\mathbb{R}^d,\|\cdot\|_\P)$.
Then \begin{enumerate}[(i)]
\item
$\F(G,p)\cong\ker R_\P(G,p)$.
\item 
$(G,p)$ is infinitesimally rigid if and only if $\rank R_\P(G,p)= d|V(G)|-d$.
\end{enumerate}
\end{proposition}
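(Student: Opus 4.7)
The plan is to unwind the definition of $R_\P(G,p)$ and see that its kernel directly encodes the flex conditions of Theorem \ref{flex1}, and then to combine this with the $d$-dimensional subspace of trivial flexes to obtain the rank statement.

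First I would identify maps $u:V(G)\to\bR^d$ with vectors in $\bR^{d|V(G)|}$ using the fixed vertex ordering, so that $u$ is listed coordinate-by-coordinate as $(u_{v,1},\ldots,u_{v,d})_{v\in V(G)}$. Then I would compute directly that, for any row $(vw,F)$ of $R_\P(G,p)$, the corresponding entry of $R_\P(G,p)\,u$ is
\[
\sum_{i=1}^d \hat F_i\, u_{v,i} \;-\; \sum_{i=1}^d \hat F_i\, u_{w,i} \;=\; \hat F\cdot(u_v-u_w) \;=\; \varphi_F(u_v-u_w).
\]
Hence $u\in\ker R_\P(G,p)$ if and only if $\varphi_F(u_v-u_w)=0$ for every oriented edge $vw\in E(G)$ and every facet $F$ with $p_v-p_w\in\cone(F)$.

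Next I would invoke Lemma \ref{SuppLemma} together with the definition of $\supp_\Phi(vw)$: since $\varphi_{F,p_v-p_w}=\|p_v-p_w\|_\P\,\varphi_F$ is a support functional for $p_v-p_w$ exactly when $p_v-p_w\in\cone(F)$, and scaling by the positive constant $\|p_v-p_w\|_\P$ does not affect membership of $\ker\varphi_F$, the rows indexed by the facets $F$ with $p_v-p_w\in\cone(F)$ correspond precisely to the elements of $\supp_\Phi(vw)$. Combining this identification with the displayed kernel condition and applying Theorem \ref{flex1} yields
\[
u\in\ker R_\P(G,p) \iff u_v-u_w\in\bigcap_{\varphi_F\in\supp_\Phi(vw)}\ker\varphi_F \;\forall\,vw\in E(G) \iff u\in\F(G,p),
\]
proving $(i)$.

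For $(ii)$ I would argue as follows. The constant maps $v\mapsto a$ lie in $\F(G,p)$ since $u_v-u_w=0$ satisfies every kernel condition trivially, so $\T(G,p)\subseteq\F(G,p)$ is a $d$-dimensional subspace. By definition $(G,p)$ is infinitesimally rigid iff $\F(G,p)=\T(G,p)$, i.e.\ iff $\dim\F(G,p)=d$. By $(i)$ and rank-nullity applied to $R_\P(G,p):\bR^{d|V(G)|}\to\bR^{|\text{rows}|}$, this is equivalent to $\rank R_\P(G,p)=d|V(G)|-d$. There is no real obstacle here; the only point requiring some care is the translation between the row indexing $(vw,F)$ of the rigidity matrix and the set $\supp_\Phi(vw)$, which is handled cleanly by Lemma \ref{SuppLemma} and the positive scaling $\varphi_{F,p_v-p_w}=\|p_v-p_w\|_\P\,\varphi_F$.
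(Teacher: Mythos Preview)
Your proof is correct and follows essentially the same approach as the paper: both identify $u$ with a vector in $\bR^{d|V(G)|}$, observe that the rows of $R_\P(G,p)$ encode precisely the conditions $\varphi_F(u_v-u_w)=0$ from Theorem~\ref{flex1}, and then use the $d$-dimensionality of $\T(G,p)$ together with rank--nullity for~$(ii)$. You have simply written out in detail (via Lemma~\ref{SuppLemma}) the correspondence between the row labels $(vw,F)$ and $\supp_\Phi(vw)$ that the paper leaves implicit.
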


\begin{proof}
The system of equations in Theorem \ref{flex1} is expressed by the matrix equation $R_\P(G,p)u^T=0$ where we  identify $u:V(G)\to\bR^d$ with a row vector $(u_{v_1},\ldots,u_{v_n})\in\bR^{d|V(G)|}$.
Thus $\F(G,p)\cong\ker R_\P(G,p)$.
The space of trivial infinitesimal flexes of $(G,p)$ has dimension $d$ and so 
in general we have \[ \rank R_\P(G,p)\leq d|V(G)|-d\] with equality if and only if
 $(G,p)$ is infinitesimally rigid.
\end{proof}


If $F$ is a facet of $\P$  and  $y_1,y_2,\ldots,y_{d}\in ext(\P)$ are extreme points of $\P$ which are contained in $F$ then
for each column vector $y_k$ we compute 
$[1 \cdots 1]\,A^{-1}\, y_k=1$ 
where 
$A=[y_1\cdots y_d]\in M^{d\times d}(\bR)$.
Hence 
\begin{eqnarray}
\label{RigMatrixEntries1}
\hat{F} &=& [1 \cdots 1]A^{-1}
\end{eqnarray}
Moreover, if $y_1,y_2,\ldots,y_{d}$ are pairwise orthogonal then $A^{-1}=\left[\frac{y_1}{\|y_1\|^2_2} \cdots \frac{y_d}{\|y_d\|^2_2}\right]^T$ and so
\begin{eqnarray}
\label{RigMatrixEntries}
\hat{F} =\sum_{j=1}^d \frac{y_j}{\|y_j\|^2_2}
\end{eqnarray}
where $\|\cdot\|_2$ is the Euclidean norm on $\bR^d$.


\begin{example}
\label{K2Example}
Let $\P$ be a crosspolytope in $\bR^d$ with $2d$ many extreme points $ext(\P)=\{\pm e_k:k=1,\ldots,d\}$ where $e_1,e_2,\ldots,e_d$ is the usual basis in $\bR^d$.
Then each facet $F$ contains  $d$ pairwise orthogonal extreme points $y_1,y_2,\ldots,y_d$ 
each of Euclidean norm $1$. By (\ref{RigMatrixEntries}), $\hat{F} = \sum_{j=1}^d y_j$ and the resulting polyhedral norm is the $1$-norm
\[\|x\|_\P=\max_{y\in ext(\P^\triangle)} x\cdot y = \sum_{i=1}^d|x_i| = \|x\|_1\]
Consider for example the placements of the complete graph $K_2$ 
in $(\bR^2,\|\cdot\|_1)$ illustrated in Figure \ref{1NormFlex}. 
The polytope $\P$ is indicated on the left with facets labelled $F_1$ and $F_2$.
The extreme points of the polar set $\P^\triangle$ which correspond to these facets are $\hat{F}_1=e_1+e_2=(1,1)$ and $\hat{F}_2=e_1-e_2=(1,-1)$.
The first placement is well-positioned with respect to $\P$ and the rigidity matrix is,
\[ \kbordermatrix{
& p_{v,1} & p_{v,2} & & p_{w,1} & p_{w,2} \\
(vw,F_1)& 1 & 1 & \vrule & -1 & -1 }\]
This bar-joint framework has infinitesimal flex dimension $1$.
The second placement is not well-positioned and the rigidity matrix is,
\[ \kbordermatrix{
& p_{v,1} & p_{v,2} & & p_{w,1} & p_{w,2} \\
(vw,F_1)&1 & 1 & \vrule & -1 & -1 \\
(vw,F_2)& 1 & -1 & \vrule & -1 & 1} \]
As the rigidity matrix has rank $2$ this bar-joint framework  
is infinitesimally rigid in $(\mathbb{R}^2,\|\cdot\|_1)$, but continuously flexible. 
\end{example}

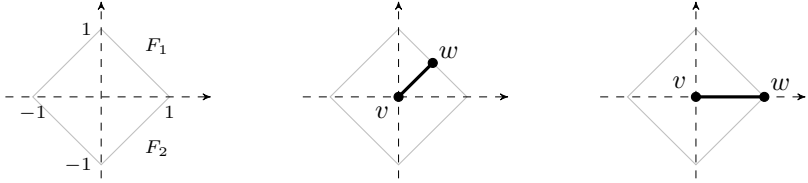
\begin{figure}[h]
\centering
  \begin{tabular}{  c   }
  
    \begin{minipage}{.3\textwidth}
    \begin{tikzpicture}[scale=0.9, axis/.style={very thin,dashed, ->, >=stealth'},
    important line/.style={very thick},
    dashed line/.style={dashed,  thick},
    pile/.style={thick, ->, >=stealth', shorten <=2pt, shorten
    >=2pt},
    every node/.style={color=black}]
 
  \clip (-2,-1.3) rectangle (2cm,1.5cm); 
  
  \coordinate (A1) at (-1,0);
  \coordinate (A2) at (0,0);
  \coordinate (A3) at (1,0);
  \coordinate (A4) at (0,1);
  \coordinate (A5) at (0,-1); 

  \draw[lightgray] (A3) -- (A4) -- (A1) -- (A5) -- cycle;
 
   \draw[axis] (-1.4,0)  -- (1.6,0) node(xline)[right]
        {$ $};
   \draw[axis] (0,-1.2) -- (0,1.4) node(yline)[above] {$ $};
	




   \node[above right] at (0.5,0.5)  {\tiny $F_1$};
  \node[below right] at (0.5,-0.5)  {\tiny $F_2$};	

\node[below] at (A1)  {\tiny $-1$};
  \node[below] at (A3)  {\tiny $1$};	
  \node[left] at (A4)  {\tiny $1$};	
  \node[left] at (A5)  {\tiny $-1$};	
\end{tikzpicture}

\end{minipage}

  \begin{minipage}{.3\textwidth}
    \begin{tikzpicture}[scale=0.9, axis/.style={very thin,dashed, ->, >=stealth'},
    important line/.style={very thick},
    dashed line/.style={dashed,  thick},
    pile/.style={thick, ->, >=stealth', shorten <=2pt, shorten
    >=2pt},
    every node/.style={color=black}]
 
  \clip (-2,-1.3) rectangle (2cm,1.5cm); 
  
  \coordinate (A1) at (-1,0);
  \coordinate (A2) at (0,0);
  \coordinate (A3) at (1,0);
  \coordinate (A4) at (0,1);
  \coordinate (A5) at (0,-1); 

 \draw[lightgray] (A3) -- (A4) -- (A1) -- (A5) -- cycle;
 
   \draw[axis] (-1.4,0)  -- (1.6,0) node(xline)[right]
        {$ $};
   \draw[axis] (0,-1.2) -- (0,1.4) node(yline)[above] {$ $};
	
   \draw[important line] (0,0) coordinate (A) -- (0.5,0.5)
        coordinate (B) node[right, text width=5em] {$  $};


   \node[draw,circle,inner sep=1.2pt,fill] at (A2) {};
   \node[draw,circle,inner sep=1.2pt,fill] at (0.5,0.5) {};

  \node[right] at (0.45,0.65)  {\small $w$};	
  \node[below left] at (0,0)  {\small $v$};	

\end{tikzpicture}

\end{minipage}

\begin{minipage}{.3\textwidth}
    \begin{tikzpicture}[scale=0.9, axis/.style={very thin,dashed, ->, >=stealth'},
    important line/.style={very thick},
    dashed line/.style={dashed,  thick},
    pile/.style={thick, ->, >=stealth', shorten <=2pt, shorten
    >=2pt},
    every node/.style={color=black}]
 
  \clip (-2,-1.3) rectangle (2cm,1.5cm); 
  
  \coordinate (A1) at (-1,0);
  \coordinate (A2) at (0,0);
  \coordinate (A3) at (1,0);
  \coordinate (A4) at (0,1);
  \coordinate (A5) at (0,-1); 

  \draw[lightgray] (A3) -- (A4) -- (A1) -- (A5) -- cycle;

  \draw[axis] (-1.4,0)  -- (1.6,0) node(xline)[right]
        {$ $};
  \draw[axis] (0,-1.2) -- (0,1.4) node(yline)[above] {$ $};
	
  \draw[important line] (0,0) coordinate (A) -- (1,0)
        coordinate (B) node[right, text width=5em] {$ $};


  \node[draw,circle,inner sep=1.2pt,fill] at (A2) {};
  \node[draw,circle,inner sep=1.2pt,fill] at (A3) {};

  \node[right] at (0.95,0.19)  {\small $w$};	
  \node[above left] at (0,0)  {\small $v$};	


\end{tikzpicture}

\end{minipage}
\end{tabular}
  \caption{An infinitesimally flexible  and  an infinitesimally rigid placement  of $K_2$ in $(\mathbb{R}^2,\|\cdot\|_1)$.}
\label{1NormFlex}
\end{figure}

\section{Edge-labellings and monochrome subgraphs}
\label{Edge}
In this section we describe an edge-labelling on $G$ which depends on the placement of the bar-joint framework $(G,p)$  in $(\mathbb{R}^d,\|\cdot\|_\mathcal{P})$ relative to the facets of $\P$. 
We provide methods for identifying infinitesimally flexible frameworks and subframeworks which are relatively infinitesimally rigid. We then characterise infinitesimal rigidity for bar-joint frameworks with $d$ framework colours  in terms of the monochrome subgraphs induced by this edge-labelling.  

\subsection{Edge-labellings}
Let $(G,p)$ be a general bar-joint framework in $(\bR^d,\|\cdot\|_\P)$.
Since $\P$ is symmetric in $\bR^d$, if $F$ is a facet of $\P$ then $-F$ is also a facet of $\P$.
Denote by $\Phi(\P)$ the collection of all pairs $[F]=\{F,-F\}$.
For each edge $vw\in E(G)$ define
\[\Phi(vw)=\{[F]\in\Phi(\P):p_v-p_w\in \cone(F)\cup \cone(-F)\}\]
We refer to the elements of $\Phi(vw)$ as the {\em framework colours} of the edge $vw$.
For example, if $p_v-p_w$ lies in the conical hull of exactly one facet of $\P$ then the edge $vw$ has just one framework colour.
If $p_v-p_w$ lies along a ray through an extreme point of $\P$ then $vw$ has at least $d$ distinct framework colours. 
By Lemma \ref{SuppLemma}, $[F]$ is a framework colour for an edge $vw$ if and only if either $\varphi_F$ or $-\varphi_F$ is a support functional for $\frac{p_v-p_w}{\|p_v-p_w\|_\P}$.

For each vertex $v_0\in V(G)$ denote by $\Phi(v_0)$ the collection of framework colours of all edges which are incident with $v_0$,  
\[\Phi(v_0)=\bigcup_{v_0w\in E(G)} \Phi(v_0w)\]


\begin{proposition}
\label{Vertex}
 If $(G,p)$ is an infinitesimally rigid bar-joint framework 
in $(\mathbb{R}^d,\|\cdot\|_\mathcal{P})$ then $|\Phi(v)|\geq d$
for each vertex $v\in V(G)$.
\end{proposition}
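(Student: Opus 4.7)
The plan is to argue by contradiction: if some vertex $v_0$ has strictly fewer than $d$ framework colours, I will construct a non-trivial infinitesimal flex supported on $v_0$ alone, contradicting infinitesimal rigidity. The key input is Theorem \ref{flex1}, which reduces the flex condition to linear conditions built from the functionals $\varphi_F$.

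Suppose $|\Phi(v_0)|\leq d-1$. For each colour $[F]\in\Phi(v_0)$, the two functionals $\varphi_F$ and $\varphi_{-F}=-\varphi_F$ have the same kernel, so each element of $\Phi(v_0)$ contributes a single hyperplane $\ker\varphi_F$ in $\bR^d$. Intersecting at most $d-1$ hyperplanes leaves a subspace of dimension at least $1$, so I can pick a non-zero vector
\[
a\in\bigcap_{[F]\in\Phi(v_0)}\ker\varphi_F.
\]
Now define $u:V(G)\to\bR^d$ by $u_{v_0}=a$ and $u_v=0$ for all $v\neq v_0$.

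To see that $u$ is an infinitesimal flex, I apply Theorem \ref{flex1}. For any edge $vw\in E(G)$ not incident to $v_0$, we have $u_v-u_w=0$, which lies in every kernel trivially. For an edge $v_0w$, the difference $u_{v_0}-u_w=a$, and I must check $\varphi_F(a)=0$ for every $\varphi_F\in\supp_\Phi(v_0w)$. By Lemma \ref{SuppLemma}, such a $\varphi_F$ arises from a facet $F$ with $p_{v_0}-p_w\in\cone(F)$, so $[F]\in\Phi(v_0w)\subseteq\Phi(v_0)$, and then $a\in\ker\varphi_F$ by construction. Hence $u\in\F(G,p)$.

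The final step is to check non-triviality, which will contradict infinitesimal rigidity. Since $(G,p)$ is infinitesimally rigid, $G$ cannot consist of an isolated vertex together with any other vertex (an isolated vertex would itself give a non-trivial flex under a constant-zero extension), so we may assume $|V(G)|\geq 2$ and there exists some $w\neq v_0$. Then $u_w=0\neq a=u_{v_0}$, so $u$ is non-constant and therefore not a trivial infinitesimal flex, yielding the required contradiction. The one place that needs a touch of care is the boundary case where $G$ has only the single vertex $v_0$; here the statement is either interpreted vacuously or ruled out by convention for bar-joint frameworks, and does not affect the argument.
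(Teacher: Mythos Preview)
Your proof is correct and follows essentially the same approach as the paper: pick a non-zero vector in the intersection of the (at most $d-1$) kernels $\ker\varphi_F$ for $[F]\in\Phi(v_0)$, and use it to build a non-trivial infinitesimal flex supported only at $v_0$ via Theorem~\ref{flex1}. You supply more verification detail than the paper (and correctly flag the degenerate single-vertex case, which the paper silently ignores), but the argument is the same.
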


\begin{proof}
 If $v_0\in V(G)$ and $|\Phi(v_0)|< d$ then there exists non-zero 
\[x\in \bigcap_{[F]\in \Phi(v_0)} \ker \varphi_{F}\]
By Theorem \ref{flex1}, if $u:V(G)\to\bR^d$ is defined by
\[u_v = \left\{ \begin{array}{ll}
x & \mbox{ if }v=v_0 \\
0 & \mbox{ if }v\not=v_0
\end{array}\right. \]
then $u$ is a non-trivial infinitesimal flex of $(G,p)$.
\end{proof}

We now consider the subgraphs of $G$ which are spanned by edges possessing a particular framework colour. For each facet $F$ of $\P$ define
\[E_F(G,p)=\{vw\in E(G):[F]\in\Phi(vw)\}\]
and let $G_F$ be the subgraph of $G$ spanned by $E_F(G,p)$.
We refer to  $G_F$ as a {\em monochrome subgraph} of $G$.

\begin{example}
Let $\P$ be a hypercube in $\bR^d$ with $2^d$ many extreme points
$ext(\P)=\{\sum_{k=1}^d (-1)^{i_k}e_{k}:i_1,\ldots,i_d\in \{0,1\}\}$. 
Then each facet $F$ contains $2^{d-1}$ extreme points of $\P$ each of Euclidean norm $\sqrt{d}$.
Among these extreme points there exist $d$ which are pairwise orthogonal $y_1,y_2,\ldots,y_d$. Thus by (\ref{RigMatrixEntries}), $\hat{F} = \frac{1}{d}(\sum_{j=1}^d y_j)=\pm e_k$ for some $k$.
The resulting polyhedral norm is the maximum norm,
\[\|x\|_\P=\max_{y\in ext(\P^\triangle)} x\cdot y=\max_{k=1,2,\ldots,d}  |x_i| =  \|x\|_\infty\]
For example, consider the placement $p$ of the complete graph $K_3$ in $(\bR^2,\|\cdot\|_\infty)$ illustrated in Figure \ref{K3Fig}. The polytope $\P$ is  indicated on the left with facets labelled $F_1$ and $F_2$.
This bar-joint framework is well-positioned with respect to $\P$ as each edge has exactly one framework colour, 
\[\Phi(ab) = [F_1],\,\,\,\, \,\,\,\,\,\Phi(ac) = [F_2],\,\, \,\,\,\,\,\,\,
\Phi(bc) = [F_2]\]
The monochrome subgraphs $G_{F_1}$ and  $G_{F_2}$ are indicated in black and gray respectively. The corresponding extreme points of $\P^\triangle$ are
$\hat{F}_1=(1,0)$ and $\hat{F}_2=(0,1)$.
The rigidity matrix has rank $3$ and so $(K_3,p)$ has infinitesimal flex dimension $1$. 
The edges which are incident with the vertex $c$ each have framework colour $[F_2]$ and so a non-trivial infinitesimal flex of $(K_3,p)$ may be obtained as in the proof of Proposition \ref{Vertex}. 
\end{example}

\begin{figure}[ht]
  \centering
 \begin{tabular}{  c   }
 \begin{minipage}{.22\textwidth}
\begin{tikzpicture}[scale=0.65, axis/.style={very thin,dashed, ->, >=stealth'},
    important line/.style={very thick},
    dashed line/.style={dashed,  thick},
    pile/.style={thick, ->, >=stealth', shorten <=2pt, shorten
    >=2pt},
    every node/.style={color=black}]
 
  \clip (-2,-1.4) rectangle (2cm,1.6cm); 
  
  \coordinate (A1) at (-1,1);
  \coordinate (A2) at (0,0);
  \coordinate (A3) at (1,-1);
  \coordinate (A4) at (1,1);
  \coordinate (A5) at (-1,-1); 

  \draw[lightgray] (A3) -- (A4) -- (A1) -- (A5) -- cycle;
    
  \draw[axis] (-1.7,0)  -- (1.8,0) node(xline)[right]
        {$ $};
  \draw[axis] (0,-1.3) -- (0,1.5) node(yline)[above] {$ $};

  \draw[dashed] (0,-1.3) -- (0,1.5);

  \node[below left] at (-1,0)  {\tiny $-1$};
  \node[below right] at (1,0)  {\tiny $1$};	

  \node[above left] at (0,1)  {\tiny $1$};	
  \node[below left] at (0,-1)  {\tiny $-1$};	

  \node[above right] at (0.2,1)  {\tiny $F_2$};	
  \node[above right] at (1,0.1)  {\tiny $F_1$};	
\end{tikzpicture}
\end{minipage}

\begin{minipage}{.32\textwidth}
  \begin{tikzpicture}[scale=0.55]
 
    \clip (-3.6,-0.38) rectangle (3.2cm,2.9cm); 
  
  \coordinate (Aone) at (-1,0);
  \coordinate (Athree) at (1,0);
  \coordinate (Afour) at (0,2);

 \draw[thick] (Aone)  -- (Athree);
 \draw[thick,lightgray] (Athree)  -- (Afour);
 \draw[thick,lightgray] (Aone)  --  (Afour);	

  \node[draw,circle,inner sep=1.4pt,fill] at (Aone) {};

  \node[draw,circle,inner sep=1.4pt,fill] at (Athree) {};
  \node[draw,circle,inner sep=1.4pt,fill] at (Afour) {};

  \node[left] at (Aone) {\small $a(-1,0)$};
  \node[left] at (Afour) {\small $c(0,2)$};
  \node[right] at (Athree) {\small $b(1,0)$};
  \end{tikzpicture}
\end{minipage}

\begin{minipage}{.40\textwidth}
{\small $ \kbordermatrix{
 & a_x&   a_y\,\,& b_x & b_y   \,\,&  c_x & c_y \\
ab& 1  & 0\,\, \vrule  &-1 & 0 \,\, \vrule & 0 & 0 \\
bc& 0 & 0 \,\, \vrule & 0 & 1 \,\, \vrule &  0 & -1\\
ac& 0 & 1 \,\, \vrule & 0 & 0 \,\, \vrule & 0 & -1
}$}
\end{minipage}
\end{tabular}

 \caption{A placement and rigidity matrix for $K_3$ in $(\bR^2,\|\cdot\|_\infty)$}
 \label{K3Fig}
\end{figure}
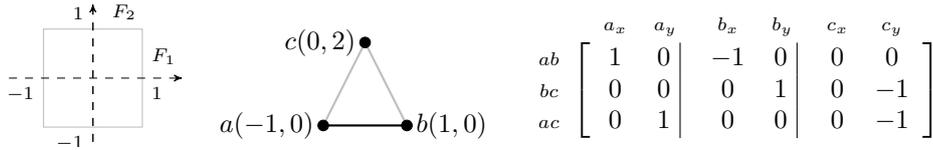




Denote by $\Phi(G,p)$ the collection of all framework colours of edges of $G$,
\[\Phi(G,p)=\bigcup_{vw\in E(G)} \Phi(vw)\]
We refer to the elements of $\Phi(G,p)$ as the {\em framework colours} of the bar-joint framework.
 $(G,p)$
\begin{proposition}
\label{Polytope2}
 Let $(G,p)$ be an  infinitesimally rigid bar-joint framework in $(\mathbb{R}^d,\|\cdot\|_\mathcal{P})$.
 If $C$ is a collection of framework colours of $(G,p)$ with 
$|\Phi(G,p)\backslash C|<d$  then \[\bigcup_{[F]\in C}G_F\]
  contains a spanning tree of $G$.
\end{proposition}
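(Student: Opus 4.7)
\medskip

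\textbf{Proposal.} The plan is to argue by contrapositive: assume that $H:=\bigcup_{[F]\in C} G_F$ fails to contain a spanning tree of $G$, and construct a non-trivial infinitesimal flex of $(G,p)$ along the lines of the proof of Proposition~\ref{Vertex}.

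First I would extract a vertex partition from the failure hypothesis. If $H$ does not contain a spanning tree of $G$, then either $V(H)\subsetneq V(G)$ or $H$ is disconnected as a subgraph of $G$. In either case there is a partition $V(G)=V_1\sqcup V_2$ with $V_1,V_2$ both non-empty such that no edge of $H$ joins a vertex of $V_1$ to a vertex of $V_2$ (take an isolated vertex versus the rest, or a connected component of $H$ versus its complement). By the definition of $H$, every edge $vw\in E(G)$ with $v\in V_1$ and $w\in V_2$ must satisfy $\Phi(vw)\subseteq \Phi(G,p)\setminus C$, since otherwise $vw$ would lie in some $G_F$ with $[F]\in C$.

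Next I would use the hypothesis $|\Phi(G,p)\setminus C|<d$ to find a non-zero vector annihilated by all the relevant support functionals. Specifically, the intersection
\[\bigcap_{[F]\in\Phi(G,p)\setminus C}\ker\varphi_F\]
is the intersection of strictly fewer than $d$ hyperplanes in $\bR^d$, hence contains a non-zero vector $x$. Define $u:V(G)\to\bR^d$ by $u_v=0$ for $v\in V_1$ and $u_v=x$ for $v\in V_2$. For any edge $vw$ with both endpoints in the same part, $u_v-u_w=0$ automatically lies in every $\ker\varphi_F$, while for an edge crossing the partition $u_v-u_w=\pm x$ lies in $\ker\varphi_F$ for every $[F]\in\Phi(vw)\subseteq\Phi(G,p)\setminus C$ by the choice of $x$. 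Hence Theorem~\ref{flex1} (noting that $\varphi_{-F}=-\varphi_F$ share the same kernel, so it is enough to check one representative per class $[F]$) shows that $u$ is an infinitesimal flex of $(G,p)$.

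Finally, since $V_1$ and $V_2$ are both non-empty and $x\neq 0$, the map $u$ is not constant and hence not a trivial infinitesimal flex. This contradicts the assumed infinitesimal rigidity of $(G,p)$, completing the proof. The only delicate point is step one, where one must be careful to allow $V(H)$ to be a proper subset of $V(G)$; this is handled transparently by placing any vertex missed by $H$ into a singleton part of the partition.
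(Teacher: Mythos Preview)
Your proof is correct and follows essentially the same route as the paper's: argue by contrapositive, extract a vertex bipartition $V(G)=V_1\cup V_2$ with no $C$-coloured crossing edges, pick a non-zero $x\in\bigcap_{[F]\in\Phi(G,p)\setminus C}\ker\varphi_F$ using the dimension bound, and define a two-valued flex via Theorem~\ref{flex1}. Your write-up is in fact slightly more explicit than the paper's about why the partition exists and why crossing edges have colours only in $\Phi(G,p)\setminus C$.
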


\begin{proof}
Suppose that $\bigcup_{[F]\in C}G_F$ does not contain a spanning tree of $G$.
Then there exists a partition $V(G) = V_1 \cup V_2$ for which there is
no edge $v_1v_2\in E(G)$ with framework colour contained in $C$ satisfying $v_1\in V_1$ and  $v_2\in V_2$. 
Since $|\Phi(G,p)\backslash C|<d$ there exists non-zero
\[x\in\bigcap_{[F]\in\Phi(G,p)\backslash C} \ker \varphi_{F}\] 
By Theorem \ref{flex1}, if $u:V(G)\to\bR^d$ is defined by
\[u_v = \left\{ \begin{array}{ll}
x & \mbox{ if }v\in V_1 \\
0 & \mbox{ if }v\in V_2
\end{array}\right. \]
then $u$ is a non-trivial infinitesimal flex of $(G,p)$.
We conclude that $\bigcup_{[F]\in C}G_F$ contains a spanning tree of $G$.

\end{proof}

The converse  to Proposition \ref{Polytope2} does not hold in general as the following example illustrates. In Theorem \ref{RigThm} we show that a converse statement
does  hold under  the additional assumption that $|\Phi(G,p)|=d$.

\begin{example}
A norm on $\bR^d$ is {\em additive} if there exists a finite set $B\subset \bR^d$ such that
\[\|x\|_\P=\sum_{b\in B} |x\cdot b|, \,\,\,\,\,\, \forall \,\, x\in\bR^d\]
Every  norm of this type is a polyhedral norm. 
If $F$ is a facet of the closed unit ball and  $x$ is an interior point of the conical hull of $F$ then
\[\hat{F} = \sum_{b\in B}\sgn(x\cdot b)b\]
Consider the polyhedral norm on $\bR^2$ given by $\|x\|_\P=|x\cdot b_1|+|x\cdot b_2| + |x\cdot b_3|$ where $b_1=(1,0)$, $b_2=(0,1)$ and $b_3=(1,1)$.
Let $(K_3,p)$ be the bar-joint framework in $(\bR^2,\|\cdot\|_\P)$ which is illustrated in Figure \ref{Additive}. The monochrome subgraphs corresponding to the facets $F_1$, $F_2$ and $F_3$ 
are indicated by black, gray and dashed lines respectively.
The rigidity matrix is
\[R_\P(G,p)= \kbordermatrix{
&& a_x && a_y &&&& b_x && b_y &&&  &  c_x && c_y& \\
(ab,F_1)&& -2  && -2 & &\vrule& & 2& & 2 & &\vrule& & 0 && 0& \\
(bc,F_3)&& 0 && 0 && \vrule && 2 && 0 && \vrule& &  -2 && 0&\\
(ac,F_2)& &0 && -2 && \vrule && 0 && 0 && \vrule && 0 && 2&
}\]
Note that if $C$ is any collection of at least two framework colours then  $|\Phi(G,p)\backslash C|<2$ and $\bigcup_{[F]\in C}\,G_F$ contains a spanning tree of $G$.
However, the rigidity matrix has rank $3$ and so the infinitesimal flex dimension of $(K_3,p)$ is $1$.
\end{example}

 \begin{figure}[ht]
\centering
  \begin{tabular}{  c   }
  
 \begin{minipage}{.45\textwidth}
\begin{tikzpicture}[scale=0.9, axis/.style={very thin,dashed, ->, >=stealth'},
    important line/.style={very thick},
    dashed line/.style={dashed,  thick},
    pile/.style={thick, ->, >=stealth', shorten <=2pt, shorten
    >=2pt},
    every node/.style={color=black}]
 
  \clip (-3.5,-1.5) rectangle (2cm,1.5cm); 
  
  \coordinate (A1) at (1,0);
  \coordinate (A2) at (0,1);
  \coordinate (A3) at (-1,1);
  \coordinate (A4) at (-1,0);
  \coordinate (A5) at (0,-1); 
  \coordinate (A6) at (1,-1);
  \coordinate (A7) at (1,0); 

  \draw[lightgray] (A1) -- (A2) -- (A3) -- (A4) -- (A5) -- (A6) -- (A7) -- cycle;
    
  \draw[axis] (-1.7,0)  -- (1.8,0) node(xline)[right]
        {$ $};
  \draw[axis] (0,-1.3) -- (0,1.5) node(yline)[above] {$ $};

  \draw[dashed] (0,-1.3) -- (0,1.5);

  \node[below ] at (-1.1,0)  {\tiny $-\frac{1}{2}$};
  \node[below right] at (0.9,0)  {\tiny $\frac{1}{2}$};	

  \node[above left] at (0.05,1)  {\tiny $\frac{1}{2}$};	
  \node[below left] at (0,-0.8)  {\tiny $-\frac{1}{2}$};	

  \node[above] at (-0.7,1)  {\tiny $F_2$};	
  \node[above right] at (0.45,0.45)  {\tiny $F_1$};	
  \node[left] at (-1,0.5)  {\tiny $F_3$};	
  
\end{tikzpicture}
\end{minipage}

    \begin{minipage}{.45\textwidth}
  \begin{tikzpicture}[scale=0.6]
 
    \clip (-3.4,-0.8) rectangle (3.8cm,3.6cm); 
  
  \coordinate (A1) at (0,0);
  \coordinate (A2) at (2,2);
  \coordinate (A3) at (-1,3);
   
 \draw[thick] (A1) -- (A2);
 \draw[thick, dashed]  (A2) -- (A3);	
 \draw[thick, lightgray]  (A3) -- (A1);

  \node[draw,circle,inner sep=1.1pt,fill] at (A1) {};
  \node[draw,circle,inner sep=1.1pt,fill] at (A2) {};
  \node[draw,circle,inner sep=1.1pt,fill] at (A3) {};
  
  \node[below] at (A1)  {\small $a(0,0)$};	
  \node[right] at (A2)  {\small $b(2,2)$};	
  \node[left] at (A3)  {\small $c(-1,3)$};
  
  \end{tikzpicture}
\end{minipage}
\end{tabular}

\caption{The unit ball of a polyhedral norm and a bar-joint framework with three induced monochrome subgraphs.}
\label{Additive}
 \end{figure}
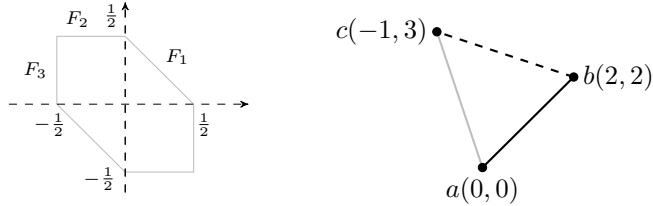

\subsection{Edge-labelled paths and relative infinitesimal rigidity}

For each edge $vw\in E(G)$ let $X_{vw}$ be the vector subspace of $\bR^d$,
\[X_{vw} = \bigcap_{\varphi_F\in\supp_{\Phi}(vw)} \ker \varphi_F
= \bigcap_{[F]\in\Phi(vw)} \ker \varphi_F\]
If $\gamma=\{v_1v_2,v_2v_3,\ldots,v_{n-1}v_n\}$
is a path in $G$ from a vertex $v_1$ to a vertex $v_n$ then we define,
 \[X_{\gamma} =  X_{v_1v_2}+X_{v_2v_3}+\cdots+X_{v_{n-1}v_n}\]
For each pair of vertices $v,w\in V(G)$ denote by $\Gamma_G(v,w)$  the set of all paths 
$\gamma$ in $G$ from $v$ to $w$.



A subframework of $(G,p)$ is a bar-joint framework $(H,p)$ obtained by restricting $p$ to the vertex set of a subgraph $H$.
We say that $(H,p)$ is {\em relatively infinitesimally rigid} in $(G,p)$ if the restriction of every infinitesimal flex of $(G,p)$ to $(H,p)$ is trivial.


\begin{proposition}
\label{RelRigid}
Let $(G,p)$ be a  bar-joint framework in $(\mathbb{R}^d,\|\cdot\|_\P)$ 
and let $(H,p)$ be a subframework of $(G,p)$.
If  for each pair of vertices $v,w\in V(H)$, \[\bigcap_{\gamma\in \Gamma_G(v,w)} X_{\gamma} = \{0\}\]
 then 
$(H,p)$ is relatively infinitesimally rigid in $(G,p)$.
\end{proposition}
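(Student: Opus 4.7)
The plan is to exploit the telescoping structure of an infinitesimal flex along paths in $G$, combined with the characterisation in Theorem \ref{flex1}.

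First, let $u : V(G) \to \bR^d$ be an arbitrary infinitesimal flex of $(G,p)$. By Theorem \ref{flex1}, for each edge $vw \in E(G)$ we have $u_v - u_w \in X_{vw}$. Given any path $\gamma = \{v_1v_2, v_2v_3, \ldots, v_{n-1}v_n\} \in \Gamma_G(v_1,v_n)$, the telescoping identity
\[
u_{v_1} - u_{v_n} \;=\; \sum_{k=1}^{n-1} (u_{v_k} - u_{v_{k+1}})
\]
places $u_{v_1} - u_{v_n}$ in the sum $X_{v_1v_2} + X_{v_2v_3} + \cdots + X_{v_{n-1}v_n} = X_\gamma$.

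Next, for any pair of vertices $v,w \in V(H)$, apply this observation over every path in $\Gamma_G(v,w)$. Since $u_v - u_w$ lies in $X_\gamma$ for each such $\gamma$, we conclude
\[
u_v - u_w \;\in\; \bigcap_{\gamma \in \Gamma_G(v,w)} X_\gamma \;=\; \{0\},
\]
where the last equality is the hypothesis. (Note the hypothesis implicitly forces $\Gamma_G(v,w) \neq \emptyset$, since otherwise the indexed intersection would be all of $\bR^d$.) Hence $u_v = u_w$ for all $v,w \in V(H)$, so the restriction $u|_{V(H)}$ is a constant map, which is precisely the condition for a trivial infinitesimal flex of $(H,p)$.

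This argument is essentially immediate once Theorem \ref{flex1} is in hand, so I do not anticipate a genuine obstacle; the only subtlety is the bookkeeping convention for the empty intersection, which is handled by observing that the stated hypothesis implicitly requires connectivity between any two vertices of $H$ through $G$. No well-positioned assumption is needed, since the edge-subspaces $X_{vw}$ and the sums $X_\gamma$ are defined for arbitrary placements.
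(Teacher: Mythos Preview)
Your proof is correct and follows essentially the same approach as the paper: take an infinitesimal flex $u$, use Theorem \ref{flex1} to place each edge difference $u_{v_k}-u_{v_{k+1}}$ in $X_{v_kv_{k+1}}$, telescope along any path $\gamma$ to get $u_v-u_w\in X_\gamma$, intersect over all paths, and conclude constancy of $u$ on $V(H)$. Your parenthetical remark about the empty-intersection convention is a nice clarification that the paper leaves implicit.
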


\begin{proof}
Let  $u\in\F(G,p)$ be an infinitesimal flex of $(G,p)$ and let $v,w\in V(H)$.
Suppose $\gamma\in\Gamma_G(v,w)$ where 
$\gamma=\{v_1v_2,\ldots,v_{n-1}v_n\}$ is a path in $G$ with $v=v_1$ and $w=v_n$.
Then by Theorem \ref{flex1},
\[u_v-u_w=(u_{v_1}-u_{v_2})+(u_{v_2}-u_{v_3})+\cdots+(u_{v_{n-1}}-u_{v_n})\in X_{\gamma}\]
Since this holds for all paths in $\Gamma_G(v,w)$ the hypothesis implies that $u_v=u_w$.
Applying this argument to every pair of vertices in $H$ we see that the restriction of $u$ to $V(H)$ is constant and hence a trivial infinitesimal flex of $(H,p)$.
Thus $(H,p)$ is relatively infinitesimally rigid in $(G,p)$.

\end{proof}

\begin{example}
Let $(G,p)$ be the bar-joint framework in $(\bR^2,\|\cdot\|_\infty)$ indicated in Figure \ref{RelRigidPic}
and let $H$ be the subgraph of $G$ induced by the vertices $v_1,v_2,v_3$. The monochrome subgraphs $G_{F_1}$ and $G_{F_2}$ are indicated in black and gray respectively.
Each pair of vertices in $H$ is connected by a path in $G_{F_1}$ and a path in $G_{F_2}$
and so, by Proposition \ref{RelRigid}, $(H,p)$ is  relatively infinitesimally rigid in $(G,p)$.
\end{example}

 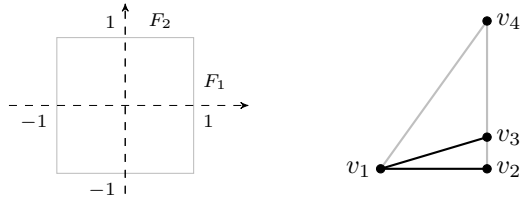
\begin{figure}[ht]
\centering
  \begin{tabular}{  c   }
  
 \begin{minipage}{.45\textwidth}
\begin{tikzpicture}[scale=0.9, axis/.style={very thin,dashed, ->, >=stealth'},
    important line/.style={very thick},
    dashed line/.style={dashed,  thick},
    pile/.style={thick, ->, >=stealth', shorten <=2pt, shorten
    >=2pt},
    every node/.style={color=black}]
 
  \clip (-3.5,-1.5) rectangle (2cm,1.5cm); 
  
  \coordinate (A1) at (-1,1);
  \coordinate (A2) at (0,0);
  \coordinate (A3) at (1,-1);
  \coordinate (A4) at (1,1);
  \coordinate (A5) at (-1,-1); 

  \draw[lightgray] (A3) -- (A4) -- (A1) -- (A5) -- cycle;
    
  \draw[axis] (-1.7,0)  -- (1.8,0) node(xline)[right]
        {$ $};
  \draw[axis] (0,-1.3) -- (0,1.5) node(yline)[above] {$ $};

  \draw[dashed] (0,-1.3) -- (0,1.5);

  \node[below left] at (-1,0)  {\tiny $-1$};
  \node[below right] at (1,0)  {\tiny $1$};	

  \node[above left] at (0,1)  {\tiny $1$};	
  \node[below left] at (0,-1)  {\tiny $-1$};	

  \node[above right] at (0.2,1)  {\tiny $F_2$};	
  \node[above right] at (1,0.1)  {\tiny $F_1$};	
\end{tikzpicture}
\end{minipage}

    \begin{minipage}{.45\textwidth}

  \begin{tikzpicture}[scale=0.7]
 
    \clip (-1,-0.6) rectangle (2.7cm,3cm); 
  
  \coordinate (A1) at (0,0);
  \coordinate (A2) at (2,0);
  \coordinate (A3) at (2,0.6);
  \coordinate (A4) at (2,2.8);

 \draw[thick] (A1) -- (A2) ;
 \draw[thick] (A1) -- (A3);	
 \draw[thick, lightgray]  (A1) -- (A4) -- (A3) -- (A2);

  \node[draw,circle,inner sep=1.1pt,fill] at (A1) {};
  \node[draw,circle,inner sep=1.1pt,fill] at (A2) {};
  \node[draw,circle,inner sep=1.1pt,fill] at (A3) {};
 \node[draw,circle,inner sep=1.1pt,fill] at (A4) {};

  \node[left] at (A1)  {\small $v_1$};
  \node[right] at (A2)  {\small $v_2$};	
  \node[right] at (A3)  {\small $v_3$};
  \node[right] at (A4)  {\small $v_4$};	
  
  \end{tikzpicture}
\end{minipage}
\end{tabular}

\caption{A relatively infinitesimally rigid subframework in $(\bR^2,\|\cdot\|_\infty)$.}
\label{RelRigidPic}
 \end{figure}

\begin{corollary}
\label{Ray}
Let $(G,p)$ be a bar-joint framework in $(\mathbb{R}^d,\|\cdot\|_\P)$ and let
$u\in \F(G,p)$ be an infinitesimal flex. 
If $vw\in E(G)$ and $p_v-p_w$ lies in a ray passing through an extreme point of $\P$ then $u_v=u_w$.
\end{corollary}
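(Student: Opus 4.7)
The plan is to exploit the polytope-polar duality together with Theorem \ref{flex1}. Since $p_v-p_w$ lies on a ray through an extreme point of $\P$, after replacing the extreme point by its negative if necessary (using that $\P$ is symmetric), we may write $p_v-p_w=\lambda y$ for some $\lambda>0$ and some $y\in ext(\P)$. The first step is to identify the framework colours of $vw$: by equivalence (\ref{norm3}), for any facet $F$ of $\P$, $y\in \cone(F)$ if and only if $y\cdot\hat{F}=\|y\|_\P=1$, which by (\ref{Facet}) is equivalent to $y\in F$. Hence $\supp_\Phi(vw)$ contains exactly the functionals $\varphi_F$ indexed by facets of $\P$ that contain $y$.

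The crux is then a counting/independence argument. Since $y$ is a vertex of the $d$-dimensional polytope $\P$, at least $d$ facets $F_1,\dots,F_k$ of $\P$ meet at $y$. The corresponding points $\hat{F}_1,\dots,\hat{F}_k\in ext(\P^\triangle)$ all satisfy $y\cdot \hat{F}_i=1$ by (\ref{Facet}), so they all lie in the affine hyperplane $H_y=\{z\in\bR^d:y\cdot z=1\}$. This hyperplane does not pass through the origin, so any $d$ affinely independent points in $H_y$ are automatically linearly independent. By the dual face correspondence, the set $\P^\triangle\cap H_y$ is a face of $\P^\triangle$ dual to the vertex $y$ of $\P$; since $\P^\triangle$ is $d$-dimensional this dual face is a facet of $\P^\triangle$, hence $(d-1)$-dimensional, and so its extreme points $\hat{F}_1,\dots,\hat{F}_k$ affinely span $H_y$. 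Extracting $d$ affinely independent ones among them yields $d$ linearly independent functionals $\varphi_{F_{i_1}},\dots,\varphi_{F_{i_d}}$ in $\supp_\Phi(vw)$.

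Applying Theorem \ref{flex1} to the edge $vw$ now gives
\[
u_v-u_w\ \in\ \bigcap_{\varphi_F\in \supp_\Phi(vw)}\ker\varphi_F\ \subseteq\ \bigcap_{j=1}^{d}\ker\varphi_{F_{i_j}}\ =\ \{0\},
\]
so $u_v=u_w$, as required.

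The main obstacle I foresee is the duality step: making precise that at a vertex of a $d$-dimensional polytope at least $d$ facets meet and that the corresponding extreme points of $\P^\triangle$ affinely span a facet of $\P^\triangle$. This is standard polytope theory (available in \cite{grun}, already cited), and once it is invoked the rest is immediate from (\ref{Facet}), (\ref{norm3}) and Theorem \ref{flex1}. Everything else is bookkeeping with the symmetry of $\P$ and the sign of $\lambda$.
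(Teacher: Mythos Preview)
Your proof is correct and follows essentially the same route as the paper. Both arguments reduce to showing that $X_{vw}=\bigcap_{\varphi_F\in\supp_\Phi(vw)}\ker\varphi_F=\{0\}$ and then invoking Theorem~\ref{flex1}; the paper packages the final step via Proposition~\ref{RelRigid} (applied to the single-edge path), whereas you apply Theorem~\ref{flex1} directly. The paper simply asserts $X_{vw}=\{0\}$, while you supply the polytope-duality justification (that the points $\hat F_i$ dual to the facets through $y$ are the vertices of a facet of $\P^\triangle$ and hence contain $d$ linearly independent vectors), which is the only nontrivial content of the corollary.
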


\begin{proof}
Let $x_0\in ext(\P)$ and suppose $p_v-p_w\in\{\lambda x_0:\lambda>0\}$.
Then \[\bigcap_{\gamma\in\Gamma_G(v,w)} X_{\gamma}\subseteq X_{vw}=
\{0\}\]
The result now follows from Proposition \ref{RelRigid}.
\end{proof}

\begin{example}
\label{RayEx}
Let $(K_{1,n},p)$ be a placement of the bipartite graph $K_{1,n}$ with edges $v_0v_1,v_0v_2,\ldots,v_0v_n$ in $(\bR^d,\|\cdot\|_\P)$ such that  $v_0$ is placed at the origin and all other vertices are placed at extreme points of $\P$. This bar-joint  framework is not well-positioned as each edge has at least $d$ distinct framework colours.
It follows from Corollary \ref{Ray} that $(K_{1,n},p)$ is infinitesimally rigid (but continuously flexible). 
Consider for example the class of polyhedral norms  on $\bR^2$ for which $\P$ is 
an $n$-gon with extreme points
$v_k = \left(\cos\left(\frac{2\pi (k-1)}{n}\right),\,\sin\left(\frac{2\pi (k-1)}{n}\right)\right)$
where $n\in2\bZ$, $n\geq 4$ and $k=1,2,\ldots,n$. 
Then $\P$ has $n$ facets $F_1,F_2,\ldots,F_n$ where $F_k$ is the closed line segment from $v_{k}$ to $v_{k+1}$.
Applying (\ref{RigMatrixEntries1}), the corresponding extreme point of the polar 
$\P^\triangle$ is 
$\hat{F_k} = \sec\left(\frac{\pi}{n}\right) \left(\cos\left(\frac{(2k-1)\pi}{n}\right), \, \sin\left(\frac{(2k-1)\pi}{n}\right)\right)$.
The case $n=8$ is illustrated in Figure \ref{RayFig} with $\P$ an octagon in $\bR^2$.
Each edge contributes two independent rows to the rigidity matrix $R_\P(K_{1,8},p)$. For example the entries for the row $v_0v_1$ are,
{\small $
 \kbordermatrix{
& v_{0,1} & v_{0,2} && v_{1,1} & v_{1,2} &  &  v_{2,1} & v_{2,2}
&    \cdots &      v_{8,1} & v_{8,2}\\
(v_0v_1,F_1)& 1  & \sqrt{2}-1 & \vrule & -1 & 1-\sqrt{2} & \vrule & 0 & 0 & \,\, \cdots \,\, &  0 & 0\\
(v_0v_1,F_8)& \sqrt{2}-1 & 1 & \vrule & 1-\sqrt{2} & -1 & \vrule &  0 & 0&  \cdots  &  0 & 0}
$}
\end{example}
In particular, the rigidity matrix for $(K_{1,8},p)$ has rank $2|E(K_{1,8})|=2|V(K_{1,8})|-2$ (cf. Proposition \ref{RigidityMatrixProp}).
\newdimen\R
\R=0.8cm

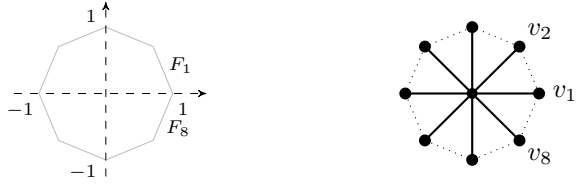
\begin{figure}[ht]
\centering
   \begin{tabular}{  c   }
 \begin{minipage}{.45\textwidth}
\begin{tikzpicture}[scale=1.1, axis/.style={very thin,dashed, ->, >=stealth'},
    important line/.style={very thick},
    dashed line/.style={dashed,  thick},
    pile/.style={thick, ->, >=stealth', shorten <=2pt, shorten
    >=2pt},
    every node/.style={color=black}]
 
  \clip (-2.5,-1.3) rectangle (2cm,1.2cm); 
  
            
        %
        \draw[lightgray] (0:\R) \foreach \x in {45,90,...,360} {
                -- (\x:\R) node {}
            } -- cycle (90:\R); 


  \draw[axis] (-1.1,0)  -- (1.2,0) node(xline)[right]
        {$ $};
  \draw[axis] (0,-1) -- (0,1.1) node(yline)[above] {$ $};

  \node[right] at (0.64,0.34) {\tiny $F_1$};
  \node[right] at (0.61,-0.42) {\tiny $F_8$};
  \node[below left] at (-0.75,0)  {\tiny $-1$};
  \node[below right] at (0.75,0)  {\tiny $1$};	

  \node[above left] at (0,0.75)  {\tiny $1$};	
  \node[below left] at (0,-0.75)  {\tiny $-1$};	

\end{tikzpicture}
\end{minipage}

    \begin{minipage}{.45\textwidth}

\begin{tikzpicture}[scale=1.1, axis/.style={very thin,dashed, ->, >=stealth'},
    important line/.style={very thick},
    dashed line/.style={dashed,  thick},
    pile/.style={thick, ->, >=stealth', shorten <=2pt, shorten
    >=2pt},
    every node/.style={color=black}]
 
  \clip (-1.6,-1.3) rectangle (2cm,1.2cm); 
  
            
        %
        \draw[dotted] (0:\R) \foreach \x in {45,90,...,360} {
                -- (\x:\R) node {}
            } -- cycle (90:\R); 
 \draw[thick,black]  \foreach \x in {0,45,90,...,359} {
               (0,0) -- (\x:\R) node[draw,circle,inner sep=1.3pt,fill] {}
            };
 \node[draw,circle,inner sep=1.3pt,fill] at (0,0) {};



  \node[right] at (0.85,0) {$v_1$};
  \node[above right] at (0.55,0.55)  {$v_2$};	
  \node[below right] at (0.55,-0.55)  {$v_8$};	

\end{tikzpicture}
\end{minipage}
\end{tabular}
 \caption{An infinitesimally rigid placement of the bipartite graph $K_{1,8}$ in $(\bR^2,\|\cdot\|_{\P})$.}
\label{RayFig}
\end{figure}

\subsection{Monochrome spanning subgraphs}
Applying the results of the previous sections we can now characterise the infinitesimally rigid bar-joint frameworks in $(\bR^d,\|\cdot\|_\P)$ which use exactly $d$ framework colours.

\begin{theorem}
\label{RigThm}
 Let $(G,p)$ be a  bar-joint framework in $(\mathbb{R}^d,\|\cdot\|_\P)$ and 
suppose that $|\Phi(G,p)|=d$. 
Then the following statements are equivalent.
\begin{enumerate}[(i)]
\item $(G,p)$ is  infinitesimally rigid.
\item $G_F$ contains a spanning tree of $G$ for each $[F]\in \Phi(G,p)$.
\end{enumerate}
\end{theorem}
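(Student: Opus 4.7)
The plan is to prove each implication separately, relying on Theorem \ref{flex1} throughout and, for the forward direction, on Proposition \ref{Polytope2}. I will write $\Phi(G,p)=\{[F_1],\ldots,[F_d]\}$ for the $d$ framework colours.

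For $(i)\Rightarrow(ii)$, I would observe that the hypothesis $|\Phi(G,p)|=d$ makes Proposition \ref{Polytope2} applicable to \emph{singletons}. Fixing any $[F_k]\in\Phi(G,p)$ and setting $C=\{[F_k]\}$ gives $|\Phi(G,p)\setminus C|=d-1<d$, so Proposition \ref{Polytope2} yields that $\bigcup_{[F]\in C}G_F=G_{F_k}$ contains a spanning tree of $G$. No further argument is needed.

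For $(ii)\Rightarrow(i)$, suppose each $G_{F_k}$ contains a spanning tree $T_k$ of $G$, and let $u\in\F(G,p)$ be an arbitrary infinitesimal flex. For any edge $vw\in E(T_k)\subseteq E(G_{F_k})$ the definition of the monochrome subgraph forces $[F_k]\in\Phi(vw)$, so Theorem \ref{flex1} gives $u_v-u_w\in X_{vw}\subseteq\ker\varphi_{F_k}$. I would then telescope along the unique $T_k$-path between any pair of vertices to extend this to $u_v-u_w\in\ker\varphi_{F_k}$ for all $v,w\in V(G)$. Letting $k$ range over $\{1,\ldots,d\}$ produces $u_v-u_w\in\bigcap_{k=1}^{d}\ker\varphi_{F_k}$ for every pair $v,w$.

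The main obstacle is then concluding that this intersection is $\{0\}$, which amounts to the extreme points $\hat{F}_1,\ldots,\hat{F}_d\in ext(\P^\triangle)$ being linearly independent. In $\bR^2$, which is the setting of interest in Section \ref{Sparsity}, this is automatic: two distinct classes $[F_1]\neq[F_2]$ correspond to extreme points of $\P^\triangle$ which are neither equal nor antipodal, and two such vectors in $\bR^2$ are necessarily linearly independent. Granting this, $u_v=u_w$ for all $v,w\in V(G)$, so $u$ is constant and hence a trivial infinitesimal flex, completing the proof of infinitesimal rigidity.
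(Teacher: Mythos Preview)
Your argument mirrors the paper's proof closely. For $(i)\Rightarrow(ii)$ both you and the paper invoke Proposition~\ref{Polytope2} with a singleton colour class $C=\{[F_k]\}$. For $(ii)\Rightarrow(i)$ the paper packages your telescoping-along-monochrome-paths step as an appeal to Proposition~\ref{RelRigid}: for every pair $v,w\in V(G)$ and every $[F]\in\Phi(G,p)$ there is a path in $G_F$ joining $v$ to $w$, whence
\[
\bigcap_{\gamma\in\Gamma_G(v,w)} X_\gamma \;\subseteq\; \bigcap_{[F]\in\Phi(G,p)}\ker\varphi_F,
\]
and the paper then asserts this last intersection is $\{0\}$. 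Your direct use of Theorem~\ref{flex1} plus telescoping yields exactly the same containment, so the two arguments are substantively identical.

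The one real divergence is at the step $\bigcap_{k=1}^{d}\ker\varphi_{F_k}=\{0\}$. The paper writes this equality without comment and proceeds; you isolate it as ``the main obstacle'', prove it only for $d=2$, and for higher $d$ resort to ``granting this''. So as written your proof is explicitly incomplete for $d\geq 3$, while the paper treats the linear independence of $\hat F_1,\ldots,\hat F_d$ as evident. You are right that two non-antipodal extreme points of a centrally symmetric polygon are automatically independent, settling $d=2$; but you have not supplied more than the paper does in higher dimensions, only been more candid about what is being assumed.
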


\begin{proof}
The implication $(i)\Rightarrow (ii)$ follows from Proposition \ref{Polytope2}.
To prove $(ii)\Rightarrow (i)$ let $u\in\F(G,p)$.
If $v,w\in V(G)$ then for each framework colour $[F]\in\Phi(G,p)$ there exists a path in $G_F$ from $v$ to $w$.
Hence \[\bigcap_{\gamma\in\Gamma_G(v,w)} X_{\gamma} 
\subseteq \bigcap_{[F]\in \Phi(G,p)} \ker \varphi_{F}=\{0\}\]
and, by Proposition \ref{RelRigid}, $u_v=u_w$.
Applying this argument to all pairs $v,w\in V(G)$ we see that $u$ is a trivial infinitesimal flex  and so  $(G,p)$ is  infinitesimally rigid.
\end{proof}

A bar-joint framework $(G,p)$ is {\em minimally infinitesimally rigid} in $(\mathbb{R}^d,\|\cdot\|_\P)$  if it is infinitesimally rigid and every subframework obtained by removing a single edge from $G$ is infinitesimally flexible. 

\begin{corollary}
\label{MinRigCor}
 Let $(G,p)$ be a  bar-joint framework in $(\mathbb{R}^d,\|\cdot\|_\P)$ and suppose that 
$|\Phi(G,p)|=d$.
If $G_F$ is a spanning tree in $G$ for each $[F]\in \Phi(G,p)$
then $(G,p)$ is minimally infinitesimally rigid.
\end{corollary}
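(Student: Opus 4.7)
The plan is to combine Theorem \ref{RigThm} in both directions: the implication $(ii)\Rightarrow(i)$ gives infinitesimal rigidity of $(G,p)$ immediately, since a spanning tree trivially contains a spanning tree. The work is in verifying that deleting any single edge destroys rigidity.

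Fix an edge $e=vw\in E(G)$ and let $H=G-e$. I want to show $(H,p)$ is infinitesimally flexible. The key geometric observation is this: for every $[F]\in\Phi(vw)$, the monochrome subgraph $G_F$ is by hypothesis a spanning tree of $G$ containing $e$, so $H_F=G_F-e$ has $|V(G)|-2$ edges and is disconnected into two non-empty components. In particular $H_F$ is not a spanning tree of $H$, so condition $(ii)$ of Theorem \ref{RigThm} fails for $(H,p)$. To conclude flexibility via the contrapositive of $(ii)\Rightarrow(i)$, I must first confirm that Theorem \ref{RigThm} still applies, i.e.\ that $|\Phi(H,p)|=d$.

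The main obstacle is precisely this last check: a priori, removing $e$ could eliminate framework colours that were witnessed only by $e$ itself. I will handle this by splitting on $|V(G)|$. When $|V(G)|\geq 3$, every spanning tree $G_F$ has at least two edges, so $H_F=G_F-e$ retains at least one edge and $[F]\in\Phi(H,p)$; hence $\Phi(H,p)=\Phi(G,p)$ has exactly $d$ elements, Theorem \ref{RigThm} applies to $(H,p)$, and the failure of condition $(ii)$ for the particular colour $[F]\in\Phi(vw)$ gives the desired non-trivial infinitesimal flex. When $|V(G)|=2$, the graph $G$ consists of the single edge $e$ (carrying all $d$ framework colours), so $H$ has no edges at all; in that case every map $u:V(H)\to\bR^d$ satisfies the flex condition \eqref{flexcondition} vacuously, and any non-constant $u$ is a non-trivial infinitesimal flex of $(H,p)$.

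Combining these two cases, every single-edge deletion produces a flexible subframework, so $(G,p)$ is minimally infinitesimally rigid.
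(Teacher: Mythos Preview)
Your proof is correct and follows the same strategy as the paper: apply Theorem~\ref{RigThm} to obtain infinitesimal rigidity, then for each edge deletion observe that some monochrome subgraph is no longer spanning and invoke Theorem~\ref{RigThm} again for flexibility. You are in fact more careful than the paper, which applies Theorem~\ref{RigThm} to $(G\setminus\{vw\},p)$ without explicitly verifying that $|\Phi(G\setminus\{vw\},p)|=d$; your case split on $|V(G)|$ fills this small gap.
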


\begin{proof}
By Theorem \ref{RigThm}, $(G,p)$ is infinitesimally rigid. If any edge $vw$ is removed from $G$ then $G_F$ is no longer a spanning tree for some $[F]\in\Phi(G,p)$. By  Theorem \ref{RigThm},
the subframework $(G\backslash \{vw\},p)$ is not infinitesimally rigid and so we conclude that  $(G,p)$ is minimally infinitesimally rigid.
\end{proof}

\begin{example}
Let $(G,p)$ be the bar-joint framework in $(\bR^2,\|\cdot\|_\infty)$ which is illustrated 
in Figure \ref{MinRigidFig}. 
This bar-joint framework is well-positioned with respect to $\P$ and 
the subgraphs $G_{F_1}$ and  $G_{F_2}$ are indicated in black and gray respectively. 
Both monochrome subgraphs are spanning trees of $G$ and so, by Corollary \ref{MinRigCor}, $(G,p)$ is minimally infinitesimally rigid.
\end{example}

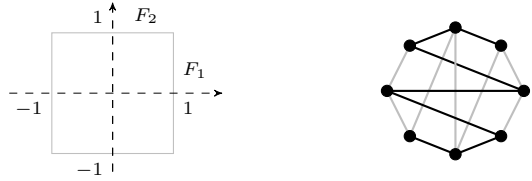
\begin{figure}[ht]
\centering
  \begin{tabular}{  c   }
  
 \begin{minipage}{.45\textwidth}
\begin{tikzpicture}[scale=0.8, axis/.style={very thin,dashed, ->, >=stealth'},
    important line/.style={very thick},
    dashed line/.style={dashed,  thick},
    pile/.style={thick, ->, >=stealth', shorten <=2pt, shorten
    >=2pt},
    every node/.style={color=black}]
 
  \clip (-3.5,-1.5) rectangle (2cm,1.5cm); 
  
  \coordinate (A1) at (-1,1);
  \coordinate (A2) at (0,0);
  \coordinate (A3) at (1,-1);
  \coordinate (A4) at (1,1);
  \coordinate (A5) at (-1,-1); 

  \draw[lightgray] (A3) -- (A4) -- (A1) -- (A5) -- cycle;
    
  \draw[axis] (-1.7,0)  -- (1.8,0) node(xline)[right]
        {$ $};
  \draw[axis] (0,-1.3) -- (0,1.5) node(yline)[above] {$ $};

  \draw[dashed] (0,-1.3) -- (0,1.5);

  \node[below left] at (-1,0)  {\tiny $-1$};
  \node[below right] at (1,0)  {\tiny $1$};	

  \node[above left] at (0,1)  {\tiny $1$};	
  \node[below left] at (0,-1)  {\tiny $-1$};	

  \node[above right] at (0.2,1)  {\tiny $F_2$};	
  \node[above right] at (1,0.1)  {\tiny $F_1$};	
\end{tikzpicture}
\end{minipage}

    \begin{minipage}{.45\textwidth}
  \begin{tikzpicture}[ scale=0.6]
 
    \clip (-1.5,-0.8) rectangle (8cm,2.7cm); 
  
  \coordinate (Aone) at (0,0);
  \coordinate (Atwo) at (1,-0.4);
  \coordinate (Athree) at (2,0);
  \coordinate (Afour) at (2.5,1);
  \coordinate (Afive) at (2,2);
  \coordinate (Asix) at (1,2.4);
  \coordinate (Aseven) at (0,2);
  \coordinate (Aeight) at (-0.5,1);

 \draw[thick] (Aone) -- (Atwo) -- (Athree);
 \draw[thick,lightgray] (Athree) -- (Afour) -- (Afive);
 \draw[thick] (Afive) -- (Asix) -- (Aseven);
 \draw[thick,lightgray] (Aseven) -- (Aeight) -- (Aone);

 \draw[thick,lightgray] (Aone) -- (Asix); 

 \draw[thick,lightgray] (Atwo) -- (Afive); 
 \draw[thick] (Afour) -- (Aseven); 

 \draw[thick,lightgray] (Atwo) -- (Asix); 
 \draw[thick] (Athree) -- (Aeight); 


 \draw[thick] (Afour) -- (Aeight);

      \node[draw,circle,inner sep=1.5pt,fill] at (Aone) {};
  \node[draw,circle,inner sep=1.5pt,fill] at (Atwo) {};
  \node[draw,circle,inner sep=1.5pt,fill] at (Athree) {};
  \node[draw,circle,inner sep=1.5pt,fill] at (Afour) {};
  \node[draw,circle,inner sep=1.5pt,fill] at (Afive) {};
  \node[draw,circle,inner sep=1.5pt,fill] at (Asix) {};
  \node[draw,circle,inner sep=1.5pt,fill] at (Aseven) {};
  \node[draw,circle,inner sep=1.5pt,fill] at (Aeight) {};

  \end{tikzpicture}
\end{minipage}
\end{tabular}

\caption{A minimally infinitesimally rigid bar-joint framework in $(\bR^2,\|\cdot\|_\infty)$}
\label{MinRigidFig}
\end{figure}

The converse statement to Corollary \ref{MinRigCor} which we now prove requires the additional assumption that $(G,p)$ is well-positioned. The necessity of this condition is demonstrated in Example \ref{3dEx}.  

\begin{corollary}
\label{MinRigCor2}
 Let $(G,p)$ be a well-positioned bar-joint framework in $(\mathbb{R}^d,\|\cdot\|_\P)$ and suppose that 
$|\Phi(G,p)|=d$.
Then the following statements are equivalent.
\begin{enumerate}[(i)]
\item $(G,p)$ is minimally infinitesimally rigid.
\item $G_F$ is a spanning tree in $G$ for each $[F]\in \Phi(G,p)$. 
\end{enumerate}
\end{corollary}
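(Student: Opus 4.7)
The implication $(ii)\Rightarrow(i)$ is exactly Corollary \ref{MinRigCor}, so only the converse needs argument. My plan is to proceed by contrapositive: assuming some monochrome subgraph $G_F$ fails to be a spanning tree, I will exhibit a single edge whose removal still satisfies the hypotheses of Theorem \ref{RigThm}, producing an infinitesimally rigid strict subframework and contradicting minimality.

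First I apply Theorem \ref{RigThm} to the infinitesimally rigid framework $(G,p)$: every $G_F$ with $[F]\in\Phi(G,p)$ contains a spanning tree of $G$, so in particular each $G_F$ is connected and spans $V(G)$. Suppose some $G_{F_0}$ is not itself a spanning tree. Being connected and spanning but not a tree, $G_{F_0}$ must contain a cycle, and I select any edge $vw$ lying on such a cycle. Here is where the well-positioned hypothesis enters: it forces $\Phi(vw)$ to be a singleton, and since $vw\in G_{F_0}$ this singleton has to be $\{[F_0]\}$.

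Next I form $G'=G\setminus\{vw\}$ and verify that $(G',p)$ satisfies the hypotheses of Theorem \ref{RigThm}. For each framework colour $[F]\neq[F_0]$ the monochrome subgraph is unchanged because $vw$ carries only the colour $[F_0]$, so $G'_F=G_F$ still contains a spanning tree of $G$. For $[F_0]$ itself, removing an edge from a cycle of the connected spanning subgraph $G_{F_0}$ leaves it connected on the full vertex set $V(G)$, hence $G'_{F_0}$ still contains a spanning tree. In particular $[F_0]$ remains a framework colour, so $\Phi(G',p)=\Phi(G,p)$ and $|\Phi(G',p)|=d$.

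Applying Theorem \ref{RigThm} to $(G',p)$ now yields infinitesimal rigidity of $(G\setminus\{vw\},p)$, which contradicts the minimal infinitesimal rigidity of $(G,p)$. I expect the main obstacle to be purely conceptual rather than technical: it is the identification $\Phi(vw)=\{[F_0]\}$, which is the one step that genuinely needs the well-positioned assumption. Without it, an edge might carry several framework colours, its removal could break several monochrome subgraphs simultaneously, and the preceding bookkeeping collapses — this is precisely the phenomenon demonstrated by Example \ref{3dEx}. Everything else in the argument is elementary graph theory (an edge on a cycle of a connected spanning subgraph can be removed without disconnecting it) and goes through unchanged whether $G$ is finite or infinite.
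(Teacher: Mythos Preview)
Your proof is correct and follows essentially the same approach as the paper: both invoke Theorem \ref{RigThm} to see that each $G_F$ contains a spanning tree, use the well-positioned hypothesis to ensure an edge carries a single framework colour, and combine this with minimality. The only difference is cosmetic phrasing --- the paper argues directly that for \emph{every} edge $vw\in G_F$, removing it makes $(G\setminus\{vw\},p)$ flexible and hence (since the other monochrome subgraphs are unaffected) $G_F\setminus\{vw\}$ fails to contain a spanning tree, so $G_F$ is itself a tree --- whereas you argue the contrapositive by locating a cycle edge whose removal preserves all spanning trees.
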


\begin{proof}
$(i)\Rightarrow (ii)$. Let $[F]\in\Phi(G,p)$. 
If $(G,p)$ is minimally infinitesimally rigid then by Theorem \ref{RigThm},
the monochrome subgraph  $G_F$ contains a spanning tree of $G$.
Suppose $vw$ is an edge of $G$ which is contained in $G_F$.
Since $(G,p)$ is minimally infinitesimally rigid, $(G\backslash \{vw\},p)$ is infinitesimally flexible.
Since $(G,p)$ is well-positioned,  $vw$ is contained in exactly one monochrome subgraph of $G$ and so $G_F$ is the only monochrome subgraph which is altered by removing the edge $vw$ from $G$.
By Theorem \ref{RigThm}, $G_F\backslash\{vw\}$ does not contain a spanning tree of $G$.
We conclude that $G_F$ is a spanning tree of $G$. 
The implication $(ii)\Rightarrow(i)$ is proved in Corollary \ref{MinRigCor}. 

\end{proof}




\begin{example}
\label{3dEx}
Let $(G,p)$ be the bar-joint framework in $(\bR^3,\|\cdot\|_\infty)$ which is illustrated in Figure \ref{3dExPic}. The polytope $\P$ is the cube with extreme points
$\pm(1,1,1)$, $\pm(1,1,-1)$, $\pm(1,-1,1)$, $\pm(-1,1,1)$
and the polyhedral norm is the maximum norm,
\[\|x\|_\P =\max_{i=1,2,3} |x_i|= \|x\|_\infty\]
This bar-joint framework is not well-positioned as each edge has two framework colours, 
\[\Phi(ab) = \{[F_1],[F_2]\},\,\,\,\, \,\,\,\,\,\Phi(ac) = \{[F_1],[F_2]\},\,\, \,\,\,\,\,\,\,
\Phi(ad) = \{[F_2],[F_3]\}\]
\[\Phi(bd) = \{[F_1],[F_3]\},\,\,\,\, \,\,\,\,\,
\Phi(cd) = \{[F_1],[F_3]\}\]
The monochrome subgraphs $G_{F_1}$, $G_{F_2}$ and $G_{F_3}$ are indicated in blue, red and green respectively and each contains a spanning tree of $G$.
Thus by Theorem \ref{RigThm}, $(G,p)$ is infinitesimally rigid. 
The corresponding extreme points of $\P^\triangle$ are $\hat{F}_1 = (1,0,0)$,
$\hat{F}_2 = (0,1,0)$ and $\hat{F}_3 = (0,0,1)$.
The rigidity matrix $R_\P(G,p)$ is 
\[ \kbordermatrix{
& a_x & a_y & a_z && b_x & b_y & b_z & &  c_x & c_y & c_z & d_x & d_y & d_z \\
(ab,F_1) &1  & 0 &0  &\vrule & -1 & 0 & 0 & \vrule & 0 & 0 & 0 &\vrule & 0 & 0 & 0 \\
(ab,F_2) &0 & 1 & 0 &\vrule & 0 & -1 & 0 & \vrule &  0 & 0 & 0&\vrule &0 & 0 & 0\\
(ac,F_1)& 1  & 0 &0& \vrule& 0 & 0 & 0& \vrule &  -1 & 0 &0& \vrule & 0 & 0 & 0\\
(ac,F_2)&0 & 1 & 0 &\vrule & 0 & 0 & 0& \vrule  & 0& -1 &0 &\vrule & 0 & 0 & 0\\
(ad,F_2)&0 & 1 & 0&\vrule  &0 & 0 & 0& \vrule & 0 & 0 & 0&\vrule &0& -1 &0\\
(ad,F_3)& 0 & 0 & 1&\vrule & 0 & 0 & 0& \vrule &  0 & 0 & 0&\vrule &0 &0& -1\\
(bd,F_1)& 0 & 0 & 0& \vrule &1  & 0 &0 & \vrule &  0 & 0 & 0&\vrule &  -1 & 0 & 0\\
(bd,F_3)&0 & 0 & 0 &\vrule & 0 & 0 & 1& \vrule&  0 & 0 & 0&\vrule & 0 &0& -1\\
(cd,F_1)& 0 & 0 & 0& \vrule & 0 & 0 & 0& \vrule & 1  & 0 &0 &\vrule&  -1 & 0 & 0\\
(cd,F_3)& 0 & 0 & 0& \vrule & 0 & 0 & 0& \vrule &0 & 0 & 1&\vrule & 0 &0& -1
}\]
The rigidity matrix has rank $3|V(G)|-3=9$. By removing the edge $ad$ the resulting rigidity matrix has rank $7$ and so the subframework $(G\backslash \{ad\},p)$ has infinitesimal flex dimension $2$. Removing any other edge results in a subframework with infinitesimal flex dimension $1$.
Hence $(G,p)$ is minimally infinitesimally rigid.
However, $G_{F_1}$ is not itself a spanning tree and this demonstrates the necessity in the hypothesis of Corollary \ref{MinRigCor2} that $(G,p)$ is well-positioned.
\end{example}

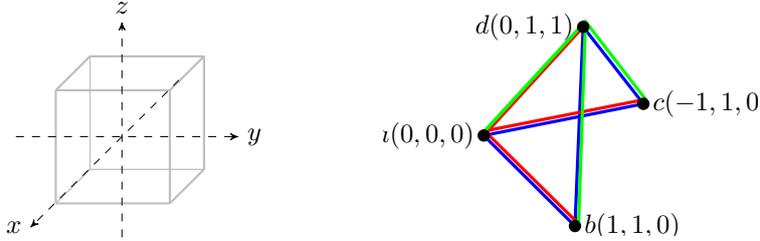
\begin{figure}[ht]
\centering
  \begin{tabular}{  c   }
  
 \begin{minipage}{.45\textwidth}
\begin{tikzpicture}[scale=1.5, axis/.style={very thin,dashed, ->, >=stealth'},
    important line/.style={very thick},
    dashed line/.style={dashed,  thick},
    pile/.style={thick, ->, >=stealth', shorten <=2pt, shorten
    >=2pt},
    every node/.style={color=black}]
 
  \clip (-1,-0.5) rectangle (1.9cm,1.9cm); 
    \coordinate (A1) at (0, 0);
    \coordinate (A2) at (0, 1);
    \coordinate (A3) at (1, 1);
    \coordinate (A4) at (1, 0);
    \coordinate (B1) at (0.3, 0.3);
    \coordinate (B2) at (0.3, 1.3);
    \coordinate (B3) at (1.3, 1.3);
    \coordinate (B4) at (1.3, 0.3);

    \draw[ thick,lightgray] (A1) -- (A2);
    \draw[thick,lightgray] (A2) -- (A3);
    \draw[ thick,lightgray] (A3) -- (A4);
    \draw[thick,lightgray] (A4) -- (A1);

    \draw[lightgray] (A1) -- (B1);
    \draw[lightgray] (B1) -- (B2);
    \draw[ thick,lightgray] (A2) -- (B2);
    \draw[ thick,lightgray] (B2) -- (B3);
    \draw[ thick,lightgray] (A3) -- (B3);
    \draw[ thick,lightgray] (A4) -- (B4);
    \draw[ thick,lightgray] (B4) -- (B3);
    \draw[lightgray] (B1) -- (B4);

    \draw[lightgray] (A1) -- (B1) -- (B4) -- (A4);
    \draw[lightgray] (A1) -- (A2) -- (A3) -- (A4);
    \draw[lightgray] (A1) -- (A2) -- (B2) -- (B1);
    \draw[lightgray] (B1) -- (B2) -- (B3) -- (B4);
    \draw[lightgray] (A3) -- (B3) -- (B4) -- (A4);
    \draw[lightgray] (A2) -- (B2) -- (B3) -- (A3);

  \draw[axis] (-0.35,0.59)  -- (1.6,0.59) node(xline)[right]
        {$y $};
  \draw[axis] (0.58,-0.3) -- (0.58,1.6) node(yline)[above] {$z $};
 \draw[axis] (1.08,1.09) -- (-0.22,-0.21) node(zline)[left] {$x $};

\end{tikzpicture}
\end{minipage}

   \begin{minipage}{.45\textwidth}
  \begin{tikzpicture}[scale=0.6]
 
    \clip (-2.2,-2.2) rectangle (6cm,2.7cm); 
  
  \coordinate (A1) at (0,0);
  \coordinate (A2) at (2,-2);
  \coordinate (A3) at (3.5,0.7);
  \coordinate (A4) at (2.18,2.4);
 
 \draw[very thick, blue] (A1) -- (A2) -- (A4) -- (A3) -- cycle;
 \draw[very thick,red] (A1) -- (A4);

  \coordinate (B1) at (0.02,0.1);
  \coordinate (B2) at (2.08,-1.95);
  \coordinate (B3) at (3.55,0.8);
  \coordinate (B4) at (2.23,2.52);
 
 \draw[very thick,green] (B1) -- (B4) -- (B3);
 \draw[very thick,red] (B1) -- (B2);
 \draw[very thick,red] (B1) -- (B3);
 \draw[very thick,green] (B1) -- (B4);
 \draw[very thick,green] (B4) -- (B2);

      \node[draw,circle,inner sep=1.5pt,fill] at (A1) {};
  \node[draw,circle,inner sep=1.5pt,fill] at (A2) {};
  \node[draw,circle,inner sep=1.5pt,fill] at (A3) {};
  \node[draw,circle,inner sep=1.5pt,fill] at (A4) {};

  \node[left] at (A1)  {\small $a(0,0,0)$};
  \node[right] at (A2)  {\small $b(1,1,0)$};	
  \node[right] at (A3)  {\small $c(-1,1,0)$};	
  \node[ left] at (A4)  {\small $d(0,1,1)$};	

  \end{tikzpicture}
\end{minipage}
\end{tabular}

 \caption{A minimally  infinitesimally rigid bar-joint framework in $(\bR^3,\|\cdot\|_{\infty})$}
\label{3dExPic}
\end{figure}

\section{An analogue of Laman's theorem}
\label{Sparsity}
In this section we address the problem of whether there exists a combinatorial description of the class of graphs for which a minimally infinitesimally rigid placement exists in $(\bR^d,\|\cdot\|_\P)$.
We restrict our attention to finite bar-joint frameworks and prove that in two dimensions such a characterisation exists (Theorem \ref{Laman}). This result is analogous to Laman's theorem \cite{Lam} for bar-joint frameworks in the Euclidean plane and extends \cite[Theorem 4.6]{kit-pow} which holds in the case where $\P$ is a quadrilateral. 
\subsection{Regular placements}
Let $\omega(G,\bR^d,\P)$ denote the set of all well-positioned  placements of a finite simple graph $G$ in $(\bR^d,\|\cdot\|_\P)$. 
A bar-joint framework $(G,p)$ is  {\em regular  in $(\mathbb{R}^d,\|\cdot\|_\P)$} if the function 
\[\omega(G,\bR^d,\P)\to\{1,2,\ldots,d|V(G)|-d\}, \,\,\,\,\,\, x\mapsto \rank R_\P(G,x)\] 
achieves its maximum value at $p$.

\begin{lemma} Let $G$ be a finite simple graph.
\begin{enumerate}[(i)]
\item The set of placements of  $G$ in $(\bR^d,\|\cdot\|_\P)$ which are both well-positioned and regular  is  an open set in $\prod_{v\in V(G)}\mathbb{R}^{d}$.
\item The set of placements of  $G$ in $(\bR^d,\|\cdot\|_\P)$ which are  well-positioned  and not regular  is  an open set in $\prod_{v\in V(G)}\mathbb{R}^{d}$.
\end{enumerate}
\end{lemma}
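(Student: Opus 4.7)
The plan is to exploit the combinatorial rigidity of the matrix $R_\P(G,x)$ in the polyhedral setting: unlike the Euclidean rigidity matrix, whose entries vary continuously with the placement, the rows of $R_\P(G,x)$ depend only on the facet assignment $vw \mapsto F_{vw}$ induced by $x$, not on the precise location of $x_v - x_w$ inside $\cone(F_{vw})$. So the key is to show that the facet assignment is locally constant at any well-positioned placement, and then conclude that the rank of the rigidity matrix is locally constant.

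First I would invoke Proposition \ref{Configuration}. Given a well-positioned $p$ with facets $F_{vw}$, that proposition supplies an open neighbourhood $U$ of $p$ in $\prod_{v\in V(G)} \bR^d$ such that every $x\in U$ is well-positioned and satisfies $x_v - x_w \in \cone(F_{vw})^\circ$ for each edge $vw \in E(G)$. By uniqueness of the facet whose cone contains a given well-positioned difference vector, the facet assignment for $x$ agrees with that for $p$ on every edge. Consequently the rigidity matrix $R_\P(G,x)$ has the same rows $(vw,F_{vw})$ as $R_\P(G,p)$ and the same entries $\pm\hat{F}_{vw}$, giving $R_\P(G,x) = R_\P(G,p)$ and hence $\rank R_\P(G,x) = \rank R_\P(G,p)$ for all $x \in U$.

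With this constancy in hand, both parts are immediate. Let $M$ denote the maximum value of $\rank R_\P(G,\cdot)$ on $\omega(G,\bR^d,\P)$. For (i), if $p$ is well-positioned and regular then $\rank R_\P(G,x) = \rank R_\P(G,p) = M$ for all $x \in U$, so every $x \in U$ is also well-positioned and regular, and $U$ witnesses openness. For (ii), if $p$ is well-positioned and not regular then $\rank R_\P(G,x) = \rank R_\P(G,p) < M$ for all $x \in U$, so every $x \in U$ is well-positioned and non-regular.

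There is no real obstacle here: the whole argument rests on the observation, peculiar to polyhedral norms, that the rigidity matrix is piecewise constant as a function of placement. The only thing to verify carefully is that Proposition \ref{Configuration} indeed forces the facet assignment to be preserved on $U$, which follows from property (i) of that proposition combined with the fact that for a well-positioned placement each edge direction lies in the interior of the cone of exactly one facet.
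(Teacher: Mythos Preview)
Your proof is correct and follows essentially the same route as the paper's own argument: invoke Proposition \ref{Configuration} to obtain an open neighbourhood $U$ on which the facet assignment, and hence the rigidity matrix $R_\P(G,\cdot)$, is constant, and then conclude that the rank is locally constant so that regularity and non-regularity each persist throughout $U$. The paper states this more tersely, but the content is the same.
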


\begin{proof}
Let $p$ be a well-positioned placement of $G$ and let $U$ be an open neighbourhood of $p$ as in the statement of Proposition \ref{Configuration}.
The matrix-valued function $x\mapsto R_\P(G,x)$ is constant on $U$ and so either $(G,x)$ is regular for all $x\in U$ or $(G,x)$ is not regular for all $x\in U$. 
\end{proof}

A finite simple graph $G$ is {\em (minimally) rigid}  in $(\bR^d,\|\cdot\|_\P)$ if 
there exists a well-positioned placement of $G$ which is (minimally) infinitesimally rigid.

\begin{example}
\label{K4Rigid}
The complete graph $K_4$ is minimally rigid in $(\bR^2,\|\cdot\|_\P)$ for every polyhedral norm $\|\cdot\|_\P$.
To see this let $F_1,F_2,\ldots,F_n$ be the facets of $\P$ and let
$x_0\in ext(\P)$ be any extreme point of $\P$. 
Then $x_0$ is contained in exactly two facets, $F_1$ and $F_2$ say.
Choose a point $x_1$ in the relative interior of $F_1$ and a point $x_2$ in the relative interior of $F_2$. 
Then by formulas (\ref{norm2}) and (\ref{norm3}),
\begin{eqnarray}
\label{K4eqn1}
\max_{k\not=1}\, (x_1\cdot \hat{F}_k) <\|x_1\|_\P= x_1\cdot \hat{F}_1=1
\end{eqnarray}
\begin{eqnarray}
\label{K4eqn2}
\max_{k\not=2}\, (x_2\cdot \hat{F}_k) < \|x_2\|_\P=x_2\cdot \hat{F}_2=1
\end{eqnarray}
Since $(x_0\cdot \hat{F}_1) = (x_0\cdot \hat{F}_2)=\|x_0\|_\P=1$, if $x_1$ and $x_2$ are chosen to lie in a sufficiently small neighbourhood of  $x_0$ then by continuity we may assume,
\begin{eqnarray}
\label{K4eqn3}
x_1\cdot\hat{F}_2=\max_{k\not=1}\, (x_1\cdot\hat{F_k})>0
\end{eqnarray}
\begin{eqnarray}
\label{K4eqn4}
x_2\cdot\hat{F}_1=\max_{k\not=2}\, (x_2\cdot\hat{F_k})>0
\end{eqnarray}
We may also assume without loss of generality that 
\begin{eqnarray}
\label{K4eqn5}
x_1\cdot\hat{F}_2 = x_2\cdot \hat{F}_1
\end{eqnarray}
Define a placement $p:V(K_4)\to\bR^2$ by setting
\[p_{v_0} = (0,0), \,\,\,\,\, p_{v_1}=x_1, \,\,\,\,\, p_{v_2} = (1-\epsilon)x_2,
\,\,\,\,\, p_{v_3}=x_1+(1+\epsilon)x_2\]
where $0<\epsilon<1$.
The edges $v_0v_1$, $v_0v_2$ and $v_1v_3$ have framework colours,
\[\Phi(v_0v_1)=[F_1],  \,\,\,\,\,\, \Phi(v_0v_2) = [F_2],\,\,\,\,\,\,\Phi(v_1v_3)=[F_2]\]
To determine the framework colours for the remaining edges we will apply the above identities together with formulas (\ref{norm2}) and (\ref{norm3}).

Consider the edge $v_2v_3$. 
If $k\not=1$ and $\epsilon$ is sufficiently small then applying (\ref{K4eqn1}), 
\[(p_{v_3}-p_{v_2})\cdot \hat{F}_k
= (x_1\cdot\hat{F}_k)+2\epsilon\, (x_2\cdot \hat{F}_k)
<1\]
Also by (\ref{K4eqn1}) and (\ref{K4eqn4}) we have,
 \[(p_{v_3}-p_{v_2})\cdot \hat{F}_1 =(x_1\cdot\hat{F}_1)+2\epsilon\, (x_2\cdot \hat{F}_1)=1+2\epsilon\, (x_2\cdot \hat{F}_1)>1 \]
We conclude that $F_1$ is the unique facet of $\P$ for which $\|p_{v_3}-p_{v_2}\|_\P=
(p_{v_3}-p_{v_2})\cdot \hat{F}_1$
and so $p_{v_3}-p_{v_2}\in\cone(F_1)^\circ$.
Thus $\Phi(v_2v_3)=[F_1]$.

Consider the edge $v_0v_3$. 
Applying (\ref{K4eqn2}) and (\ref{K4eqn3}), for $k\not=1,2$ we have, 
\[(p_{v_3}-p_{v_0})\cdot \hat{F}_k
=(x_1\cdot\hat{F}_k)+(1+\epsilon)\, (x_2\cdot \hat{F}_k)
< (x_1\cdot\hat{F}_2)+1+\epsilon\]
By applying (\ref{K4eqn5}), 
\[(p_{v_3}-p_{v_0})\cdot \hat{F}_1 = (x_1\cdot\hat{F}_1)+(1+\epsilon)(x_2\cdot\hat{F}_1)<(x_1\cdot\hat{F}_2)+1+\epsilon \]
and by (\ref{K4eqn2}),
 \[(p_{v_3}-p_{v_0})\cdot \hat{F}_2 = (x_1\cdot\hat{F}_2)+(1+\epsilon)(x_2\cdot\hat{F}_2)=(x_1\cdot\hat{F}_2)+1+\epsilon\]
Hence $F_2$ is the unique facet of $\P$  for which $\|p_{v_3}-p_{v_0}\|_\P=
(p_{v_3}-p_{v_0})\cdot \hat{F}_2$. Thus $p_{v_3}-p_{v_0}\in\cone(F_2)^\circ$
and so
$\Phi(v_0v_3)=[F_2]$.

Finally, consider the edge $v_1v_2$. 
Applying (\ref{K4eqn5}) we have, 
\[(p_{v_2}-p_{v_1})\cdot \hat{F}_2
= (1-\epsilon)(x_2\cdot\hat{F}_2) - (x_1\cdot\hat{F}_2)
=1-\epsilon - (x_2\cdot \hat{F}_1)\]
and this value is positive provided $\epsilon$ is sufficiently small.
By (\ref{K4eqn1}) we have,
\[(p_{v_2}-p_{v_1})\cdot (-\hat{F}_1)=-(1-\epsilon)(x_2\cdot\hat{F}_1)+(x_1\cdot\hat{F}_1)
=1+\epsilon(x_2\cdot\hat{F}_1)-(x_2\cdot\hat{F}_1)\]
We conclude that $(p_{v_2}-p_{v_1})\cdot(\pm \hat{F}_2)<\|p_{v_2}-p_{v_1}\|_\P$.
Hence   $p_{v_2}-p_{v_1}\notin\cone(F_2)$ and so $\Phi(v_1v_2)=[F_k]$ for some $[F_k]\not=[F_2]$.

By making a small perturbation we can assume that $p_{v_2}-p_{v_1}$ is contained in the conical hull of exactly one facet of $\P$ and so $(G,p)$ is well-positioned.
This framework colouring is illustrated in Figure \ref{K4RigidFig} with monochrome subgraphs 
$G_{F_1}$ and $G_{F_2}$  indicated in  black and  gray respectively and $G_{F_k}$ indicated by the dotted line. 

Suppose $u\in\F(K_4,p)$. To show that $u$ is a trivial infinitesimal flex we apply the method of Proposition \ref{RelRigid}.
The vertices $v_0$ and $v_1$ are joined by monochrome paths in both $G_{F_1}$ and $G_{F_2}$ and so $u_{v_0}=u_{v_1}$. The vertices $v_2$ and $v_3$ are also joined by monochrome paths in both $G_{F_1}$ and $G_{F_2}$ and so $u_{v_2}=u_{v_3}$.
The vertices $v_1$ and $v_2$ are joined by monochrome paths in $G_{F_2}$ and $G_{F_k}$ and so $u_{v_1}=u_{v_2}$. Thus $u$ is a constant and hence trivial infinitesimal flex of $(K_4,p)$.
We conclude that $(K_4,p)$, and all regular and well-positioned placements of $K_4$, are 
infinitesimally rigid.
\end{example}

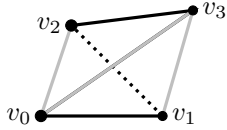
\begin{figure}[h]
\centering
  \begin{tabular}{  c   }
  
   \begin{tikzpicture}[scale=0.4, axis/.style={very thin,dashed, ->, >=stealth'},
    important line/.style={very thick},
    dashed line/.style={dashed,  thick},
    pile/.style={thick, ->, >=stealth', shorten <=2pt, shorten
    >=2pt},
    every node/.style={color=black}]
 
  \clip (-2,-0.5) rectangle (7.2cm,4cm); 
  
  \coordinate (A1) at (0,0);
  \coordinate (A2) at (4,0);
  \coordinate (A3) at (1,3);
  \coordinate (A4) at (5,3.5);

  \draw[very thick] (A3) -- (A4) -- (A1) -- (A2);
  \draw[ very thick,   dotted] (A2) -- (A3);
  \draw[very thick, lightgray] (A3) -- (A1) -- (A4) -- (A2);
    
	
  \node[draw,circle,inner sep=1.5pt,fill] at (A1) {};
  \node[draw,circle,inner sep=1.2pt,fill] at (A2) {};
  \node[draw,circle,inner sep=1.5pt,fill] at (A3) {};
  \node[draw,circle,inner sep=1.2pt,fill] at (A4) {};

  \node[ right] at (A4)  {\small $v_3$};
  \node[left] at (A3)  {\small $v_2$};	
  \node[ right] at (A2)  {\small $v_1$};	
  \node[ left] at (A1)  {\small $v_0$};	
 
\end{tikzpicture}   

  \end{tabular}
  \caption{A framework colouring for an infinitesimally rigid placement of $K_4$ in $(\bR^2,\|\cdot\|_\P)$}
\label{K4RigidFig}
\end{figure}

In Euclidean space it is often necessary to use a stronger notion of genericity for bar-joint frameworks which requires that all subframeworks of $(K_{V(G)},p)$ be regular frameworks.
Here $K_{V(G)}$ is the complete graph on the vertices of $G$. 
In the Euclidean setting (and more generally for the classical $\ell^p$ norms with $p\in(1,\infty)$),
such placements form an open and dense subset of $\prod_{v\in V(G)}\mathbb{R}^{d}$ (see for example \cite[Lemma 2.7]{kit-pow-2}).
The following example shows that in the case of polyhedral norms such placements need not exist.

\begin{example}
Consider a well-positioned placement $p$ of the complete graph $K_6$ in $(\bR^2,\|\cdot\|_\infty)$. 
The induced framework colouring of the edges of $K_6$ contains a monochrome subgraph $G_F$ which itself contains a copy of the complete graph $K_3$. The subframework $(K_3,p)$ has infinitesimal flex dimension $2$. Since the regular placements of $K_3$ have infinitesimal flex dimension $1$, $(K_3,p)$ is not regular. 
Thus there does not exist a well-positioned placement of $K_6$ in $(\bR^2,\|\cdot\|_\infty)$ for which all subframeworks are regular.
More generally, it follows from Ramsey's theorem that given any polyhedral norm on $\bR^d$ there exists a complete graph for which no such well-positioned placements exists. 
\end{example}

\subsection{Counting conditions}
The Maxwell counting conditions \cite{max} state that  a finite minimally infinitesimally rigid bar-joint  framework $(G,p)$ in Euclidean space $\bR^d$  must satisfy $|E(G)|=d|V(G)|-{d+1\choose 2}$ with inequalities $|E(H)|\leq d|V(H)|-{d+1\choose 2}$  for all subgraphs $H$.
The following  analogous statement holds for polyhedral norms.

\begin{proposition}
\label{Maxwell}
Let $(G,p)$ be a finite well-positioned bar-joint framework in $(\mathbb{R}^d,\|\cdot\|_\P)$.
If $(G,p)$ is minimally infinitesimally rigid then 
\begin{enumerate}[(i)]
\item $|E(G)|= d|V(G)|-d$, and,
\item $|E(H)|\leq d|V(H)|-d$
for all subgraphs $H$ of $G$.
\end{enumerate}
\end{proposition}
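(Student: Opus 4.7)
The plan is to argue entirely at the level of the rigidity matrix $R_\P(G,p)$, exploiting the fact that a well-positioned framework has exactly one framework colour per edge and therefore exactly one row per edge in its rigidity matrix (so $R_\P(G,p)$ has size $|E(G)|\times d|V(G)|$).

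For part (i), I would first invoke Proposition \ref{RigidityMatrixProp} to translate infinitesimal rigidity of $(G,p)$ into the rank identity $\rank R_\P(G,p)=d|V(G)|-d$, which immediately gives $|E(G)|\ge d|V(G)|-d$. The key step is then to show the reverse inequality by using minimality: if the rows of $R_\P(G,p)$ were linearly dependent, some row $(vw,F_{vw})$ would lie in the span of the others, so deleting it would not decrease the rank. Since $(G,p)$ is well-positioned the deleted row is precisely the unique row contributed by the edge $vw$, hence $R_\P(G\setminus\{vw\},p)$ would still have rank $d|V(G)|-d$, making $(G\setminus\{vw\},p)$ infinitesimally rigid by Proposition \ref{RigidityMatrixProp}. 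This contradicts minimality, so the rows of $R_\P(G,p)$ are linearly independent and $|E(G)|=\rank R_\P(G,p)=d|V(G)|-d$.

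For part (ii), I would take an arbitrary subgraph $H$ of $G$ and consider the subframework $(H,p|_{V(H)})$, which is itself well-positioned (the framework colours of its edges are inherited from $(G,p)$). The rows of $R_\P(H,p)$ are, up to the inclusion of the zero columns corresponding to vertices in $V(G)\setminus V(H)$, precisely the rows of $R_\P(G,p)$ indexed by $E(H)$. Since the full set of rows of $R_\P(G,p)$ was shown to be linearly independent in part (i), so is any subset; therefore $\rank R_\P(H,p)=|E(H)|$. Finally, the trivial infinitesimal flexes of $(H,p)$ form a $d$-dimensional subspace of $\ker R_\P(H,p)$, which yields $\rank R_\P(H,p)\le d|V(H)|-d$, and combining these gives the desired inequality.

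The only step that requires real care is the linear-independence argument: I need to know that deleting a redundant row from $R_\P(G,p)$ actually corresponds to deleting an edge of $G$, and this is exactly where the well-positioned hypothesis is used (each edge contributes a single row, and that row does not appear among the rows of any other edge). Once that is in hand, both parts of the statement follow from standard rank arithmetic and the dimension count for trivial flexes, so I do not expect any further obstacle.
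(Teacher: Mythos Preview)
Your proposal is correct and follows essentially the same approach as the paper: both arguments reduce the counting conditions to row independence of the rigidity matrix and then read off the inequalities from rank arithmetic together with Proposition~\ref{RigidityMatrixProp}. The paper's proof is terser---it simply asserts that minimal infinitesimal rigidity forces $R_\P(G,p)$ to have independent rows---whereas you explicitly supply the contradiction argument (a redundant row could be deleted without lowering the rank, violating minimality), which is exactly the content hidden behind the paper's one-line claim.
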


\begin{proof}
If $(G,p)$ is minimally infinitesimally rigid then by Proposition \ref{RigidityMatrixProp} the rigidity matrix $R_\P(G,p)$  is independent and,
\[|E(G)| = \rank R_\P(G,p) = d|V(G)|-d\]
The rigidity matrix for any subframework  of $(G,p)$ is also independent and so
\[|E(H)| = \rank R_\P(H,p) \leq d|V(H)|-d\]
for all subgraphs $H$.
\end{proof}

A graph $G$ is {\em $(d,d)$-tight} if it satisfies the counting conditions in the above proposition.
The class of $(2,2)$-tight graphs has the property that every member can be constructed from a single vertex by applying a sequence of finitely many allowable graph moves (see \cite{NOP,NOP2}). 
The allowable graph moves are:
\begin{enumerate}
\item The Henneberg 1-move (also called vertex addition, or $0$-extension).

\item The Henneberg 2-move (also called edge splitting, or $1$-extension).

\item The vertex splitting move.

\item The vertex-to-$K_4$ move.
\end{enumerate}

A Henneberg $1$-move $G\to G'$ adjoins a vertex $v_0$ to $G$ together with two edges $v_0v_1$ and $v_0v_2$ where $v_1,v_2\in V(G)$.

\begin{proposition}
\label{Graph1}
The Henneberg 1-move preserves infinitesimal rigidity for well-positioned bar-joint frameworks in $(\bR^2,\|\cdot\|_\P)$.
\end{proposition}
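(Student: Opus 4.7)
The plan is to construct a well-positioned extension $p'$ of $p$ to $G'$ so that the two new bars receive different framework colours, and then deduce infinitesimal rigidity directly from Theorem \ref{flex1}. Let $G'$ be obtained from $G$ by adjoining a new vertex $v_0$ with edges $v_0v_1, v_0v_2$, where $v_1,v_2\in V(G)$.

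First I would reduce to a convenient generic situation. Since $(G,p)$ is well-positioned and infinitesimally rigid, its rigidity matrix attains the maximum rank $2|V(G)|-2$, and by the lemma preceding this proposition the set of well-positioned placements of $G$ with this rank is open in $\prod_{v\in V(G)}\bR^2$. Within this open set, the conditions ``$p_{v_1}=p_{v_2}$'' and ``$p_{v_1}-p_{v_2}$ lies on a ray through an extreme point of $\P$'' each cut out a subset of positive codimension, so after a small perturbation I may assume that $v_1$ and $v_2$ are placed at distinct points and that $p_{v_1}-p_{v_2}\in\cone(F_2)^\circ$ for some facet $F_2$ of $\P$. Because $d=2$ and $\P$ is a symmetric convex polygon, $\P$ has at least four facets and hence at least two distinct antipodal pairs $[F]$. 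Choose a facet $F_1$ with $[F_1]\neq [F_2]$ and a direction $d\in\cone(F_1)^\circ$, and set $p'_{v_0}=p_{v_1}+td$ for sufficiently small $t>0$. By continuity, $p'_{v_0}-p_{v_1}=td\in\cone(F_1)^\circ$ and $p'_{v_0}-p_{v_2}=(p_{v_1}-p_{v_2})+td\in\cone(F_2)^\circ$, so $(G',p')$ is well-positioned with $F_{v_0v_1}=F_1$ and $F_{v_0v_2}=F_2$.

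The remaining step is to verify that $(G',p')$ is infinitesimally rigid. Given $u'\in\F(G',p')$, its restriction $u$ to $V(G)$ lies in $\F(G,p)$ by Theorem \ref{flex1}, since every edge of $G$ is also an edge of $G'$. Because $(G,p)$ is infinitesimally rigid, $u_v=c$ for some constant $c\in\bR^2$ and every $v\in V(G)$. The two new edge equations provided by Theorem \ref{flex1} read $\varphi_{F_1}(u'_{v_0}-c)=0$ and $\varphi_{F_2}(u'_{v_0}-c)=0$. Since $[F_1]\neq [F_2]$ in $\bR^2$, the extreme points $\hat{F}_1,\hat{F}_2$ of $\P^\triangle$ are linearly independent, and hence so are the functionals $\varphi_{F_1},\varphi_{F_2}$. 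This forces $u'_{v_0}=c$, so $u'$ is constant and therefore trivial, and $(G',p')$ is infinitesimally rigid.

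The main obstacle is the colour-separation arranged in the first step: polyhedral norms in $\bR^2$ can have as few as two antipodal pairs of framework colours, so one must verify that both new edges can genuinely be assigned to different pairs. This is where the preliminary perturbation of $p$ is needed, both to separate $p_{v_1}$ from $p_{v_2}$ and to place $p_{v_1}-p_{v_2}$ in a cone interior. Once this colour-separation is arranged, the rigidity argument reduces to the two-dimensional linear algebra fact that two support functionals with different framework colours form a basis of $(\bR^2)^*$, and the rest is a direct application of Theorem \ref{flex1}.
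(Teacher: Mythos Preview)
Your proof is correct and follows essentially the same route as the paper: place $v_0$ so that the two new edges $v_0v_1$ and $v_0v_2$ receive distinct framework colours $[F_1]\neq[F_2]$, then use Theorem~\ref{flex1} together with the linear independence of $\varphi_{F_1},\varphi_{F_2}$ on $\bR^2$ to force $u'_{v_0}$ to agree with the constant value of the restricted flex.

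The only difference is in how the placement of $v_0$ is arranged. The paper keeps $p$ fixed, chooses any two distinct pairs $[F_1],[F_2]\in\Phi(\P)$, and places $v_0$ directly in the intersection $(p_{v_1}\pm\cone(F_1)^\circ)\cap(p_{v_2}\pm\cone(F_2)^\circ)$. You instead first perturb $p$ within the open set of well-positioned infinitesimally rigid placements so that $p_{v_1}-p_{v_2}\in\cone(F_2)^\circ$, and then place $v_0$ close to $p_{v_1}$ in a direction from $\cone(F_1)^\circ$. Your perturbation is not strictly necessary, but it is harmless for the intended existence statement and has the minor advantage of making the nonemptiness of the target region for $p'_{v_0}$ completely transparent (a point the paper leaves implicit, and which could fail for the paper's construction in the degenerate case $p_{v_1}=p_{v_2}$ when $[F_1]$ and $[F_2]$ are chosen so that their double cones are angularly disjoint).
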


\begin{proof}
Suppose $(G,p)$ is well-positioned and infinitesimally rigid and let $G\to G'$ be a Henneberg 1-move on the vertices $v_1,v_2\in V(G)$.
Choose distinct $[F_1],[F_2]\in\Phi(\P)$  and define 
a placement $p'$ of $G'$ by  $p'_{v}=p_{v}$ for all $v\in V(G)$ and 
\[p'_{v_0}\in (p_{v_1}\pm\cone(F_1)^{\circ})\cap(p_{v_2}\pm\cone(F_2)^{\circ})\]
Then $(G',p')$ is well-positioned and the edges $v_0v_1$ and $v_0v_2$ have framework colours $[F_1]$ and $[F_2]$ respectively.
If $u\in\F(G',p')$ then the restriction of $u$ to $V(G)$ is an infinitesimal flex of $(G,p)$.
This restriction must be trivial and hence constant. In particular, $u_{v_1}=u_{v_2}$.
By Theorem \ref{flex1}, $\varphi_{F_1}(u_{v_0}-u_{v_1})=0$ and 
$\varphi_{F_2}(u_{v_0}-u_{v_1})=\varphi_{F_2}(u_{v_0}-u_{v_2})=0$
and so $u_{v_0}=u_{v_1}$. We conclude that $(G',p')$ is infinitesimally rigid.
\end{proof}

A Henneberg $2$-move $G\to G'$ removes an edge $v_1v_2$ from $G$ and adjoins a vertex $v_0$
together with three edges $v_0v_1$, $v_0v_2$ and $v_0v_3$. 

\begin{proposition}
\label{Graph2}
The Henneberg 2-move preserves infinitesimal rigidity for well-positioned bar-joint frameworks in $(\bR^2,\|\cdot\|_\P)$.
\end{proposition}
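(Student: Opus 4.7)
The plan is to adapt the proof of Proposition~\ref{Graph1} by choosing a placement that exploits the colour of the removed edge. Let $[F^*]\in\Phi(\P)$ denote the framework colour of $v_1v_2$ in $(G,p)$; after relabelling, assume $p_{v_1}-p_{v_2}\in\cone(F^*)^\circ$. The strategy is to place $v_0$ so that both new edges $v_0v_1$ and $v_0v_2$ acquire colour $[F^*]$, while $v_0v_3$ acquires some colour $[F']\neq[F^*]$. Equality of the first two colours recovers the constraint lost by deleting $v_1v_2$, and the mismatch on $v_0v_3$ pins down $v_0$.

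Given such a placement $p'$ (with $p'_v=p_v$ for $v\in V(G)$), let $u\in\F(G',p')$. The remaining edges of $G'$ are those of $G\setminus\{v_1v_2\}$, so $u|_{V(G)}$ satisfies every corresponding constraint. Theorem~\ref{flex1} applied to the edges $v_0v_1,v_0v_2$ gives
\[\varphi_{F^*}(u_{v_0}-u_{v_1})=0=\varphi_{F^*}(u_{v_0}-u_{v_2}),\]
and subtracting yields $\varphi_{F^*}(u_{v_1}-u_{v_2})=0$, precisely the constraint for the deleted edge. Hence $u|_{V(G)}\in\F(G,p)$, and by infinitesimal rigidity it equals a constant $c\in\bR^2$. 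Then $v_0v_1$ gives $\varphi_{F^*}(u_{v_0}-c)=0$ and $v_0v_3$ gives $\varphi_{F'}(u_{v_0}-c)=0$. Since $[F']\neq[F^*]$, the vectors $\hat{F^*},\hat{F'}$ are linearly independent in $\bR^2$, so $u_{v_0}=c$ and $u$ is trivial. Thus $(G',p')$ is infinitesimally rigid.

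The main obstacle is the geometric existence of $p'_{v_0}$. Setting $C=\cone(F^*)^\circ\cup\cone(-F^*)^\circ$, the first two colour conditions cut out the open set $R=(p_{v_1}+C)\cap(p_{v_2}+C)$. Using $p_{v_1}-p_{v_2}\in\cone(F^*)^\circ$, $R$ decomposes as the disjoint union of a forward cone $p_{v_1}+\cone(F^*)^\circ$, a backward cone $p_{v_2}-\cone(F^*)^\circ$, and a ``lens'' $(p_{v_1}-\cone(F^*)^\circ)\cap(p_{v_2}+\cone(F^*)^\circ)$. A brief calculation using the identity $\cone(F^*)+\cone(F^*)^\circ=\cone(F^*)^\circ$ shows that the forward cone lies inside $(p_{v_3}+\cone(F^*))\cup(p_{v_3}+\cone(-F^*))$ if and only if $p_{v_1}-p_{v_3}\in\cone(F^*)$, and symmetrically for the backward cone and $\cone(-F^*)$. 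When either inclusion fails, the relevant region contains a point $q$ with $q-p_{v_3}\notin\cone(F^*)\cup\cone(-F^*)$; when both hold, $p_{v_3}$ necessarily lies in the closure of the lens, and the open lens itself---two-dimensional and extending in directions transverse to $\cone(F^*)$---then contains a point with the same property. In every case the required conditions are open in $p'_{v_0}$, so a generic nearby choice also makes $(G',p')$ well-positioned.
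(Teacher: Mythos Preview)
Your proof is correct and follows the same strategy as the paper: place $v_0$ so that $v_0v_1$ and $v_0v_2$ inherit the colour $[F_1]$ of the deleted edge while $v_0v_3$ receives a different colour, then use Theorem~\ref{flex1} to recover the deleted constraint $\varphi_{F_1}(u_{v_1}-u_{v_2})=0$ and pin down $u_{v_0}$. The paper's geometric construction of $p'_{v_0}$ is more direct than yours: it simply takes $p'_{v_0}$ on the \emph{line} through $p_{v_1}$ and $p_{v_2}$ (which forces the colours of $v_0v_1$ and $v_0v_2$ automatically) intersected with the double cone $p_{v_3}\pm\cone(F_2)^\circ$ for a chosen $[F_2]\neq[F_1]$, thereby sidestepping your forward\,/\,backward\,/\,lens case analysis altogether.
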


\begin{proof}
Suppose $(G,p)$ is well-positioned and infinitesimally rigid and let $G\to G'$ be a Henneberg 2-move on the vertices $v_1,v_2,v_3\in V(G)$ and the edge $v_1v_2\in E(G)$. Let $[F_1]$ be the unique  framework colour for the edge $v_1v_2$ and choose any    $[F_2]\in\Phi(\P)$ with $[F_2]\not=[F_1]$.
Define  a placement $p'$ of $G'$ by  setting $p'_{v}=p_{v}$ for all $v\in V(G)$ and choosing $p'_{v_0}$ to lie on the intersection of the line through $p_{v_1}$ and $p_{v_2}$ and  the double cone
$p_{v_3}\pm\cone(F_2)^\circ$.  
Then $(G',p')$ is well-positioned. The edges $v_0v_1$ and $v_0v_2$ both have framework colour $[F_1]$ and the edge $v_0v_3$ has framework colour $[F_2]$. 
If $u\in\F(G',p')$ then by Theorem \ref{flex1}, 
\[\varphi_{F_1}(u_{v_1}-u_{v_2})=\varphi_{F_1}(u_{v_1}-u_{v_0})+\varphi_{F_1}(u_{v_0}-u_{v_2})=0\]
Hence the restriction of $u$ to $V(G)$ is an infinitesimal flex of $(G,p)$ and must be trivial.
In particular, $u_{v_1}=u_{v_3}$. Now $\varphi_{F_1}(u_{v_0}-u_{v_1})=0$ and
$\varphi_{F_2}(u_{v_0}-u_{v_1})=\varphi_{F_2}(u_{v_0}-u_{v_3})=0$ and so $u_{v_0}=u_{v_1}$.
We conclude that $u$ is a constant and hence trivial infinitesimal flex of $(G',p')$.
\end{proof}

A vertex splitting move $G\to G'$ adjoins a new vertex $v_0$ and two new edges $v_0v_1$ and $v_0v_2$ to $G$ where $v_1v_2$ is an edge of $G$. Edges $v_1w$ of $G$ which are incident with $v_1$ may be replaced with the edge $v_0w$.

\begin{proposition}
\label{Graph3}
The vertex splitting move preserves infinitesimal rigidity for finite well-positioned bar-joint frameworks in $(\bR^2,\|\cdot\|_\P)$.
\end{proposition}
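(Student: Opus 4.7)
The plan is to adapt the constructive strategy already used for the Henneberg $1$- and $2$-moves. Assume $(G,p)$ is well-positioned and infinitesimally rigid, and let $G\to G'$ be the vertex splitting that introduces a new vertex $v_0$ together with the edges $v_0v_1$, $v_0v_2$ and a (possibly empty) set of transferred edges $v_1w\mapsto v_0w$. Let $[F_1]$ denote the framework colour of the edge $v_1v_2$ in $(G,p)$ and fix any $[F_2]\in\Phi(\P)$ with $[F_2]\neq[F_1]$; such a choice exists because $\P\subset\bR^2$ is a centrally symmetric polygon with at least four facets. I would then set $p'_v=p_v$ for every $v\in V(G)$ and place $p'_{v_0}$ in a sufficiently small neighbourhood of $p_{v_1}$ with $p'_{v_0}-p_{v_1}\in\pm\cone(F_2)^\circ$.

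For $p'_{v_0}$ close enough to $p_{v_1}$ the edge $v_0v_1$ carries the colour $[F_2]$ by construction, and by continuity $p'_{v_0}-p_w$ lies in the same open conical hull as $p_{v_1}-p_w$ for every neighbour $w$ of $v_1$ in $G$. Consequently $v_0v_2$ inherits the colour $[F_1]$, every transferred edge $v_0w$ inherits the framework colour of the original edge $v_1w$ in $(G,p)$, and no other edge changes. This verifies that $(G',p')$ is well-positioned.

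Now let $u\in\F(G',p')$ and apply Theorem \ref{flex1}. The edge $v_0v_1$ yields $\varphi_{F_2}(u_{v_0}-u_{v_1})=0$, and subtracting the flex conditions on the two $[F_1]$-coloured edges $v_0v_2$ and $v_1v_2$ gives $\varphi_{F_1}(u_{v_0}-u_{v_1})=0$. Since $[F_1]\neq[F_2]$, the extreme points $\hat{F}_1$ and $\hat{F}_2$ of $\P^\triangle$ are linearly independent in $\bR^2$, whence $u_{v_0}=u_{v_1}$.

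It remains to show that the restriction $u|_{V(G)}$ is an infinitesimal flex of $(G,p)$. Any edge of $G$ which is not transferred is still an edge of $G'$, so its flex condition is inherited directly. For a transferred edge $v_1w$ the flex condition on the corresponding edge $v_0w$ in $(G',p')$ combined with the identity $u_{v_0}=u_{v_1}$ supplies $\varphi_F(u_{v_1}-u_w)=\varphi_F(u_{v_0}-u_w)=0$, which is the required condition at the edge $v_1w$ of $(G,p)$. Infinitesimal rigidity of $(G,p)$ then forces $u|_{V(G)}$ to be constant, and the equality $u_{v_0}=u_{v_1}$ propagates this constancy to all of $V(G')$, so $u$ is trivial. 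The main obstacle is the identification $u_{v_0}=u_{v_1}$: it relies on two linearly independent facet functionals being extracted from the ``triangle'' $v_0v_1v_2$, which in turn uses both $[F_1]\neq[F_2]$ and the ambient dimension $d=2$.
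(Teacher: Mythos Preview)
Your proof is correct and follows essentially the same construction and argument as the paper: place $v_0$ near $p_{v_1}$ inside $p_{v_1}\pm\cone(F_2)^\circ$, use continuity (equivalently, the finite valence of $v_1$) to keep all transferred edges and $v_0v_2$ in their original colours, deduce $u_{v_0}=u_{v_1}$ from the triangle $v_0v_1v_2$, and then restrict to $(G,p)$. The only cosmetic difference is that the paper obtains $u_{v_0}=u_{v_1}$ by invoking Proposition~\ref{RelRigid} (monochrome paths in $G'_{F_1}$ and $G'_{F_2}$ between $v_0$ and $v_1$), whereas you unwind that proposition and subtract the two $[F_1]$-conditions directly; these are the same computation.
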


\begin{proof}
Suppose $(G,p)$ is well-positioned and infinitesimally rigid and let $G\to G'$ be a vertex splitting move on the vertex $v_1\in V(G)$ and the edge $v_1v_2\in E(G)$. Let $[F_1]$ be the unique framework colour for $v_1v_2$ and choose any  $[F_2]\in\Phi(\P)$ with $[F_2]\not=[F_1]$.
Since $v_1$ has finite valence, there exists an open ball $B(p_{v_1},r)$ such that if $p_{v_1}$ is replaced with any point $x\in B(p_{v_1},r)$ then the induced framework colouring of $G$ is left unchanged. 
Define  a placement $p'$ of $G'$ by setting $p'_{v}=p_{v}$ for all $v\in V(G)$
and choosing  \[p'_{v_0}\in (p_{v_1}+\cone(F_2)^{\circ})\cap B(p_{v_1},r)\]
Then $(G',p')$ is well-positioned. 
Suppose $u\in \F(G',p')$ is an infinitesimal flex of $(G',p')$.
The framework colours for the edges $v_0v_1$ and $v_0v_2$ are $[F_2]$ and $[F_1]$ respectively.
Thus there exists a path from $v_0$ to $v_1$ in the monochrome subgraph $G'_{F_1}$ given by the edges $v_1v_2,v_2v_0$ and there exists a path from $v_0$ to $v_1$ in the monochrome subgraph $G'_{F_2}$ 
given by the edge $v_0v_1$.
By the relative rigidity method of Proposition \ref{RelRigid}, $u_{v_0}=u_{v_1}$.
If an edge $v_1w$ in $G$ has framework colour $[F]$ induced by $(G,p)$ and is replaced by $v_0w$ in $G'$ then the framework colour is unchanged. 
Thus applying Theorem \ref{flex1}, 
\[\varphi_{F}(u_{v_1}-u_w) = \varphi_F(u_{v_1}-u_{v_0})+\varphi_F(u_{v_0}-u_w)=0\]
and so the restriction of $u$ to $V(G)$ is an infinitesimal flex of $(G,p)$. 
This restriction is constant since $(G,p)$ is infinitesimally rigid and so $u$ is a trivial infinitesimal flex of $(G',p')$. 
\end{proof}

A vertex-to-$K_4$ move $G\to G'$ replaces a vertex $v_0\in V(G)$ with a copy of the complete graph $K_4$ by adjoining three new vertices $v_1,v_2,v_3$ and six edges $v_0v_1$, $v_0v_2$, $v_0v_3$, $v_1v_2$, $v_1v_3$, $v_2v_3$.
Each edge $v_0w$ of $G$ which is incident with $v_0$  may be left unchanged or replaced by one of $v_1w$, $v_2w$ or $v_3w$.  

\begin{proposition}
\label{Graph4}
The vertex-to-$K_4$ move preserves infinitesimal rigidity for finite well-positioned bar-joint frameworks in $(\bR^2,\|\cdot\|_\P)$.
\end{proposition}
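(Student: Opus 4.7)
The plan is to follow the pattern of Propositions \ref{Graph1}--\ref{Graph3}: construct a well-positioned placement $p'$ of $G'$ in which the induced $K_4$-subframework on $\{v_0, v_1, v_2, v_3\}$ is infinitesimally rigid of the type produced in Example \ref{K4Rigid}, while leaving $p'_w = p_w$ for every $w \in V(G)$ and preserving the framework colour of each edge incident with $v_0$.

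For the placement, since $v_0$ has finite valence in $G$ there exists an open ball $B(p_{v_0}, r)$ such that any point $x \in B(p_{v_0}, r)$ satisfies $x - p_w \in \cone(F_{v_0 w})^{\circ}$ for every neighbour $w$ of $v_0$ in $G$. Example \ref{K4Rigid} produces an infinitesimally rigid well-positioned placement $q$ of $K_4$ in $(\bR^2, \|\cdot\|_\P)$ with $q_{v_0} = (0, 0)$, and both infinitesimal rigidity and the induced framework colouring are invariant under positive scaling and translation. Setting $p'_{v_0} = p_{v_0}$ and $p'_{v_i} = \lambda q_{v_i} + p_{v_0}$ for $i = 1, 2, 3$ with $\lambda > 0$ sufficiently small places $p'_{v_1}, p'_{v_2}, p'_{v_3}$ inside $B(p_{v_0}, r)$. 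Consequently $(K_4, p'|_{\{v_0, v_1, v_2, v_3\}})$ is well-positioned and infinitesimally rigid, and whether an original edge $v_0 w$ of $G$ is left unchanged or replaced by some $v_i w$ in $G'$, the resulting edge of $G'$ carries the same framework colour as $v_0 w$ did in $(G, p)$.

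For the flex argument, let $u \in \F(G', p')$. Infinitesimal rigidity of the $K_4$-subframework forces $u_{v_0} = u_{v_1} = u_{v_2} = u_{v_3} =: c$. Define $\tilde{u}: V(G) \to \bR^2$ by $\tilde{u}_w = u_w$ for $w \neq v_0$ and $\tilde{u}_{v_0} = c$. A short case check via Theorem \ref{flex1} shows that $\tilde{u} \in \F(G, p)$: edges of $G$ not meeting $v_0$ inherit the flex condition directly from $u$; an unchanged edge $v_0 w$ also inherits it since $\tilde{u}_{v_0} = u_{v_0}$; and for an edge $v_0 w$ of $G$ replaced by $v_i w$ in $G'$, the identity
\[
\tilde{u}_{v_0} - \tilde{u}_w = c - u_w = u_{v_i} - u_w
\]
together with equality of framework colours transfers the flex condition from $(G', p')$ to $(G, p)$. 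Infinitesimal rigidity of $(G, p)$ then forces $\tilde{u}$ to be constant and equal to $c$; combined with $u_{v_i} = c$ for $i = 1, 2, 3$, this makes $u$ constant on all of $V(G') = V(G) \cup \{v_1, v_2, v_3\}$, hence trivial.

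The main obstacle is the placement step: one needs Example \ref{K4Rigid} to supply a rigid $K_4$ configuration which, after rescaling and translation, fits inside $B(p_{v_0}, r)$ without disturbing the framework colours of any edge of $G$. This is precisely what scale- and translation-invariance of framework colours delivers, and it is the reason the argument relies on both finite valence of $v_0$ and the universality of Example \ref{K4Rigid} across all polyhedral norms on $\bR^2$.
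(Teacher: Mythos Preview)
Your proof is correct and follows essentially the same approach as the paper: place a scaled copy of the rigid $K_4$ from Example \ref{K4Rigid} inside a small ball around $p_{v_0}$ so that all framework colours of edges incident with $v_0$ are preserved, use rigidity of the $K_4$-subframework to force $u_{v_0}=u_{v_1}=u_{v_2}=u_{v_3}$, and then transfer the flex condition back to $(G,p)$ via Theorem \ref{flex1}. The only cosmetic difference is that the paper phrases the transfer step as $\varphi_F(u_{v_0}-u_w)=\varphi_F(u_{v_0}-u_{v_k})+\varphi_F(u_{v_k}-u_w)=0$ rather than introducing an auxiliary map $\tilde u$, but the content is identical.
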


\begin{proof}
Suppose $(G,p)$ is well-positioned and infinitesimally rigid and let $G\to G'$ be a vertex-to-$K_4$ move on the vertex $v_0\in V(G)$ which introduces new vertices $v_1$, $v_2$ and $v_3$. 
Since $v_0$ has finite valence, there exists an open ball $B(p_{v_0},r)$ such that if $p_{v_0}$ is replaced with any point $x\in B(p_{v_0},r)$ then $(G,x)$ and $(G,p)$ induce the same framework colouring on $G$. 
Let $(K_4,\tilde{p})$ be the well-positioned and infinitesimally rigid placement of $K_4$ constructed in Example \ref{K4Rigid}. 
Define a well-positioned placement $p'$ of $G'$ by  setting $p'_{v}=p_{v}$ for all $v\in V(G)$ and  
\[p'_{v_1}=p_{v_0}+\epsilon\tilde{p}_{v_1}, \,\,\,\,\,\,\,\, p'_{v_2}=p_{v_0}+\epsilon\tilde{p}_{v_2}, \,\,\,\,\,\,\,\, p'_{v_3}=p_{v_0}+\epsilon\tilde{p}_{v_3}\]
where $\epsilon>0$ is chosen to be sufficiently small so that 
$p'_{v_1}$, $p'_{v_2}$ and $p'_{v_3}$ are each contained in $B(p_{v_0},r)$.
Suppose $u\in\F(G',p')$. By the argument in Example \ref{K4Rigid}, the restriction of $u$ to the vertices
$v_0,v_1,v_2,v_3$ is constant.
Thus if $v_0w$ is an edge of $G$ with framework colour $[F]$ which is replaced by $v_kw$ in $G'$ then applying Theorem \ref{flex1}, 
\[\varphi_{F}(u_{v_0}-u_w) = \varphi_F(u_{v_0}-u_{v_k})+\varphi_F(u_{v_k}-u_w)=0\]
and so the restriction of $u$ to $V(G)$ is an infinitesimal flex of $(G,p)$.
Since $(G,p)$ is infinitesimally rigid this restriction is constant and we conclude that $u$ is a trivial infinitesimal flex of $(G',p')$.
\end{proof}

We now show that the class of finite graphs which have minimally infinitesimally rigid well-positioned placements in $(\bR^2,\|\cdot\|_\P)$ is precisely the class of $(2,2)$-tight graphs. In particular, the existence of such a placement does not depend on the choice of polyhedral norm on $\bR^2$.

\begin{theorem}
\label{Laman}
Let $G$ be a finite simple graph and let $\|\cdot\|_\P$ be a polyhedral norm on $\bR^2$.
The following statements are equivalent.
\begin{enumerate}[(i)]
\item $G$ is minimally rigid in $(\bR^2,\|\cdot\|_\P)$.
\item $G$ is $(2,2)$-tight.
\end{enumerate}
\end{theorem}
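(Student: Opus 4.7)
The proof splits naturally into the two implications. For $(i) \Rightarrow (ii)$ the work is already done: if $G$ admits a well-positioned minimally infinitesimally rigid placement, then Proposition \ref{Maxwell} immediately yields $|E(G)| = 2|V(G)| - 2$ together with $|E(H)| \leq 2|V(H)| - 2$ for every subgraph $H$, so $G$ is $(2,2)$-tight.

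For the converse $(ii) \Rightarrow (i)$ I would argue inductively using the Henneberg-type construction scheme for $(2,2)$-tight graphs recalled from \cite{NOP, NOP2}: every $(2,2)$-tight graph is obtained from a single vertex by applying some finite sequence of Henneberg $1$-moves, Henneberg $2$-moves, vertex splits, and vertex-to-$K_4$ moves. The single vertex (with no edges) is trivially infinitesimally rigid and is well-positioned vacuously, so it serves as the base case. Propositions \ref{Graph1}, \ref{Graph2}, \ref{Graph3} and \ref{Graph4} show that each of the four allowable moves produces a well-positioned infinitesimally rigid placement from a well-positioned infinitesimally rigid one. Hence by induction on the length of the construction sequence, $G$ admits a well-positioned infinitesimally rigid placement $(G,p)$ in $(\bR^2,\|\cdot\|_\P)$.

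It remains to promote infinitesimal rigidity to \emph{minimal} infinitesimal rigidity, and here I would invoke Proposition \ref{RigidityMatrixProp} together with the $(2,2)$-tight edge count. Since $(G,p)$ is infinitesimally rigid, $\rank R_\P(G,p) = 2|V(G)| - 2$, while $(2,2)$-tightness gives $|E(G)| = 2|V(G)| - 2$. Hence $R_\P(G,p)$ has exactly as many rows as its rank, so its rows are linearly independent. Deleting any edge removes one row and drops the rank to $2|V(G)| - 3$, which is strictly less than $2|V(G)| - 2$, so by Proposition \ref{RigidityMatrixProp}(ii) the resulting subframework fails to be infinitesimally rigid. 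Thus $(G,p)$ is minimally infinitesimally rigid.

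The main obstacle is not analytic but combinatorial and is imported rather than proved here: it is the Henneberg-type inductive generation of all $(2,2)$-tight graphs by the four graph moves listed above, for which I would cite \cite{NOP, NOP2}. With that in hand, the proof is essentially an assembly of ingredients already developed in the paper — Propositions \ref{Maxwell}, \ref{Graph1}--\ref{Graph4}, and the rank--edge count comparison supplied by Proposition \ref{RigidityMatrixProp} — and requires no further analytic input.
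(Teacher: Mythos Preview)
Your proof is correct and follows essentially the same route as the paper: Proposition~\ref{Maxwell} for $(i)\Rightarrow(ii)$, the inductive construction via $K_1$ and Propositions~\ref{Graph1}--\ref{Graph4} for $(ii)\Rightarrow(i)$, and a rank--edge count comparison for minimality. The only cosmetic difference is that the paper phrases the minimality step as an appeal to Proposition~\ref{Maxwell} (removing an edge violates the necessary count), whereas you unpack the same rank argument directly via Proposition~\ref{RigidityMatrixProp}; the content is identical.
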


\begin{proof}
$(i)\Rightarrow (ii)$. If $G$ is minimally rigid then there exists a placement $p$ such that $(G,p)$ is minimally infinitesimally rigid in $(\bR^2,\|\cdot\|_\P)$ and the result  follows from Proposition \ref{Maxwell}.

$(ii)\Rightarrow(i)$.
If  $G$ is $(2,2)$-tight then there exists a finite sequence of allowable graph moves,
\[K_1\overset{\mu_1}\longrightarrow G_2\overset{\mu_2}\longrightarrow G_3\overset{\mu_3}\longrightarrow\cdots \overset{\mu_{n-1}}\longrightarrow G\]
Every placement of $K_1$ is certainly infinitesimally rigid.
By Propositions \ref{Graph1}-\ref{Graph4}, for each graph in the sequence there exists a well-positioned and infinitesimally rigid placement  in $(\bR^2,\|\cdot\|_\P)$. 
In particular, $(G,p)$ is infinitesimally rigid for some well-positioned placement $p$. 
If a single edge is removed from $G$ then by Proposition \ref{Maxwell}, the resulting subframework is infinitesimally flexible. Hence $(G,p)$ is minimally infinitesimally rigid in $(\bR^2,\|\cdot\|_\P)$.  
\end{proof}

The collection of placements of a $(2,2)$-tight graph which are well-positioned and minimally infinitesimally rigid in $(\bR^2,\|\cdot\|_\P)$ varies with $\P$ and this is illustrated in the following two examples.  

\begin{example}
\label{NonRegEx}
Let $(G,p)$ be the well-positioned bar-joint framework in $(\bR^2,\|\cdot\|_\infty)$ illustrated in Figure \ref{NonRegFig}. The monochrome subgraph $G_{F_2}$ (indicated in gray) is not a spanning subgraph of $G$  and so, by Theorem \ref{RigThm}, $(G,p)$ is infinitesimally flexible. 
The graph $G$ is $(2,2)$-tight  and so, by Theorem \ref{Laman}, the regular placements of $G$ are infinitesimally rigid. We conclude that $(G,p)$ is not a regular bar-joint framework.
\end{example}

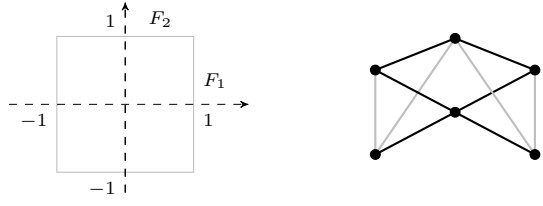
\begin{figure}[h]
\centering
  \begin{tabular}{  c   }
  
 \begin{minipage}{.45\textwidth}
\begin{tikzpicture}[scale=0.9, axis/.style={very thin,dashed, ->, >=stealth'},
    important line/.style={very thick},
    dashed line/.style={dashed,  thick},
    pile/.style={thick, ->, >=stealth', shorten <=2pt, shorten
    >=2pt},
    every node/.style={color=black}]
 
  \clip (-3.5,-1.5) rectangle (2cm,1.5cm); 
  
  \coordinate (A1) at (-1,1);
  \coordinate (A2) at (0,0);
  \coordinate (A3) at (1,-1);
  \coordinate (A4) at (1,1);
  \coordinate (A5) at (-1,-1); 

  \draw[lightgray] (A3) -- (A4) -- (A1) -- (A5) -- cycle;

  \draw[axis] (-1.7,0)  -- (1.8,0) node(xline)[right]
        {$ $};
  \draw[axis] (0,-1.3) -- (0,1.5) node(yline)[above] {$ $};

  \draw[dashed] (0,-1.3) -- (0,1.5);

  \node[below left] at (-1,0)  {\tiny $-1$};
  \node[below right] at (1,0)  {\tiny $1$};	

  \node[above left] at (0,1)  {\tiny $1$};	
  \node[below left] at (0,-1)  {\tiny $-1$};	

  \node[above right] at (0.2,1)  {\tiny $F_2$};	
  \node[above right] at (1,0.1)  {\tiny $F_1$};	
\end{tikzpicture}
\end{minipage}

    \begin{minipage}{.45\textwidth}
 \begin{tikzpicture}[scale=0.7]
 
    \clip (-1.4,-0.5) rectangle (3cm,2.4cm); 
  
  \coordinate (Aone) at (-0.5,0);
  \coordinate (Atwo) at (1,0.8);
  \coordinate (Athree) at (2.5,0);
 
  \coordinate (Afive) at (2.5,1.6);
  \coordinate (Asix) at (1,2.2);
  \coordinate (Aseven) at (-0.5,1.6);

 \draw[thick] (Aone) -- (Atwo);
 \draw[thick] (Atwo) -- (Athree);
 \draw[thick] (Afive) -- (Asix);
 \draw[thick]  (Asix) -- (Aseven);

 \draw[thick,lightgray] (Aone) -- (Asix); 
\draw[thick,lightgray] (Aone) -- (Aseven); 

 \draw[thick] (Atwo) -- (Afive); 
 \draw[thick] (Atwo) -- (Aseven); 

 \draw[thick,lightgray] (Athree) -- (Afive); 
 \draw[thick,lightgray] (Athree) -- (Asix); 


      \node[draw,circle,inner sep=1.3pt,fill] at (Aone) {};
  \node[draw,circle,inner sep=1.3pt,fill] at (Atwo) {};
  \node[draw,circle,inner sep=1.3pt,fill] at (Athree) {};

  \node[draw,circle,inner sep=1.3pt,fill] at (Afive) {};
  \node[draw,circle,inner sep=1.3pt,fill] at (Asix) {};
  \node[draw,circle,inner sep=1.3pt,fill] at (Aseven) {};

  \end{tikzpicture}

\end{minipage}

\end{tabular}
 \caption{A non-regular, infinitesimally flexible placement of a $(2,2)$-tight graph in $(\bR^2,\|\cdot\|_\infty)$}
\label{NonRegFig}
\end{figure}

In the following example we consider the same bar-joint framework as in Example \ref{NonRegEx} 
but with a different polyhedral norm. In this case the placement is infinitesimally rigid.

\begin{example}
A large class of polyhedral norms can be derived from submodular functions. 
Let $S=\{1,2,\ldots,d\}$ and let $f:2^S\to\bR$ be a function on the power set of $S$.
For each $x=(x_1,x_2,\ldots,x_d)\in \bR_+^d$ define 
$A_k(x)=\{j_1,\ldots,j_k\}\subseteq S$
where $x_{j_1}\geq x_{j_2}\geq\cdots\geq x_{j_d}\geq 0$.
The Lov\'{a}sz extension of $f$ (\cite{Lovasz}) is defined for $x\in \bR^d_+$ by,
\[\hat{f}(x) = \sum_{k=1}^d x_{j_k}\triangle_kf(x)\]
where $\triangle_kf(x) = f(A_k(x))-f(A_{k-1}(x))$.
If $f$ is submodular, monotone and satisfies $f(\emptyset)=0$ and $f(\{j\})>0$ for each $j$ then the function
\[\|x\|_\P = \hat{f}(|x_1|,\ldots,|x_d|)\]
is a polyhedral norm on $\bR^d$.
If $F$ is a facet of $\P$ and $x\in \cone(F)^\circ$ then
\[\hat{F} =  
(\sgn(x_1)\triangle_{\sigma(1)}f(|x|),\ldots,\sgn(x_d)\triangle_{\sigma(d)}f(|x|))
\]
where $\sigma:\{1,\ldots,d\}\to\{1,\ldots,d\}$ is the inverse of the permutation 
$k\mapsto j_k$ determined by the coordinates of $x$.
Consider, for example,  the submodular function $f:2^S\to \bR$ where  $S=\{1,2\}$ and,
\[f(A) =\left\{\begin{array}{ll}
0 & \mbox{ if } A=\emptyset \\
1 & \mbox{ if } A=\{2\} \\
2 & \mbox{ otherwise }
\end{array}\right.
 \]
The associated polyhedral norm is defined for $x=(x_1,x_2)\in\bR^2$ by,
\[\|x\|_\P = \left\{\begin{array}{ll}
2|x_1| & \mbox{ if } |x_1|\leq |x_2| \\
|x_1|+|x_2| & \mbox{ if } |x_1|\geq |x_2|
\end{array}\right.\]
Let $(G,p)$ be the bar-joint framework in $(\bR^2,\|\cdot\|_\P)$ illustrated in Figure \ref{SubmodularFig}. The monochrome subgraphs  induced by the facets $F_1$, $F_2$ and $F_3$ are indicated in black, gray and dashed lines respectively and the corresponding extreme points of the polar set $\P^\triangle$ are,
\[\hat{F}_1=(1,1), \,\,\,\,\,\,\, \hat{F}_2 = (1,0), \,\,\,\,\,\,\,\, \hat{F}_3=(-1,1)\] 
The rigidity matrix $R_\P(G,p)$ is, 
\[ \kbordermatrix{
& a_x & a_y && b_x & b_y &   & c_x & c_y  && d_x & d_y& &  e_x & e_y& &  f_x & f_y \\
(ab,F_1)& 1  & 1 & \vrule & -1 & -1 & \vrule & 0 & 0 &\vrule&  0 & 0   &\vrule&  0 & 0 & \vrule & 0 & 0  \\
(ae,F_2)& 1 & 0 & \vrule & -1 & 0 &  \vrule &  0 & 0  &\vrule &0 & 0  &\vrule&  0 & 0 & \vrule & 0 & 0  \\
(af,F_2)& 1  & 0 & \vrule & 0 & 0 &  \vrule &   -1 & 0  &\vrule & 0 & 0  & \vrule & 0 & 0  &\vrule & 0 & 0 \\
(bc,F_3)&0 & 0 &\vrule & -1 & 1 &  \vrule &  1& -1 &\vrule & 0 & 0  &\vrule&  0 & 0  &\vrule & 0 & 0  \\
(bd,F_1)&0 & 0  &\vrule  &1  & 1 &  \vrule & 0 & 0  &\vrule &-1  & -1  &\vrule & 0 & 0  &\vrule & 0 & 0 \\
(bf,F_3)& 0 & 0  &\vrule & -1 & 1 &  \vrule &  0 & 0  &\vrule &0 &0&  \vrule&  0 & 0  &\vrule &1 & -1 \\
(cd,F_2)& 0 & 0 & \vrule &0  & 0 & \vrule & 1 & 0   &\vrule &  -1 & 0  &\vrule&  0 & 0  &\vrule & 0 & 0 \\
(ce,F_2) &0 & 0  &\vrule & 0 & 0 &  \vrule & 1 & 0  &\vrule & 0 &0& \vrule&  -1 & 0   &\vrule & 0 & 0  \\
(de,F_3)& 0 & 0  & \vrule& 0 & 0 &  \vrule & 0  & 0 &\vrule &  -1 & 1  &\vrule &1 & -1  &\vrule & 0 & 0  \\
(ef,F_1)& 0 & 0  & \vrule & 0 & 0 & \vrule &0 & 0 &\vrule & 0 &0& \vrule&  1  & 1 &\vrule & -1  & -1 
}\]
By computing the rank of the rigidity matrix or alternatively by applying the edge-labelled path argument of Proposition \ref{RelRigid} we see that $(G,p)$ is minimally infinitesimally rigid.

\end{example}

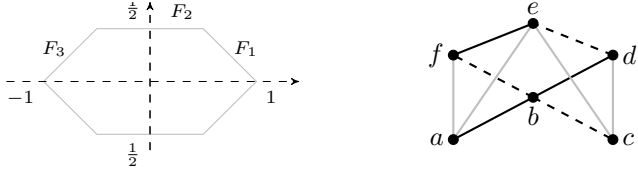
\begin{figure}[h]
\centering
  \begin{tabular}{  c   }
  
 \begin{minipage}{.45\textwidth}
\begin{tikzpicture}[scale=0.7, axis/.style={very thin,dashed, ->, >=stealth'},
    important line/.style={very thick},
    dashed line/.style={dashed,  thick},
    pile/.style={thick, ->, >=stealth', shorten <=2pt, shorten
    >=2pt},
    every node/.style={color=black}]
 
  \clip (-3.5,-1.7) rectangle (3cm,1.5cm); 
  
  \coordinate (A1) at (-1,-1);
  \coordinate (A2) at (1,-1);
  \coordinate (A3) at (2,0);
  \coordinate (A4) at (1,1);
  \coordinate (A5) at (-1,1); 
 \coordinate (A6) at (-2,0); 

  \draw[lightgray] (A1) -- (A2) -- (A3) -- (A4) -- (A5) -- (A6) -- cycle;

  \draw[axis] (-2.7,0)  -- (2.8,0) node(xline)[right]
        {$ $};
  \draw[axis] (0,-1.3) -- (0,1.5) node(yline)[above] {$ $};

  \draw[dashed] (0,-1.3) -- (0,1.5);

  \node[below left] at (-2,0)  {\tiny $-1$};
  \node[below right] at (2,0)  {\tiny $1$};	

  \node[above left] at (0,1)  {\tiny $\frac{1}{2}$};	
  \node[below left] at (0,-1)  {\tiny $\frac{1}{2}$};	

  \node[above right] at (0.2,1)  {\tiny $F_2$};	
  \node[above right] at (1.4,0.3)  {\tiny $F_1$};	
 \node[above left] at (-1.4,0.3)  {\tiny $F_3$};	
\end{tikzpicture}
\end{minipage}

    \begin{minipage}{.45\textwidth}
 \begin{tikzpicture}[scale=0.7]
 
    \clip (-1.4,-0.8) rectangle (3cm,2.8cm); 
  
  \coordinate (Aone) at (-0.5,0);
  \coordinate (Atwo) at (1,0.8);
  \coordinate (Athree) at (2.5,0);
 
  \coordinate (Afive) at (2.5,1.6);
  \coordinate (Asix) at (1,2.2);
  \coordinate (Aseven) at (-0.5,1.6);

 \draw[thick] (Aone) -- (Atwo);
 \draw[thick,dashed] (Atwo) -- (Athree);
 \draw[thick,dashed] (Afive) -- (Asix);
 \draw[thick]  (Asix) -- (Aseven);

 \draw[thick,lightgray] (Aone) -- (Asix); 
\draw[thick,lightgray] (Aone) -- (Aseven); 

 \draw[thick] (Atwo) -- (Afive); 
 \draw[thick,dashed] (Atwo) -- (Aseven); 

 \draw[thick,lightgray] (Athree) -- (Afive); 
 \draw[thick,lightgray] (Athree) -- (Asix); 


      \node[draw,circle,inner sep=1.3pt,fill] at (Aone) {};
  \node[draw,circle,inner sep=1.3pt,fill] at (Atwo) {};
  \node[draw,circle,inner sep=1.3pt,fill] at (Athree) {};

  \node[draw,circle,inner sep=1.3pt,fill] at (Afive) {};
  \node[draw,circle,inner sep=1.3pt,fill] at (Asix) {};
  \node[draw,circle,inner sep=1.3pt,fill] at (Aseven) {};

 \node[left] at (Aone)  { $a$};	
  \node[below] at (Atwo)  { $b$};	
 \node[right] at (Athree)  {$c$};	
\node[left] at (Aseven)  { $f$};	
  \node[above] at (Asix)  { $e$};	
 \node[right] at (Afive)  {$d$};	
  \end{tikzpicture}

\end{minipage}
\end{tabular}
 \caption{A minimally infinitesimally rigid bar-joint framework in $(\bR^2,\|\cdot\|_\P)$}
\label{SubmodularFig}
\end{figure}

\section{Infinite frameworks}
\label{Infinite}
In this section we consider some aspects of rigidity which are unique to infinite bar-joint frameworks. 
Let 
$B(p_v,r)$ be the open ball in $\bR^d$ with centre $p_v$ and radius $r$.
An infinite  bar-joint framework $(G,p)$ is {\em uniformly well-positioned} in $(\bR^d,\|\cdot\|_\P)$ if there exists $r>0$ such that 
$(G,x)$ is well-positioned for all $x\in \prod_{v\in V(G)} B(p_v,r)$.
An {\em equicontinuous flex} of $(G,p)$ is a finite flex $\{\alpha_v:v\in V(G)\}$ which is also equicontinuous as a collection of functions from an interval $(-\delta,\delta)$ into $\bR^d$.
An infinite graph is locally finite if every vertex has finite valence.



\begin{proposition}\label{Equicontinuous}
Let $(G,p)$ be a uniformly well-positioned bar-joint framework in $(\bR^d,\|\cdot\|_\P)$ and suppose that $G$ is locally finite.
If $(G,p)$ is infinitesimally rigid then every equicontinuous flex of $(G,p)$ is trivial.
\end{proposition}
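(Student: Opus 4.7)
The plan is to reduce the statement to Theorem \ref{flex1} combined with the infinitesimal rigidity hypothesis by showing that at each sufficiently small time $t$ the displacement $u(t):=\alpha(t)-p$ is in fact an infinitesimal flex of $(G,p)$. Let $r>0$ be the radius supplied by uniform well-positionedness of $(G,p)$, and let $\{\alpha_v:v\in V(G)\}$ be an equicontinuous flex defined on $(-\delta,\delta)$. Equicontinuity at $0$ produces some $\delta'\in(0,\delta]$ with $\|\alpha_v(t)-p_v\|<r$ for every $v\in V(G)$ and every $|t|<\delta'$, so that $\alpha(t)\in\prod_{v\in V(G)}B(p_v,r)$ and hence $(G,\alpha(t))$ is well-positioned. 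The product of open balls is connected, and the map sending a well-positioned placement $x$ to the unique facet whose open conical hull contains $x_v-x_w$ is locally constant; it must therefore coincide with $F_{vw}$ throughout the entire neighbourhood. Consequently $\alpha_v(t)-\alpha_w(t)\in\cone(F_{vw})^\circ$ for every edge $vw\in E(G)$ and every $|t|<\delta'$.

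Using formula (\ref{norm3}) and length-preservation along the flex we obtain
\[\varphi_{F_{vw}}(\alpha_v(t)-\alpha_w(t))=\|\alpha_v(t)-\alpha_w(t)\|_\P=\|p_v-p_w\|_\P=\varphi_{F_{vw}}(p_v-p_w),\]
so $u_v(t):=\alpha_v(t)-p_v$ satisfies $\varphi_{F_{vw}}(u_v(t)-u_w(t))=0$ on each edge. Since $(G,p)$ is well-positioned, Lemma \ref{SuppLemma} shows that $\varphi_{F_{vw}}$ is the unique element of $\supp_\Phi(vw)$, and Theorem \ref{flex1} then identifies $u(t)\in\F(G,p)$. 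Infinitesimal rigidity of $(G,p)$ forces $u_v(t)=c(t)$ for all $v\in V(G)$, where $c(t):=\alpha_{v_0}(t)-p_{v_0}$ for a fixed vertex $v_0$; hence $\alpha_v(t)=p_v+c(t)$ for all $v\in V(G)$ and all $|t|<\delta'$.

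Finally, I would upgrade this local identity to the whole interval by a connectedness argument. Let
\[T=\{\,t\in(-\delta,\delta):\alpha_v(t)-p_v=c(t)\text{ for all }v\in V(G)\,\},\]
with $c(t):=\alpha_{v_0}(t)-p_{v_0}$. Continuity of each $\alpha_v$ makes $T$ closed. For openness, given $t_0\in T$, the translated framework $(G,p+c(t_0))$ is again uniformly well-positioned (with the same $r$) and again infinitesimally rigid, while the time-shifted family $\beta_v(s):=\alpha_v(t_0+s)$ is a finite equicontinuous flex of it; the argument above then furnishes a $\delta''>0$ on which $\beta_v(s)-(p_v+c(t_0))$ is independent of $v$, and evaluating at $v_0$ identifies this common displacement with $c(t_0+s)-c(t_0)$, giving $t_0+s\in T$ for $|s|<\delta''$. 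Connectedness of $(-\delta,\delta)$ forces $T=(-\delta,\delta)$, so the flex is trivial. The key obstacle peculiar to the infinite case is the interaction between equicontinuity and uniform well-positionedness: pointwise continuity together with pointwise well-positionedness would allow the per-edge ``window of validity'' in which $\alpha_v(t)-\alpha_w(t)$ remains in $\cone(F_{vw})^\circ$ to shrink to zero across infinitely many edges, obstructing the simultaneous invocation of Theorem \ref{flex1} that drives the whole argument.
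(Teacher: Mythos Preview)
Your proof is correct and follows essentially the same route as the paper: use equicontinuity to trap the flex inside the uniform neighbourhood $\prod_v B(p_v,r)$, observe that the facet labelling is unchanged there, and conclude via Theorem \ref{flex1} that each displacement $\alpha(t)-p$ is an infinitesimal flex and hence constant by infinitesimal rigidity. The paper invokes Proposition \ref{Configuration} rather than recomputing $\varphi_{F_{vw}}$ directly and is content to stop once triviality is established on a sub-interval (as in the proof of Theorem \ref{t:rigiditythm2}); your additional connectedness argument extending to the full domain is sound but not needed for the paper's convention.
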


\begin{proof}
Since $(G,p)$ is uniformly well-positioned there exists $r>0$ such that
$(G,x)$ is well-positioned for all $x\in U:= \prod_{v\in V(G)} B(p_v,r)$.
In particular, $x_v-x_w$ is contained in the conical hull of the same unique facet $F_{vw}$ as $p_v-p_w$ 
for each edge $vw\in E(G)$.
Hence $\|x_v-x_w\|_\P = (x_v-x_w)\cdot\hat{F}_{vw}$ for all $x\in U$ and all $vw\in E(G)$.
As in Proposition  \ref{Configuration} we have 
\[V(G,p)\cap U = (p+\F(G,p))\cap U\]
where we now regard $U$ as an open neighbourhood of $p$ with respect to the box topology on $\prod_{v\in V(G)} \bR^d$.
If $\{\alpha_v:v\in V(G)\}$ is an equicontinuous flex of $(G,p)$ then there exists $\delta>0$ such that
$\alpha_v(t)\in B(p_v,r)$ for all $|t|<\delta$.
Thus $(\alpha_v(t))_{v\in V(G)}\in V(G,p)\cap U$ for all $|t|<\delta$.
Now $(\alpha_v(t)-p_v)_{v\in V(G)}\in \F(G,p)$ is an infinitesimal flex of $(G,p)$ for each $|t|<\delta$ and so must be trivial.
Hence there exists $c(t)\in \bR^d$ such that $\alpha_v(t)-p_v=c(t)$ for all $v\in V(G)$ and all $|t|<\delta$. The function $c:(-\delta,\delta)\to\bR^d$ is continuous and so
$\{\alpha_v:v\in V(G)\}$ is a trivial finite flex of $(G,p)$.
\end{proof}



A {\em vertex-complete tower of bar-joint frameworks} in $(G,p)$ is a sequence of finite subframeworks $\{(G_k,p):k\in\bN\}$ such that $G_k$ is a subgraph of $G_{k+1}$ for each $k\in\bN$ and $V(G)=\cup_{k\in\bN} V(G_k)$.
In \cite[Theorem 3.10]{kit-pow-2} it is shown that given any norm on $\bR^d$, a countable bar-joint framework is infinitesimally rigid if and only if it contains a vertex-complete tower such that $(G_k,p)$ is relatively infinitesimally rigid in $(G_{k+1},p)$ for each $k\in\bN$.
The following is a direct proof of this fact for polyhedral norms which exploits the edge-labelling methods of Section \ref{Edge}.

\begin{proposition}
\label{Towers}
 Let $(G,p)$ be a countable bar-joint framework in $(\mathbb{R}^d,\|\cdot\|_\P)$
and suppose that $|\Phi(G,p)|=d$.
Then the following statements are equivalent.
\begin{enumerate}[(i)]
\item $(G,p)$ is infinitesimally rigid.
\item $(G,p)$ has a vertex-complete tower of relatively infinitesimally rigid subframeworks.
\end{enumerate}
\end{proposition}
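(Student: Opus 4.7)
The plan is to deploy the two main tools from Section \ref{Edge}: the spanning-tree characterisation in Theorem \ref{RigThm} (which applies since $|\Phi(G,p)| = d$) and the path-based relative rigidity criterion in Proposition \ref{RelRigid}.

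The implication (ii)$\Rightarrow$(i) is the easier half. Given any infinitesimal flex $u \in \F(G,p)$, I would observe that for each $k$ the restriction $u|_{V(G_{k+1})}$ is an infinitesimal flex of $(G_{k+1}, p)$, so by relative rigidity of $(G_k, p)$ in $(G_{k+1}, p)$ its further restriction to $V(G_k)$ is constant, equal to some $c_k \in \bR^d$. The nesting $V(G_k) \subseteq V(G_{k+1})$ forces $c_k = c_{k+1}$ for all $k$, so $u$ takes a single constant value on $\bigcup_k V(G_k) = V(G)$ and is therefore trivial.

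For (i)$\Rightarrow$(ii), my plan is to construct the tower explicitly from monochrome paths. Write $\Phi(G,p) = \{[F_1], \dots, [F_d]\}$. By Theorem \ref{RigThm}, each monochrome subgraph $G_{F_\ell}$ contains a spanning tree of $G$, so for every ordered pair $(v,w)$ of vertices and every $\ell$ I may fix in advance a finite path $\gamma(v,w,\ell) \subseteq G_{F_\ell}$ from $v$ to $w$. Enumerate $V(G) = \{v_1, v_2, \dots\}$, set $G_1 = (\{v_1\}, \emptyset)$, and inductively let $G_{k+1}$ be the finite subgraph of $G$ obtained by adjoining to $G_k$ the vertex $v_{k+1}$ together with the edges and intermediate vertices of every $\gamma(v,w,\ell)$ with $v, w \in V(G_k) \cup \{v_{k+1}\}$ and $\ell = 1, \dots, d$. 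Then $G_k \subseteq G_{k+1}$, each $G_k$ is finite, and $v_k \in V(G_k)$, so the tower is vertex-complete.

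Relative rigidity of $(G_k, p)$ in $(G_{k+1}, p)$ then follows from Proposition \ref{RelRigid}: for any $v, w \in V(G_k)$ the path $\gamma(v,w,\ell)$ lies entirely in $G_{k+1}$ and all of its edges belong to $G_{F_\ell}$, so $X_{\gamma(v,w,\ell)} \subseteq \ker \varphi_{F_\ell}$, and hence
\[\bigcap_{\gamma \in \Gamma_{G_{k+1}}(v,w)} X_\gamma \;\subseteq\; \bigcap_{\ell=1}^d X_{\gamma(v,w,\ell)} \;\subseteq\; \bigcap_{\ell=1}^d \ker \varphi_{F_\ell}.\]
The main obstacle, and the one step where infinitesimal rigidity itself (as opposed to only the spanning-tree condition) must be used, is to verify that this last intersection is $\{0\}$. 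Assuming $|V(G)| \geq 2$ (the case $|V(G)|=1$ being vacuous), any nonzero $x \in \bigcap_\ell \ker \varphi_{F_\ell}$ would, via Theorem \ref{flex1}, make the map sending one chosen vertex to $x$ and all others to $0$ into a nonconstant infinitesimal flex of $(G,p)$, contradicting (i). Once this basis fact is in hand, the rest of the argument is essentially bookkeeping.
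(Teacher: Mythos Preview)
Your proposal is correct. The forward direction $(i)\Rightarrow(ii)$ follows essentially the same route as the paper: both invoke Theorem~\ref{RigThm} to obtain monochrome spanning trees, insert the resulting monochrome paths between pairs of vertices of a growing finite set, and then apply Proposition~\ref{RelRigid}. The only cosmetic difference is that the paper starts from an arbitrary vertex-complete tower $\{(G_k,p)\}$ and augments it to $\{(H_k,p)\}$ by adjoining paths, whereas you build the tower from scratch via an enumeration of $V(G)$; the underlying mechanism is identical. A genuine addition in your write-up is the explicit verification that $\bigcap_{\ell}\ker\varphi_{F_\ell}=\{0\}$, which you deduce from infinitesimal rigidity; the paper uses this fact silently (here and already in the proof of Theorem~\ref{RigThm}).

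For $(ii)\Rightarrow(i)$ you take a different, more elementary path than the paper. You argue directly that any $u\in\F(G,p)$ restricts to a constant $c_k$ on each $V(G_k)$ and that the constants cohere along the nested tower, so $u$ is constant on $V(G)$. This argument does not use the hypothesis $|\Phi(G,p)|=d$ at all, which is consistent with the remark preceding the proposition that this equivalence holds for arbitrary norms. The paper instead gives an edge-labelling argument by contradiction: if $(G,p)$ were flexible, Theorem~\ref{RigThm} would supply a vertex $v_0$ missed by some monochrome subgraph $G_F$, and the local flex from Proposition~\ref{Vertex} at $v_0$ would violate relative rigidity of $(G_k,p)$ in $(G_{k+1},p)$. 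Your version is shorter and more general; the paper's version is tailored to its stated aim of exhibiting the edge-labelling machinery at work.
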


\begin{proof}
$(i)\Rightarrow (ii)$
Choose a vertex-complete tower of bar-joint frameworks $\{(G_k,p):k\in\bN\}$ in $(G,p)$
and let $H_1=G_1$.
By Theorem \ref{RigThm}, $G_F$ is a spanning subgraph of $G$ for each framework colour $[F]\in\Phi(G,p)$.
It follows that if $v,w\in V(H_1)$ then there exists a path in $G_F$ from $v$ to $w$ for each  $[F]\in \Phi(G,p)$.
Let $H_2$ be the subgraph of $G$ spanned by the union of $G_2$ with these finitely many paths. 
By Proposition \ref{RelRigid}, $(H_1,p)$ is relatively infinitesimally rigid in $(H_2,p)$.
If $v,w\in V(H_2)$ then there exists a path in $G_F$ from $v$ to $w$ for each $[F]\in \Phi(G,p)$.
Let $H_3$ be the subgraph of $G$ spanned by the union of $G_3$ with these finitely many paths. 
Continuing this process we construct the desired framework tower $\{(H_k,p):k\in\bN\}$.

$(ii)\Rightarrow (i)$
Suppose there exists a vertex-complete tower $\{(G_k,p):k\in\bN\}$ of relatively infinitesimally rigid subframeworks in $(G,p)$.
If $(G,p)$ is not infinitesimally rigid then by Theorem \ref{RigThm} 
there exists a vertex $v_0\in V(G)$ and a facet $F$ of $\P$ such that $v_0\notin V(G_F)$.
Now $v_0\in V(G_k)$ for some $k\in\bN$ where $|V(G_k)|\geq2$.
Since $|\Phi(v_0)|<d$ 
we may define an infinitesimal flex $u$ of $(G_{k+1},p)$ as in the proof of Proposition \ref{Vertex}.
Thus we set $u_{v_0}=x\not=0$ and $u_v=0$ for all $v\in V(G_{k+1})\backslash\{v_0\}$.
The restriction of $u$ to $(G_{k},p)$ is  non-trivial which is a contradiction.
We conclude that $(G,p)$ is infinitesimally rigid.
\end{proof}

A bar-joint framework is {\em sequentially infinitesimally rigid} if it contains a 
vertex-complete tower of bar-joint frameworks $\{(G_k,p):k\in\bN\}$
such that $(G_k,p)$ is infinitesimally rigid for each $k\in\bN$.
It is shown in \cite{kit-pow-2} that infinitesimal rigidity and sequential infinitesimal rigidity are equivalent properties for all generic bar-joint frameworks in Euclidean space $\bR^2$, and more generally in 
$(\bR^2,\|\cdot\|_p)$ for all $\ell^p$ norms with $p\in(1,\infty)$.
The following example shows that sequential infinitesimal rigidity is in general not equivalent to infinitesimal rigidity for countable bar-joint frameworks  in $(\mathbb{R}^2,\|\cdot\|_\P)$.

\begin{example}
Let $(G,p)$ be the well-positioned countable bar-joint framework in $(\mathbb{R}^2,\|\cdot\|_\infty)$ which is illustrated in Figure \ref{Framework1}.
The monochrome subgraphs $G_{F_1}$ and $G_{F_2}$ indicated in black and gray respectively
are both spanning trees in $G$ and so $(G,p)$ is minimally infinitesimally rigid by Corollary \ref{MinRigCor}. A vertex-complete tower of relatively infinitesimally rigid subframeworks in $(G,p)$ is evident by letting $G_k$ be  the subgraph of $G$ induced by the vertices $v_0,v_1,\ldots,v_{k+1}$ and applying Proposition \ref{RelRigid} (cf. Proposition \ref{Towers}). If $(H,p)$ is a finite subframework of $(G,p)$ with $V(H)=\{v_{j_1},v_{j_2},\ldots,v_{j_k}:j_1<j_2<\cdots<j_k\}$ then $|\Phi(v_{j_k})|<2$ and so $(H,p)$ is infinitesimally flexible by Proposition \ref{Vertex}. 
In particular, there does not exist a vertex-complete tower of infinitesimally rigid finite subframeworks and so $(G,p)$ is not sequentially infinitesimally rigid. Note that $(G,p)$ is uniformly well-positioned and the graph is locally finite. Hence by Proposition \ref{Equicontinuous}, the  equicontinuous finite flexes of $(G,p)$ are necessarily trivial.
\end{example}

 \begin{figure}[ht]
\centering
  \begin{tabular}{  c   }
  
 \begin{minipage}{.45\textwidth}
\begin{tikzpicture}[scale=0.9, axis/.style={very thin,dashed, ->, >=stealth'},
    important line/.style={very thick},
    dashed line/.style={dashed,  thick},
    pile/.style={thick, ->, >=stealth', shorten <=2pt, shorten
    >=2pt},
    every node/.style={color=black}]
 
  \clip (-3.5,-1.5) rectangle (2cm,1.5cm); 
  
  \coordinate (A1) at (-1,1);
  \coordinate (A2) at (0,0);
  \coordinate (A3) at (1,-1);
  \coordinate (A4) at (1,1);
  \coordinate (A5) at (-1,-1); 

  \draw[lightgray] (A3) -- (A4) -- (A1) -- (A5) -- cycle;
    
  \draw[axis] (-1.7,0)  -- (1.8,0) node(xline)[right]
        {$ $};
  \draw[axis] (0,-1.3) -- (0,1.5) node(yline)[above] {$ $};

  \draw[dashed] (0,-1.3) -- (0,1.5);

  \node[below left] at (-1,0)  {\tiny $-1$};
  \node[below right] at (1,0)  {\tiny $1$};	

  \node[above left] at (0,1)  {\tiny $1$};	
  \node[below left] at (0,-1)  {\tiny $-1$};	

  \node[above right] at (0.2,1)  {\tiny $F_2$};	
  \node[above right] at (1,0.1)  {\tiny $F_1$};	
\end{tikzpicture}
\end{minipage}

    \begin{minipage}{.45\textwidth}
  \begin{tikzpicture}[scale=0.3]
 
    \clip (-4.1,-5) rectangle (8.6cm,7.2cm); 
  
  \coordinate (A1) at (7,6);
  \coordinate (A2) at (7,5);
  \coordinate (A3) at (7,3);
  \coordinate (A4) at (7,-3);
  \coordinate (A5) at (6,5);
  \coordinate (A6) at (4,3);
  \coordinate (A7) at (-2,-3);

 \draw[thick, lightgray] (A1) -- (A2) -- (A3) -- (A4);	
 \draw[thick]  (A1) -- (A5) -- (A6) -- (A7);	
 \draw[thick]  (A2) -- (A5);	
 \draw[thick]  (A3) -- (A6);	
 \draw[thick]  (A4) -- (A7);	
 \draw[thick, lightgray] (A3) -- (A5);	
 \draw[thick, lightgray] (A6) -- (A4);

  \node[draw,circle,inner sep=1.1pt,fill] at (A1) {};
  \node[draw,circle,inner sep=1.1pt,fill] at (A2) {};
  \node[draw,circle,inner sep=1.1pt,fill] at (A3) {};
 \node[draw,circle,inner sep=1.1pt,fill] at (A4) {};
  \node[draw,circle,inner sep=1.1pt,fill] at (A5) {};
  \node[draw,circle,inner sep=1.1pt,fill] at (A6) {};
  \node[draw,circle,inner sep=1.1pt,fill] at (A7) {};
   \node[below] at (7,-2.4) {$\vdots$};
  \node[below left] at (-2.3,-2.4) {$\iddots$};
 
  \node[right] at (7,6.3)  {\small $v_0$};
  \node[right] at (A2)  {\small $v_1$};	
  \node[right] at (A3)  {\small $v_3$};
  \node[right] at (A4)  {\small $v_5$};	
   \node[left] at (A5)  {\small $v_2$};	
  \node[left] at (A6)  {\small $v_4$};
  \node[left] at (A7)  {\small $v_6$};	
  
  \end{tikzpicture}
\end{minipage}
\end{tabular}

\caption{A countable bar-joint framework which is infinitesimally rigid in $(\mathbb{R}^2,\|\cdot\|_\infty)$ but not sequentially infinitesimally rigid.}
\label{Framework1}
 \end{figure}
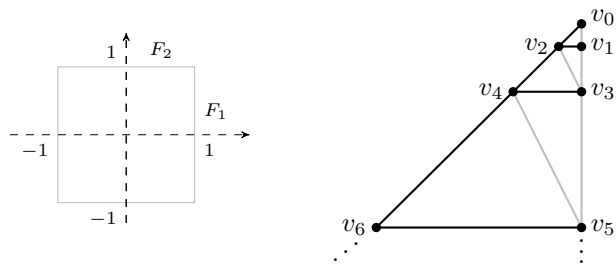










\bibliographystyle{abbrv}
\def\lfhook#1{\setbox0=\hbox{#1}{\ooalign{\hidewidth
  \lower1.5ex\hbox{'}\hidewidth\crcr\unhbox0}}}

\end{document}